\newtheorem{theorem}{Theorem}[section]
\newtheorem{lemma}[theorem]{Lemma}
\newtheorem{corollary}[theorem]{Corollary}
\newtheorem{proposition}[theorem]{Proposition}
\theoremstyle{definition}
\newtheorem{definition}[theorem]{Definition}
\theoremstyle{remark}
\numberwithin{equation}{section}
\newcommand{\spt}[1]{\mbox{\normalfont spt}\Parans{#1}}
\newcommand{\sptBar}[2]{\overline{\mbox{\normalfont spt}}_{#1}\Parans{#2}}
\newcommand{\Mspt}[1]{\mbox{\normalfont M2spt}\Parans{#1}}
\newcommand{\sptA}[2]{\mbox{\normalfont spt}_{#1}\Parans{#2}}
\newcommand{\Parans}[1]{\left(#1\right)}
\newcommand{\SBrackets}[1]{\left[#1\right]}
\newcommand{\PieceTwo}[4]
{
	\left\{
   	\begin{array}{ll}
      	#1 & #3 \\
       	#2 & #4
     	\end{array}
	\right.
}
\newcommand{\aqprod}[3]{\Parans{#1;#2}_{#3}}
\newcommand{\jacprod}[2]{\SBrackets{#1;#2}_{\infty}}
\newcommand{\Jac}[2]{\left(\frac{#1}{#2}\right)}
\newcommand{\STwoB}{\mbox{\rm S2}}
\newcommand{\SB}{\overline{\mbox{\rm S}}}
\begin{document}

%\begin{frontmatter}

\author{FRANK G. GARVAN*}
\thanks{*The first author was supported in part by a grant from the
Simon's Foundation (\#318714)}
\address{Department of Mathematics, University of Florida\\
Gainesville, Florida 32611, USA
\endgraf  fgarvan@ufl.edu}

\author{CHRIS JENNINGS-SHAFFER}
\address{Department of Mathematics, Oregon State University,
Corvallis, Oregon 97331, USA
\endgraf jennichr@math.oregonstate.edu}

\title{Exotic Bailey-Slater SPT-Functions II: Hecke-Rogers-Type Double Sums and 
Bailey Pairs From Groups A, C, E}

\keywords{Number theory; Andrews' spt-function; congruences; partitions; smallest parts function;
Bailey pairs, Bailey's lemma, Hecke-Rogers series}

\subjclass[2010]{Primary 11P83, 11P82, 33D15}

\begin{abstract}
We investigate spt-crank-type functions arising from Bailey pairs.
We recall four spt-type functions corresponding to the Bailey pairs
$A1$, $A3$, $A5$, and $A7$ of Slater and given four new spt-type functions
corresponding to Bailey pairs
$C1$, $C5$, $E2$, and $E4$.
Each of these functions can be thought of as a count on the number
of appearances of the smallest part in certain integer partitions.
We prove simple Ramanujan type congruences for these functions 
that are explained by a spt-crank-type function. 
The spt-crank-type functions are two variable $q$-series determined
by a Bailey pair, that when $z=1$ reduce to the spt-type functions.
We find the spt-crank-type functions to have interesting representations
as either infinite products or as Hecke-Rogers-type double series. 
These series reduce nicely when $z$ is a certain 
root of unity
and allow us to deduce the congruences. 
Additionally we find dissections when $z$ is a certain root of unity to
give another proof of the congruences. 
Our double sum and product formulas require Bailey's Lemma and conjugate Bailey pairs.
Our dissection formulas follow from Bailey's Lemma and dissections of known ranks and cranks. 
\end{abstract}
%\end{frontmatter}
\allowdisplaybreaks

\maketitle

\section{Introduction}
%%%%%%%%%%%%%%%%%%%%%%%%%%%%%%%%%%%%%%%%%%%%%%%%%%%%%%%%%%%%%%%%%%%%%%
%%%%%%%%%%%%%%%%%%%%%%%%%%%%%%%%%%%%%%%%%%%%%%%%%%%%%%%%%%%%%%%%%%%%%%
%%%%%%%%%%%%%%%%%%%%%%%%%%%%%%%%%%%%%%%%%%%%%%%%%%%%%%%%%%%%%%%%%%%%%%

A partition of $n$ is a non-increasing sequence of positive integers that sum to
$n$. For example, the partitions of $4$ are $4$, $3+1$, $2+2$, $2+1+1$ and
$1+1+1+1$. We have a 
weighted count on partitions given by counting a partition
by the number of times the smallest part appears. We let $\spt{n}$ denote this
weighted count of the partitions of $n$. From the partitions of $4$ we see that
$\spt{4}=10$. Andrews introduced the spt function in \cite{Andrews} and there
proved that $\spt{5n+4}\equiv 0\pmod{5}$, $\spt{7n+5}\equiv 0\pmod{7}$, and
$\spt{13n+6}\equiv 0\pmod{13}$.

In this article we use the standard product notation,
\begin{align*}
	&\aqprod{z}{q}{n} = \prod_{j=0}^{n-1} (1-zq^j),
	\hspace{20pt}
	\aqprod{z}{q}{\infty} = \prod_{j=0}^\infty (1-zq^j),
	\hspace{20pt}
	\jacprod{z}{q} = \aqprod{z,q/z}{q}{\infty},	
	\\
	&\aqprod{z_1,\dots,z_k}{q}{n} = \aqprod{z_1}{q}{n}\dots\aqprod{z_k}{q}{n},
	\hspace{20pt}
	\aqprod{z_1,\dots,z_k}{q}{\infty} = \aqprod{z_1}{q}{\infty}\dots\aqprod{z_k}{q}{\infty},
	\\
	&\jacprod{z_1,\dots,z_k}{q} = \jacprod{z_1}{q}\dots\jacprod{z_k}{q}
.
\end{align*}
Noting 
\begin{align*}
	\frac{q^n}{(1-q^n)^2} 
	&= 
	q^n + 2q^{2n}+ 3q^{3n}+ 4q^{4n}\dots 
	,
\end{align*}
we see a generating function for $\spt{n}$ is
given by
\begin{align*}
	S(q) 
	&= 
	\sum_{n=1}^\infty \spt{n}q^n
	=\sum_{n=1}^\infty \frac{q^{n}}{(1-q^n)^2\aqprod{q^{n+1}}{q}{\infty}}
.
\end{align*}

In \cite{AGL} Andrews, the first author, and Liang 
defined a two variable generalization of the spt function by
\begin{align*}
	S(z,q) 
	&=
	\sum_{n=1}^\infty\sum_{m=-\infty}^\infty N_S(m,n)z^mq^n
	= 
	\sum_{n=1}^\infty \frac{q^{n}\aqprod{q^{n+1}}{q}{\infty}}
		{\aqprod{zq^n,z^{-1}q^n}{q}{\infty}}
	= 
	\frac{\aqprod{q}{q}{\infty}}{\aqprod{z,z^{-1}}{q}{\infty}}
	\sum_{n=1}^\infty \frac{\aqprod{z,z^{-1}}{q}{n}q^{n}}
		{\aqprod{q}{q}{n}}
,
\end{align*}
so that $S(1,q)=S(q)$. There they reproved the congruences 
$\spt{5n+4}\equiv 0\pmod{5}$ and $\spt{7n+5}\equiv 0\pmod{7}$ by examining
$S(\zeta_5,q)$ and $S(\zeta_7,q)$, where $\zeta_5$ is a primitive fifth root
of unity and $\zeta_7$ is a primitive seventh root of unity. 
The idea of working with a two variable generalization of a generating
function to prove congruences began with Dyson's
rank of a partition \cite{AS, Dyson}
and continued with the Andrews-Garvan crank of a partition
\cite{AndrewsGarvan, Garvan1}. By introducing
an extra variable, we obtain a statistic and refinement of
the original counting function. An illuminating account of ranks
and cranks can be found in Chapters 2, 3, and 4 of \cite{AndrewsBerndt2}.
Essential to the study of $S(z,q)$ is the identity
\begin{align}\label{EqIntro1}
	(1-z)(1-z^{-1})S(z,q) &= R(z,q) - C(z,q)
,
\end{align}
where $R(z,q)$ is the generating function of the rank of a partition and
$C(z,q)$ is the generating function of the crank of a partition. 
To prove (\ref{EqIntro1}) one applies a limiting case
of Bailey's Lemma to a certain Bailey pair.

In \cite{GarvanJennings} the authors used Bailey's Lemma on four different 
Bailey pairs to obtain new spt-crank functions and prove congruences for three spt functions for 
overpartitions and the spt function for partitions with smallest part even and 
without repeated odd parts. An overpartition is a partition in which the first 
occurrence of a part may be overlined, for example the overpartitions of $3$
are $3$, $\overline{3}$, $2+1$, $2+\overline{1}$, $\overline{2}+1$, 
$\overline{2}+\overline{1}$, $1+1+1$, and $\overline{1}+1+1$. We let
$\sptBar{}{n}$ denote the total number of appearances of the smallest parts among
the overpartitions of $n$ whose smallest part is not overlined, for example
$\sptBar{}{3} = 6$. The other two spt functions for overpartitions are given
by additionally requiring the smallest part to be odd or requiring the smallest part
to be even. We let $\Mspt{n}$ denote the number of smallest parts in the partitions
of $n$ without repeated odd parts and smallest part even. For example the relevant
partitions of $6$ are $6$, $4+2$, and $2+2+2$ and so $\Mspt{n}=5$.
In all cases we found the spt-cranks, the two variable 
generalizations of the spt functions, to be the difference of a known rank
function and some sort of infinite product crank function. Furthermore this fact came directly from a 
Bailey pair. In particular, the generating functions for the respective two 
variable generalizations for $\sptBar{}{n}$ and $\Mspt{n}$ are
\begin{align*}
	\SB(z,q)
	&=
		\sum_{n=1}^\infty \frac{q^n\aqprod{-q^{n+1},q^{n+1}}{q}\infty}
			{\aqprod{zq^n,z^{-1}q^n}{q}{\infty}}
		=
		\frac{\aqprod{-q,q}{q}{\infty}}{\aqprod{z,z^{-1}}{q}{\infty}}
		\sum_{n=1}^\infty \frac{\aqprod{z,z^{-1}}{q}{n}q^n}
			{\aqprod{-q,q}{q}{n}}
	,\\
	\STwoB(z,q)
	&=
		\sum_{n=1}^\infty \frac{q^{2n}\aqprod{-q^{2n+1},q^{2n+2}}{q^2}\infty}
			{\aqprod{zq^{2n},z^{-1}q^{2n}}{q^2}{\infty}}
		=
		\frac{\aqprod{-q,q^2}{q^2}{\infty}}{\aqprod{z,z^{-1}}{q^2}{\infty}}
		\sum_{n=1}^\infty \frac{\aqprod{z,z^{-1}}{q^2}{n}q^{2n}}
			{\aqprod{-q,q^2}{q^2}{n}}
	,\\
	\SB(1,q) &= \sum_{n=1}^\infty \sptBar{}{n}q^n
	,\\
	\STwoB(1,q) &= \sum_{n=1}^\infty \Mspt{n}q^n.
\end{align*}
We let $\overline{R}(z,q)$ denote the generating function for the Dyson rank of 
an overpartition \cite{Lovejoy1},
 and let $R2(z,q)$ denote the $M_2$-rank of a partition without 
repeated odd parts \cite{BG2}. The Dyson rank of an overpartition is the largest part minus
the number of parts. The $M_2$-rank of a partition without repeated odd parts is
the ceiling of half the largest part minus the number of parts. We found that
\begin{align*}
	(1-z)(1-z^{-1})\SB(z,q)
	&= 
		\overline{R}(z,q) - \aqprod{-q}{q}{\infty}C(z,q)
	,\\
	(1-z)(1-z^{-1})\STwoB(z,q)
	&= 
		R2(z,q) - \aqprod{-q}{q^2}{\infty}C(z,q^2)
.
\end{align*}
Both of these identities were deduced from the same limiting case of Bailey's 
Lemma as (\ref{EqIntro1}), but applied to a different Bailey pair.

With this in mind we now consider a 
spt-crank-type function to be a function of the form
\begin{align*}
	\frac{P(q)}{\aqprod{z,z^{-1}}{q}{\infty}}\sum_{n=1}^\infty \aqprod{z,z^{-1}}{q}{n}q^n\beta_n
,
\end{align*}
where $P(q)$ is an infinite product and $\beta$ comes from a Bailey pair 
relative to $(1,q)$. We consider a spt-type function to be the $z=1$ case of a
spt-crank-type function.
We recall that a pair of sequences $(\alpha,\beta)$ is a Bailey pair relative
to $(a,q)$ if
\begin{align*}
	\beta_n = \sum_{k=0}^n \frac{\alpha_k}{\aqprod{q}{q}{n-k}\aqprod{aq}{q}{n+k}}
.
\end{align*}

The authors are in the process of studying all such spt-type functions that arise 
from the Bailey pairs of Slater from \cite{Slater} and \cite{Slater2}. 
In \cite{JS} the second 
author introduced the spt functions corresponding
to the pairs $A(1)$, $A(3)$, $A(5)$, and $A(7)$
and proved congruences for these functions by dissecting the spt-crank-type functions
when $z$ is a certain root of unity. In this paper we again consider the Bailey
pairs from group A as well as the Bailey pairs $C(1)$, $C(5)$, $E(2)$, and
$E(4)$. These Bailey pairs were selected because they lead to spt-type functions
with simple congruences and the spt-crank-type functions satisfy a variety of identities.
In a coming paper we handle Bailey pairs from groups $B$, $F$, $G$, and
$J$.

Our study of these spt-type functions can be described as follows.
We take a Bailey pair $(\alpha^X,\beta^X)$,
where each $\beta^X_n$ has integer coefficients as a series in $q$,
and define the corresponding two
variable spt-crank-type function by
\begin{align*}
	S_X(z,q) &= 
	\frac{P_X(q)}{\aqprod{z,z^{-1}}{q}{\infty}}
	\sum_{n=1}^\infty \aqprod{z,z^{-1}}{q}{n}\beta^X_nq^n	
	=
	\sum_{n=1}^\infty\sum_{m=-\infty}^\infty M_X(m,n)z^mq^n
.
\end{align*}
Here $P_X(q)$ is an infinite product of our choice.
By setting $z=1$ we get the spt-type function
\begin{align*}
	S_X(q) 
	&=
	S_X(1,q)
 	=
	\sum_{n=1}^\infty\left(\sum_{m=-\infty}^\infty M_X(m,n)\right)q^n
	=
	\sum_{n=1}^\infty \sptA{X}{n}q^n
.
\end{align*}
We prove simple linear congruences for $\sptA{X}{n}$ by considering
$S_{X}(\zeta,q)$ where $\zeta$ is a primitive root of unity.
For $t$ a positive integer we define
\begin{align*}
	M_X(k,t,n) &= \sum_{m\equiv k \pmod{t}}M_X(m,n)
.
\end{align*}
We note that
\begin{align*}
	\sptA{X}{n} &= \sum_{k=0}^{t-1} M_X(k,t,n),
\end{align*}
and when $\zeta_t$ is a $t^{th}$ root of unity, we have
\begin{align*}
	S_X(\zeta_t,q) 
	&=
	\sum_{n=1}^\infty \left(\sum_{k=0}^{t-1}M_X(k,t,n)\zeta_t^k\right)q^n
.
\end{align*}
The last equation is of great importance because if $t$ is prime and 
$\zeta_t$ is a primitive $t^{th}$ root of unity, then the minimal polynomial
for $\zeta_t$ is $1+x+x^2+\dots+x^{t-1}$. Thus if the coefficient of $q^N$ in
$S_X(\zeta_t,q)$ is zero, then $\sum_{k=0}^{t-1}M_X(k,t,N)\zeta_t^k$ is zero 
and so $M_X(0,t,N)=M_X(1,t,N)=\dots=M_X(t-1,t,N)$. But then we would have that
$\sptA{X}{N}=t\cdot M_X(0,t,N)$ and so clearly $\sptA{X}{N}\equiv 0\pmod{t}$.
That is to say, if the coefficient of $q^N$ in $S_X(\zeta_t,q)$ is zero, then
$\sptA{X}{N}\equiv 0 \pmod{t}$. Thus not only do we have the congruence
$\sptA{X}{N}\equiv 0 \pmod{t}$, but also the stronger combinatorial result that
all of the $M_{X}(r,t,N)$ are equal.

We return to this idea after defining our
new spt-type functions and listing their congruences in the next section.
There we introduce new spt-crank-type functions corresponding to the Bailey pairs
$C(1)$, $C(5)$, $E(2)$, and $E(4)$ of \cite{Slater} as well as revisit the Bailey pairs
$A(1)$, $A(3)$, $A(5)$, $A(7)$. Not only do we prove congruences for these
functions by dissection formulas for $S_{X}(\zeta,q)$, but we find single series and product identities
for $S_{A5}(z,q)$, $S_{A7}(z,q)$, $S_{C5}(z,q)$, and $S_{E2}(z,q)$, and find interesting 
Hecke-Rogers-type double sum formulas
for $S_{A1}(z,q)$, $S_{A3}(z,q)$, $S_{C1}(z,q)$, and $S_{E4}(z,q)$. These
series identities can be used to prove
most, but not all, of the congruences whereas the dissections prove all of them. 

In \cite{Garvan3} the first author gave Hecke-Rogers-type double series for the original
spt-crank functions.
In particular we have,
\begin{align*}
	&\aqprod{z,z^{-1},q}{q}{\infty}S(z,q)
	=
	\sum_{n=0}^\infty\sum_{m=0}^n
		(1-z^{\frac{n-m}{2}})^2z^{\frac{m-n}{2}}
		\Jac{-4}{n}\Jac{12}{m} q^{\frac{1}{12}\left(\frac{3n^2-m^2}{2}-1\right)}
	,\\
	&(1+z)\aqprod{z,z^{-1},q}{q}{\infty}\SB(z,q)
	\\
	&=	
	\sum_{n=0}^\infty\sum_{m=-[n/2]}^{[n/2]}
		(-1)^{m+n}(1-z^{n-2|m|+1})(1-z^{n-2|m|})z^{2|m|-n}q^{\frac{n^2-2m^2}{2} + \frac{n}{2}}
	,\\
	&\aqprod{z,z^{-1},q^2}{q^2}{\infty}\STwoB(z,-q)
	=
		\sum_{n=0}^\infty\sum_{m=0}^n
		(-1)^n(1-z^{n-m})^2z^{m-n}q^{\frac{2n^2-m^2}{2} + \frac{2n-m}{2}}
.
\end{align*}
Here $\Jac{\cdot}{\cdot}$ is the Kronecker symbol.
Such identities are interesting not only because of their use in proving 
congruences for smallest parts functions, but also because many mock theta
functions are special cases of the ranks related to the spt-crank functions.

In Section 2 we give the preliminaries and state our main results which are 
congruences for the spt-type functions
and identities for the spt-crank-type functions including 
single series and Hecke-Rogers-type 
double series identities, product identities, 
and dissection identities. In 
Section 3 we use the machinery of Bailey pairs to prove the series and product 
identities. In Section 4 we evaluate the spt-crank-type functions at roots 
of unity to obtain identities for the $M_X(r,t,n)$ and prove the congruences. In 
Section 5 we relate our spt-crank-type functions to known rank and crank 
functions and derive the dissection identities. 
In Section 6 we finish with a few concluding remarks.
We summarize the results of this article in the following table:
\begin{center}
\begin{tabular}{|c|c|c|c|c|}
	\hline
	Bailey pair 	& linear congruence 	& Hecke-Rogers 		& product  		& dissection  
	\\
	$X$ 			&  mod $p$ for			& identity for  	& identity for 	& identity for 
	\\
					&	$S_X(1,q)$			& $S_X(z,q)$		& $S_X(z,q)$	& $S_X(\zeta_p,q)$
	\\
	\hline
	$A1$ 			& $p=3$ 				& Yes 				& No 			& In \cite{JS}
	\\
	$A3$ 			& $p=3,5$ 				& Yes 				& No 			& In \cite{JS}
	\\
	$A5$ 			& $p=5,7$				& No			 	& Yes 			& In \cite{JS}
	\\
	$A7$ 			& $p=5$					& No 				& Yes 			& In \cite{JS}
	\\		
	$C1$ 			& $p=5$					& Yes 				& No 			& Yes
	\\
	$C5$ 			& $p=5$					& No 				& Yes 			& Yes
	\\
	$E2$ 			& $p=3$					& No 				& Yes 			& Yes
	\\
	$E4$ 			& $p=3$					& Yes 				& No 			& Yes
	\\
	\hline
\end{tabular}
\end{center}

\section{Preliminaries and Statement of Results}

To begin we take the following Bailey pairs relative to $(1,q)$
from \cite{Slater}:
\begin{align*}
	\beta^{A1}_n &= \frac{1}{\aqprod{q}{q}{2n}}
	,&
	\alpha^{A1}_n &=
	\left\{\begin{array}{ll}
		1 & \mbox{ if } n=0
		\\
		 q^{6k^2-k} + q^{6k^2+k}& \mbox{ if } n = 3k
		\\
		-q^{6k^2+ 5k + 1}& \mbox{ if } n = 3k+1
		\\
		-q^{6k^2 - 5k + 1}& \mbox{ if } n = 3k-1
	\end{array}\right.,
	\\
	\beta^{A3}_n &= \frac{q^n}{\aqprod{q}{q}{2n}}
	,&
	\alpha^{A3}_n &=
	\left\{\begin{array}{ll}
		1 & \mbox{ if } n=0
		\\
		 q^{6k^2-2k} + q^{6k^2+2k}& \mbox{ if } n = 3k
		\\
		-q^{6k^2+ 2k}& \mbox{ if } n = 3k+1
		\\
		-q^{6k^2 - 2k}& \mbox{ if } n = 3k-1
	\end{array}\right.,
	\\
	\beta^{A5}_n &= \frac{q^{n^2}}{\aqprod{q}{q}{2n}}
	,&
	\alpha^{A5}_n &=
	\left\{\begin{array}{ll}
		1 & \mbox{ if } n=0
		\\
		 q^{3k^2-k} + q^{3k^2+k}& \mbox{ if } n = 3k
		\\
		-q^{3k^2+ k}& \mbox{ if } n = 3k+1
		\\
		-q^{3k^2 - k}& \mbox{ if } n = 3k-1
	\end{array}\right.,
	\\
	\beta^{A7}_n &= \frac{q^{n^2-n}}{\aqprod{q}{q}{2n}}
	,&
	\alpha^{A7}_n &=
	\left\{\begin{array}{ll}
		1 & \mbox{ if } n=0
		\\
		 q^{3k^2-2k} + q^{3k^2+2k}& \mbox{ if } n = 3k
		\\
		-q^{3k^2+ 4k+1}& \mbox{ if } n = 3k+1
		\\
		-q^{3k^2 - 4k+1}& \mbox{ if } n = 3k-1
	\end{array}\right.,
	\\
	\beta^{C1}_n &= \frac{1}{\aqprod{q}{q^2}{n}\aqprod{q}{q}{n}}
	,&
	\alpha^{C1}_n &= 
	\left\{\begin{array}{ll}
		1 & \mbox{ if } n=0
		\\
		(-1)^kq^{3k^2-k}(1+q^{2k}) & \mbox{ if } n = 2k
		\\
		0 & \mbox{ if } n = 2k+1
	\end{array}\right.,
	\\
	\beta^{C5}_n &= \frac{q^{(n^2-n)/2}}{\aqprod{q}{q^2}{n}\aqprod{q}{q}{n}}
	,&
	\alpha^{C5}_n &= 
	\left\{\begin{array}{ll}
		1 & \mbox{ if } n=0
		\\
		(-1)^kq^{k^2-k}(1+q^{2k}) & \mbox{ if } n = 2k
		\\
		0 & \mbox{ if } n = 2k+1
	\end{array}\right.,
	\\
	\beta^{E2}_n &= \frac{(-1)^n}{\aqprod{q^2}{q^2}{n}}
	,&
	\alpha^{E2}_n &= 
	\left\{\begin{array}{ll}
		1 & \mbox{ if } n=0
		\\
		2(-1)^n & \mbox{ if } n \ge 1
	\end{array}\right.,
	\\
	\beta^{E4}_n &= \frac{q^n}{\aqprod{q^2}{q^2}{n}}
	,&
	\alpha^{E4}_n &= 
	\left\{\begin{array}{ll}
		1 & \mbox{ if } n=0
		\\
		(-1)^n q^{n^2-n}(1+q^{2n}) & \mbox{ if } n \ge 1
	\end{array}\right.
.
\end{align*}
For each Bailey pair we define a two variable
spt-crank-type series as follows,
\begin{align}
	S_{A1}(z,q) 
	&=
		\frac{\aqprod{q}{q}{\infty}}{\aqprod{z,z^{-1}}{q}{\infty}}		
		\sum_{n=1}^\infty \frac{q^n\aqprod{z,z^{-1}}{q}{n}}{\aqprod{q}{q}{2n}}
		=
		\sum_{n=1}^\infty \frac{q^{n}\aqprod{q^{2n+1}}{q}{\infty}}
			{\aqprod{zq^n,z^{-1}q^n}{q}{\infty}}
	,\\
	S_{A3}(z,q) 
	&=
		\frac{\aqprod{q}{q}{\infty}}{\aqprod{z,z^{-1}}{q}{\infty}}		
		\sum_{n=1}^\infty \frac{q^{2n}\aqprod{z,z^{-1}}{q}{n}}{\aqprod{q}{q}{2n}}
		=
		\sum_{n=1}^\infty \frac{q^{2n}\aqprod{q^{2n+1}}{q}{\infty}}
			{\aqprod{zq^n,z^{-1}q^n}{q}{\infty}}
	,\\
	S_{A5}(z,q) 
	&=
		\frac{\aqprod{q}{q}{\infty}}{\aqprod{z,z^{-1}}{q}{\infty}}		
		\sum_{n=1}^\infty \frac{q^{n^2+n}\aqprod{z,z^{-1}}{q}{n}}{\aqprod{q}{q}{2n}}
		=
		\sum_{n=1}^\infty \frac{q^{n^2+n}\aqprod{q^{2n+1}}{q}{\infty}}
			{\aqprod{zq^n,z^{-1}q^n}{q}{\infty}}
	,\\
	S_{A7}(z,q) 
	&=
		\frac{\aqprod{q}{q}{\infty}}{\aqprod{z,z^{-1}}{q}{\infty}}		
		\sum_{n=1}^\infty \frac{q^{n^2}\aqprod{z,z^{-1}}{q}{n}}{\aqprod{q}{q}{2n}}
		=
		\sum_{n=1}^\infty \frac{q^{n^2}\aqprod{q^{2n+1}}{q}{\infty}}
			{\aqprod{zq^n,z^{-1}q^n}{q}{\infty}}
	,\\
	S_{C1}(z,q) 
	&=
		\frac{\aqprod{q}{q^2}{\infty}\aqprod{q}{q}{\infty}}{\aqprod{z,z^{-1}}{q}{\infty}}		
		\sum_{n=1}^\infty \frac{q^n\aqprod{z,z^{-1}}{q}{n}}{\aqprod{q}{q^2}{n}\aqprod{q}{q}{n}}
		=
		\sum_{n=1}^\infty \frac{q^{n}\aqprod{q^{2n+1}}{q^2}{\infty}\aqprod{q^{n+1}}{q}{\infty}}
			{\aqprod{zq^n,z^{-1}q^n}{q}{\infty}}
	,\\
	S_{C5}(z,q) 
	&=
		\frac{\aqprod{q}{q^2}{\infty}\aqprod{q}{q}{\infty}}{\aqprod{z,z^{-1}}{q}{\infty}}		
		\sum_{n=1}^\infty \frac{q^{\frac{n^2+n}{2}}\aqprod{z,z^{-1}}{q}{n}}{\aqprod{q}{q^2}{n}\aqprod{q}{q}{n}}
		%\nonumber\\
		=
		\sum_{n=1}^\infty \frac{q^{\frac{n^2+n}{2}}\aqprod{q^{2n+1}}{q^2}{\infty}\aqprod{q^{n+1}}{q}{\infty}}
			{\aqprod{zq^n,z^{-1}q^n}{q}{\infty}}
	,\\
	S_{E2}(z,q) 
	&=
		\frac{\aqprod{q^2}{q^2}{\infty}}{\aqprod{z,z^{-1}}{q}{\infty}}		
		\sum_{n=1}^\infty \frac{(-1)^nq^n\aqprod{z,z^{-1}}{q}{n}}{\aqprod{q^2}{q^2}{n}}
		=	
		\sum_{n=1}^\infty \frac{(-1)^nq^{n}\aqprod{q^{2n+2}}{q^2}{\infty}}
			{\aqprod{zq^n,z^{-1}q^n}{q}{\infty}}
	,\\
	S_{E4}(z,q) 
	&=
		\frac{\aqprod{q^2}{q^2}{\infty}}{\aqprod{z,z^{-1}}{q}{\infty}}		
		\sum_{n=1}^\infty \frac{q^{2n}\aqprod{z,z^{-1}}{q}{n}}{\aqprod{q^2}{q^2}{n}}
		=	
		\sum_{n=1}^\infty \frac{q^{2n}\aqprod{q^{2n+2}}{q^2}{\infty}}
			{\aqprod{zq^n,z^{-1}q^n}{q}{\infty}}
.
\end{align}
Setting $z=1$ gives the following spt-type functions
\begin{align*}
	S_{A1} (q)
	&=
	\sum_{n=1}^\infty \sptA{A1}{n}q^n
	=
	\sum_{n=1}^\infty \frac{q^n}
	{(1-q^n)^2\aqprod{q^{n+1}}{q}{\infty}\aqprod{q^{n+1}}{q}{n}}
	,\\			
	S_{A3} (q)
	&=
	\sum_{n=1}^\infty \sptA{A3}{n}q^n
	=
	\sum_{n=1}^\infty \frac{q^{2n}}
	{(1-q^n)^2\aqprod{q^{n+1}}{q}{\infty}\aqprod{q^{n+1}}{q}{n}}
	,\\
	S_{A5} (q)
	&=
	\sum_{n=1}^\infty \sptA{A5}{n}q^n
	=
	\sum_{n=1}^\infty \frac{q^{n^2+n}}
	{(1-q^n)^2\aqprod{q^{n+1}}{q}{\infty}\aqprod{q^{n+1}}{q}{n}}
	,\\
	S_{A7} (q)
	&=
	\sum_{n=1}^\infty \sptA{A7}{n}q^n
	=
	\sum_{n=1}^\infty \frac{q^{n^2}}
	{(1-q^n)^2\aqprod{q^{n+1}}{q}{\infty}\aqprod{q^{n+1}}{q}{n}}
	,\\
	S_{C1} (q)
	&=
	\sum_{n=1}^\infty \sptA{C1}{n}q^n
	=
	\sum_{n=1}^\infty \frac{q^{n}}
	{(1-q^n)^2\aqprod{q^{n+1}}{q}{n}\aqprod{q^{2n+2}}{q^2}{\infty}}
	,\\
	S_{C5} (q)
	&=
	\sum_{n=1}^\infty \sptA{C5}{n}q^n
	=
	\sum_{n=1}^\infty \frac{q^{(n^2+n)/2}}
	{(1-q^n)^2\aqprod{q^{n+1}}{q}{n}\aqprod{q^{2n+2}}{q^2}{\infty}}
	,\\
	S_{E2} (q)
	&=
	\sum_{n=1}^\infty \sptA{E2}{n}q^n
	=
	\sum_{n=1}^\infty \frac{(-1)^nq^{n}\aqprod{-q^{n+1}}{q}{\infty}}
	{(1-q^n)^2\aqprod{q^{n+1}}{q}{\infty}}
	,\\
	S_{E4} (q)
	&=
	\sum_{n=1}^\infty \sptA{E4}{n}q^n
	=
	\sum_{n=1}^\infty \frac{q^{2n}\aqprod{-q^{n+1}}{q}{\infty}}
	{(1-q^n)^2\aqprod{q^{n+1}}{q}{\infty}}
.
\end{align*}
For group A, the interpretation is a bit more natural in terms of partition 
pairs rather than a smallest parts function, however we give the smallest parts
interpretation for each of these generating functions. 

For a partition $\pi$ we let $s(\pi)$ denote the smallest part, $\ell(\pi)$ the 
largest part, and $|\pi|$ the sum of parts. We say a pair of partitions 
$(\pi_1,\pi_2)$ is a partition pair of $n$ if $|\pi_1|+|\pi_2|=n$.
We let $PP$ denote the set of all partition pairs $(\pi_1,\pi_2)$
such that $\pi_1$ is non-empty and all parts of $\pi_2$ are larger than
$s(\pi_1)$ but no more than $2s(\pi_1)$. We let
$\widetilde{P}$ denote the set of all partitions $\pi$ such that
all odd parts of $\pi$ are less than $2s(\pi)$.
We let $\overline{P}$ denote the set of all overpartitions $\pi$
where the smallest part of $\pi$ is not overlined.
We recall that
\begin{align*}
	\frac{q^n}{(1-q^n)^2} &= q^n + 2q^{2n} + 3q^{3n} + 4q^{4n} + \dots
\end{align*}
and so in the generating functions we can interpret this as contributing
the smallest part and weighted by the number of times the smallest part
appears.

\begin{definition}
Suppose $n\ge 0$ is an integer. 
We let $\sptA{A1}{n}$ denote the weighted sum of the partition pairs 
$(\pi_1,\pi_2)\in PP$ of $n$, where the weight is given by the number of times 
$s(\pi_1)$ appears.
We let $\sptA{A3}{n}$ denote the weighted sum of the partition pairs
$(\pi_1,\pi_2)\in PP$ of $n$, where the weight is given by one less than the 
number of times $s(\pi_1)$ appears.
We let $\sptA{A5}{n}$ denote the weighted sum of the partition pairs 
$(\pi_1,\pi_2)\in PP$ of $n$, where the weight is given by the number of times 
$s(\pi_1)$ appears past the first $s(\pi_1)$ times. 
Lastly, we let $\sptA{A7}{n}$ denote the weighted sum of the partition pairs 
$(\pi_1,\pi_2)\in PP$ of $n$, where the weight is given by the number of times
$s(\pi_1)$ appears past the first $s(\pi_1)-1$ times.
\end{definition}

\begin{definition}
Suppose $n\ge 0$ is an integer. We let
$\sptA{C1}{n}$ denote the weighted sum of partitions 
$\pi\in\widetilde{P}$ of $n$, 
where the weight is given by the number of times $s(\pi)$ appears.
We let
$\sptA{C5}{n} = \sptA{C1}{n}-\sptA{}{n/2}$, where $\sptA{}{n/2}$ is zero if 
$n$ is odd.
\end{definition}

\begin{definition}
Suppose $n\ge 0$ is an integer. We let
$\sptA{E2}{n}$ denote the weighted sum of overpartitions 
$\pi\in\overline{P}$ of $n$,
where the weight is given by
$(-1)^{s(\pi)}$ times the number of times that $s(\pi)$ appears.
We let $\sptA{E4}{n}$ denote the weighted sum overpartitions 
$\pi\in\overline{P}$ of $n$, 
where the weight is given by
by one less than the number of times $s(\pi)$ appears.
\end{definition}

The definition of $\sptA{C5}{n}$ will be justified by
Corollary \ref{CorollaryC1AndC5Identities}.
We note 
$\sptA{E2}{n} = \sptBar{2}{n} - \sptBar{1}{n}$, where $\sptBar{2}{n}$ and 
$\sptBar{1}{n}$ are two spt functions studied in \cite{GarvanJennings};
in fact $S_{E2}(z,q)=\SB_2(z,q)-\SB_1(z,q)$ where $\SB_2(z,q)$ and 
$\SB_1(z,q)$ are the two variable generalizations of the generating functions
for $\sptBar{2}{n}$ and $\sptBar{1}{n}$ that we studied in 
\cite{GarvanJennings}.
Additionally we  have
$\sptA{E4}{n} = \sptBar{}{n}-\frac{\overline{p}(n)}{2}$,
where $\overline{p}(n)$ is the number of overpartitions of $n$.
These functions satisfy the following congruences.

\begin{theorem}\label{TheoremCongruences}
\begin{align*}
	\sptA{A1}{3n}\equiv 0\pmod{3}
	,\\
	\sptA{A3}{3n+1}\equiv 0\pmod{3}
	,\\
	\sptA{A3}{5n+1}\equiv 0\pmod{5}
	,\\
	\sptA{A5}{5n+4}\equiv 0\pmod{5}
	,\\
	\sptA{A5}{7n+1}\equiv 0\pmod{7}
	,\\
	\sptA{A7}{5n+4}\equiv 0\pmod{5}
	,\\
	\sptA{C1}{5n+3}\equiv 0\pmod{5}
	,\\
	\sptA{C5}{5n+3}\equiv 0\pmod{5}
	,\\
	\sptA{E2}{3n}\equiv 0\pmod{3}
	,\\
	\sptA{E4}{3n+1}\equiv 0\pmod{3}
.
\end{align*}
\end{theorem}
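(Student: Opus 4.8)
The proof is driven by the principle isolated in the introduction: for $p$ prime and $\zeta_p$ a primitive $p$-th root of unity, vanishing of the coefficient of $q^N$ in $S_X(\zeta_p,q)$ forces $\sptA{X}{N}\equiv 0\pmod p$. So each of the ten statements reduces to a single assertion of the form ``$S_X(\zeta_p,q)$ contains no term $q^N$ with $N$ in the prescribed residue class.'' The plan is therefore to make $S_X(\zeta_p,q)$ explicit and then read off the missing progression directly. Two independent routes to such an explicit form are available, and together they cover all ten cases.

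The first route uses the series identities established in Section 3 by feeding each Bailey pair $(\alpha^X,\beta^X)$ into the appropriate limiting case of Bailey's Lemma (together with a conjugate Bailey pair): a Hecke--Rogers-type double sum for $\aqprod{z,z^{-1},q}{q}{\infty}S_X(z,q)$ when $X\in\{A1,A3,C1,E4\}$, and an infinite-product (and single-sum) formula for $S_X(z,q)$ when $X\in\{A5,A7,C5,E2\}$. In the double-sum case the summand carries a factor $(1-z^{c})^2z^{-c}$ or $(1-z^{c})(1-z^{c'})$ with $c=c(m,n)$ linear in the summation indices; setting $z=\zeta_p$ annihilates exactly the terms with $p\mid c$, so the sum collapses onto the sublattice $p\nmid c(m,n)$, where each surviving summand is a fixed nonzero element of $\mathbb{Z}[\zeta_p]$ times $q^{Q(m,n)}$ for an explicit rational quadratic form $Q$. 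One then completes the square and counts modulo $p$ to see that the exponents $Q(m,n)$ with $p\nmid c(m,n)$ avoid the forbidden class; since $\aqprod{\zeta_p,\zeta_p^{-1},q}{q}{\infty}$ is an eta-quotient supported on a single coset modulo $p$ (for instance it equals $3\aqprod{q^3}{q^3}{\infty}$ when $p=3$), dividing by it preserves the vanishing. In the product case, substituting $z=\zeta_p$ turns the formula for $S_X(\zeta_p,q)$ into a theta-type product, hence by the Jacobi triple product and related identities into a single theta series; a $p$-dissection of that series, with the piece in the forbidden progression seen to be empty, delivers those congruences.

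The second route, developed in Section 5, exploits the analogues of (\ref{EqIntro1}): after the appropriate limit of Bailey's Lemma, $(1-z)(1-z^{-1})S_X(z,q)$ is a known rank generating function minus an infinite-product-times-crank generating function, and the classical $p$-dissections of ranks and cranks then make $S_X(\zeta_p,q)$ completely explicit, the component in the required residue class being manifestly $0$ in each case. This route handles all ten congruences, in particular the ones (such as the larger-modulus congruences in group $A$) that the series identities do not immediately yield. I expect the genuine work to lie in the modular bookkeeping common to both routes --- the quadratic-residue arguments modulo $p$ for the exponents in the double sums and the term-by-term checking of the dissected theta and rank/crank pieces --- and it is exactly this bookkeeping that dictates the choice of Bailey pairs: the $n=3k,\,3k\pm1$ and $n=2k,\,2k+1$ splittings of the $\alpha^X$ produce theta functions whose exponents are precisely those missing the target progressions modulo $3$, $5$, and $7$.
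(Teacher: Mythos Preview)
Your plan is essentially the paper's own: reduce each congruence to $[q^N]S_X(\zeta_p,q)=0$, then establish that vanishing either via the series/product identities of Theorem~\ref{TheoremFinalSeriesIdentities} (Section~4) or via the rank-minus-crank dissections of Theorem~\ref{TheoremAllDissections} together with \cite{JS} (Section~5). Two small corrections on the division of labor: the series route does \emph{not} dispose of $\sptA{A3}{5n+1}\pmod 5$ or $\sptA{C1}{5n+3}\pmod 5$---these come only from the dissections---whereas $\sptA{A5}{7n+1}\pmod 7$ \emph{is} proved from the series in Section~4, but the verification there requires checking identities of generalized eta quotients on $\Gamma_1(21)$ against the valence formula, which is rather more than the ``modular bookkeeping'' you anticipate.
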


We use the spt-crank-type functions to prove these congruences as 
explained in the introduction. 
We prove Theorem \ref{TheoremCongruences} by showing 
the following terms are zero:
$q^{3n}$ in $S_{A1}(\zeta_3,q)$,
$q^{3n+1}$ in $S_{A3}(\zeta_3,q)$,
$q^{5n+1}$ in $S_{A3}(\zeta_5,q)$,
$q^{5n+4}$ in $S_{A5}(\zeta_5,q)$,
$q^{7n+1}$ in $S_{A5}(\zeta_7,q)$,
$q^{5n+4}$ in $S_{A7}(\zeta_5,q)$,
$q^{5n+4}$ in $S_{C1}(\zeta_5,q)$,
$q^{5n+3}$ in $S_{C5}(\zeta_5,q)$,
$q^{3n}$ in $S_{E2}(\zeta_3,q)$,
and $q^{3n+1}$ in $S_{E4}(\zeta_3,q)$.
For convenience, if $F(x)$ is a series in $x$, then we let 
$[x^N]F(x)$ denote the coefficient of $x^N$ in $F(x)$.

That these coefficients are zero is a stronger result than just the congruences
alone. These coefficients being zero gives a manner in which to split up the
numbers $\sptA{X}{an+b}$ to see the congruences. In each case, this
would allow us to define a so called spt-crank based off of the 
$M_{X}(m,n)$. An initial interpretation of the $M_{X}(m,n)$ would be in terms
of weighted vector partitions, to find an interpretation in terms of smallest parts
would take considerable work. One can find an example of this process
in Section 3 of \cite{GarvanJennings}, we do not pursue this idea here.
We do note, however, that since
$S_{E2}(z,q) = \SB_2(z,q)-\SB_1(z,q)$, we do have a combinatorial
interpretation of $M_{E2}(m,n)$ as being the difference of the corresponding
spt-cranks defined in \cite{GarvanJennings}. Also an interpretation in 
terms of a crank defined on partition pairs for $M_{A1}(m,n)$, $M_{A3}(m,n)$,
$M_{A5}(m,n)$, and $M_{A7}(m,n)$ was given by the second author in
\cite{JS}.

We prove the following series representations for the $S_X(z,q)$.
\begin{theorem}\label{TheoremFinalSeriesIdentities}
\begin{align}
	&(1+z)\aqprod{z,z^{-1}}{q}{\infty}
	S_{A1}(z,q)
		\nonumber\\\label{FinalA1Series}&
		=
		\frac{1}{\aqprod{q}{q}{\infty}}
		\sum_{k=1}^\infty (1-z^{k-1})(1-z^k)z^{1-k}
		\Big( (-1)^{k+1}q^{k(k-1)/2} 
			%\nonumber\\&\qquad
			+
			\sum_{n=1}^\infty (-1)^{n+k+1} q^{\frac{k(k-1)}{2} + \frac{n(n-3)}{2} + 2kn}(1+q^n)
		\Big)
	,\\
	&(1+z)\aqprod{z,z^{-1}}{q}{\infty}
	S_{A3}(z,q)
		\nonumber\\\label{FinalA3Series}&
		=
		\frac{1}{\aqprod{q}{q}{\infty}}
		\sum_{k=1}^\infty \sum_{n=0}^\infty
			(1-z^{k-1})(1-z^k)z^{1-k}(-1)^{n+k+1} q^{\frac{k(k+1)}{2} + \frac{n(n-3)}{2} + 2kn - 1}(1-q^{2n+1})
	,\\
	\label{A5Series}
	&(1+z)\aqprod{z,z^{-1}}{q}{\infty}
	S_{A5}(z,q)
	=
		\sum_{k=-\infty}^\infty (-1)^{k} (1-z^k)(1-z^{k+1})z^{-k}q^{\frac{k(3k+1)}{2}}	
	,\\
	\label{A7Series}
	&(1+z)\aqprod{z,z^{-1}}{q}{\infty}
	S_{A7}(z,q)
	=
		\sum_{k=-\infty}^\infty (-1)^{k} (1-z^k)(1-z^{k+1})z^{-k}q^{\frac{k(3k-1)}{2}}	
	,\\
	&(1+z)\aqprod{z,z^{-1}}{q}{\infty}
	S_{C1}(z,q)
		\nonumber\\\label{FinalC1Series}&
		=
		\frac{1}{\aqprod{q}{q}{\infty}}
		\sum_{k=1}^\infty\sum_{n=0}^\infty
		(1-z^{k-1})(1-z^k)z^{1-k}(-1)^{k+1} q^{\frac{k(k-1)}{2} + \frac{n(3n-1)}{2} + 3kn}
		%\nonumber\\&\qquad\times
			(1-q^{2k-1})(1-q^{k+n})
	,\\
	\label{C5Series}
	&(1+z)\aqprod{z,z^{-1}}{q}{\infty}
	S_{C5}(z,q)
	=
		\sum_{k=-\infty}^\infty (1-z^{k-1})(1-z^{k})z^{1-k} 
		(-1)^{k}q^{k^2}
	,\\
	&
	\label{E2Series}
	(1+z)\aqprod{z,z^{-1}}{q}{\infty}
	S_{E2}(z,q)
	=
		\aqprod{q}{q^2}{\infty}
		\sum_{k=1}^\infty (1-z^k)(1-z^{k-1})z^{1-k}q^{\frac{k(k-1)}{2}}
	,\\
	&(1+z)\aqprod{z,z^{-1}}{q}{\infty}
	S_{E4}(z,q)
		\nonumber\\\label{FinalE4Series}
		&=
		\frac{1}{\aqprod{q}{q}{\infty}}
		\sum_{k=1}^\infty\sum_{n=0}^\infty
		(1-z^{k-1})(1-z^k)z^{1-k}(-1)^{k+n+1} q^{\frac{k(k+1)}{2} + n^2-n +2kn -1}(1-q^{2n+1})
	.
\end{align}
\end{theorem}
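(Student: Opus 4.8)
The plan is to treat all eight identities through one common reduction and then split into cases. The first step is uniform: since each $(\alpha^X,\beta^X)$ is a Bailey pair relative to $(1,q)$, I would substitute the defining relation $\beta^X_n=\sum_{k=0}^{n}\alpha^X_k/(\aqprod{q}{q}{n-k}\aqprod{q}{q}{n+k})$ into $\sum_{n\ge 0}\aqprod{z,z^{-1}}{q}{n}q^n\beta^X_n$, interchange the order of summation, replace $n$ by $n+k$, and evaluate the resulting inner ${}_2\phi_1$ by the $q$-Gauss summation; this is precisely Bailey's Lemma with the specialization $\rho_1=z$, $\rho_2=z^{-1}$ together with the conjugate Bailey pair coming from $q$-Gauss. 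This produces the single uniform identity
$$\sum_{n\ge 0}\aqprod{z,z^{-1}}{q}{n}q^n\beta^X_n=\frac{\aqprod{z,z^{-1}}{q}{\infty}}{\aqprod{q}{q}{\infty}^2}\sum_{n\ge 0}\frac{q^n\alpha^X_n}{(1-zq^n)(1-z^{-1}q^n)}.$$
Subtracting the $n=0$ term (using $\alpha^X_0=\beta^X_0=1$), using $\aqprod{z,z^{-1}}{q}{\infty}/((1-zq^n)(1-z^{-1}q^n))=\aqprod{z,z^{-1}}{q}{n}\aqprod{zq^{n+1},z^{-1}q^{n+1}}{q}{\infty}$, and multiplying by $(1+z)\aqprod{z,z^{-1}}{q}{\infty}$, this rewrites $(1+z)\aqprod{z,z^{-1}}{q}{\infty}S_X(z,q)$ as $(1+z)P_X(q)$ times an explicit single series in $n$. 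So everything reduces to evaluating $\sum_{n\ge0}q^n\alpha^X_n/((1-zq^n)(1-z^{-1}q^n))$ with the explicit $\alpha^X_n$ inserted.

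For the second step I would use the partial fraction $\frac{1}{(1-zq^n)(1-z^{-1}q^n)}=\frac{1}{(1-z)(1+z)}\left(\frac{1}{1-z^{-1}q^n}-\frac{z^2}{1-zq^n}\right)$ — this is the natural source of the prefactor $(1+z)$ — and then insert the explicit formulas for $\alpha^X_n$, breaking the sum according to residue classes ($n\equiv 0,\pm 1\pmod 3$ for $A1,A3,A5,A7$; $n$ even or odd for $C1,C5$; the two-case formula for $E2,E4$). After reindexing, each piece is a (bilateral) sum of the shape $\sum_m \pm q^{\,\mathrm{quadratic}(m)}/(1-zq^{\ell(m)})$ together with its $z\mapsto z^{-1}$ reflection.

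For $A5$, $A7$, $C5$, $E2$ these shifted pieces and their reflections telescope: the factors $1-zq^{\ell}$ pair up and cancel, the leftover $-(1+z)P_X(q)$ is absorbed using Euler's pentagonal number theorem (for $A5$, $A7$) or an $\aqprod{q}{q^2}{\infty}$-type identity and the relation $S_{E2}=\SB_2-\SB_1$ from \cite{GarvanJennings} (for $C5$, $E2$), and the survivor is the single theta/false-theta series in \eqref{A5Series}, \eqref{A7Series}, \eqref{C5Series}, \eqref{E2Series}. For $A1$, $A3$, $C1$, $E4$ there is no collapse; instead one of the infinite products in the summand, together with the factor $1/\aqprod{q}{q}{\infty}$, is expanded by the Jacobi triple product identity, introducing the second summation index, and after reindexing to align the quadratic form and the sign one reads off the Hecke-Rogers double sums \eqref{FinalA1Series}, \eqref{FinalA3Series}, \eqref{FinalC1Series}, \eqref{FinalE4Series}.

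The main obstacle is the case-by-case bookkeeping: getting the exponents, signs, and summation ranges (several intermediate sums are bilateral) to line up so the answer comes out in precisely the normalized form stated, and — for the four single-series identities — spotting the exact classical identity (pentagonal number theorem, Jacobi triple product, or a Kronecker-type partial-fraction identity for a reciprocal theta function) that forces the $\aqprod{q}{q}{\infty}$ denominators to cancel. Justifying the interchanges of summation and the manipulation of the bilateral series is routine given the $q$-adic convergence of everything in sight, so I would relegate it to a remark.
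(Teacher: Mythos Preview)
Your first step---applying Bailey's Lemma with $\rho_1=z$, $\rho_2=z^{-1}$ to the Slater pair $(\alpha^X,\beta^X)$---is correct, and in fact is exactly what the paper does in Section~5 to obtain the rank--crank differences of Proposition~\ref{TheoremRankCrankDifferences}. But this is \emph{not} how the paper proves Theorem~\ref{TheoremFinalSeriesIdentities}, and your second step has a real gap.

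The paper's route is entirely different. It never uses the Slater pairs $(\alpha^X,\beta^X)$ in the proof of Theorem~\ref{TheoremFinalSeriesIdentities}. Instead it invokes the finite expansion (Proposition~4.1 of \cite{Garvan3})
\[
(1+z)\aqprod{z,z^{-1}}{q}{n}=\sum_{j=-n}^{n+1}(-1)^{j+1}\frac{\aqprod{q}{q}{2n}}{\aqprod{q}{q}{n+j}\aqprod{q}{q}{n-j+1}}(1-q^{2j-1})z^jq^{j(j-3)/2+1},
\]
substitutes this into the defining sum for $S_X$, and extracts the coefficient of $z^j$. That coefficient is a single $q$-series of the form $\sum_{n\ge0}q^{cn}\beta_n(q^{2j-1})$ (or the $\beta^*$ variant) for the \emph{trivial} Bailey pair $\beta_n(a)=1/\aqprod{aq,q}{q}{n}$, and is then evaluated by one of the conjugate Bailey pair identities (\ref{LemmaBailey1})--(\ref{LemmaBailey7}). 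For $A1$, $A3$, $C1$, $E4$ this produces Proposition~\ref{PropositionFirstDoubleSeries} with $(z^k+z^{1-k})$ in place of $(1-z^{k-1})(1-z^k)z^{1-k}$; setting $z=1$ then yields the product identities (\ref{CorollaryProduct1})--(\ref{CorollaryProduct4}), and subtracting converts to the final form. None of this structure appears in your plan.

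The gap in your plan is the passage from the Lambert-type output of Bailey's Lemma to the stated series. After your first step the summand is $q^n\alpha^X_n/((1-zq^n)(1-z^{-1}q^n))$: there is no ``infinite product in the summand'' to expand by the Jacobi triple product, so your description of how the second summation index arises for $A1$, $A3$, $C1$, $E4$ does not parse. What you actually obtain for, say, $C1$ is precisely $R(z,q^2)$ minus a crank (this is (\ref{C1RankCrankDifference})); getting from there to (\ref{FinalC1Series}) would require an independent Hecke--Rogers identity for the ordinary rank in the variable $z$, which you have not supplied and which is essentially as hard as the theorem itself. Likewise the ``telescoping'' for $A5$, $A7$, $C5$, $E2$ is asserted but not shown: the denominators $(1-zq^{3k})$, $(1-zq^{3k\pm1})$ arising from the residue classes of $\alpha^{A5}$ do not cancel against one another in any obvious way, and appealing to $S_{E2}=\SB_2-\SB_1$ imports results equivalent to what you are trying to prove. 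In short, your uniform reduction lands you at the rank--crank identities of Section~5, not at Theorem~\ref{TheoremFinalSeriesIdentities}, and the bridge you sketch between the two is the missing content.
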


By letting $z$ be the appropriate root of unity, we use Theorem 
\ref{TheoremFinalSeriesIdentities} to prove all of the congruences in Theorem 
\ref{TheoremCongruences} except for $\sptA{A3}{5n+3}\equiv 0\pmod{5}$
and $\sptA{C1}{5n+3}\equiv 0\pmod{5}$. Using a later identity will prove that
$\sptA{C1}{5n+3}\equiv 0\pmod{5}$, however we will not reprove the congruence
$\sptA{A3}{5n+3}\equiv 0\pmod{5}$.
From Theorem \ref{TheoremFinalSeriesIdentities} we will also deduce the following
Hecke-Rogers-type double series for $S_{A1}(z,q)$, $S_{A3}(z,q)$, $S_{C1}(z,q)$, and 
$S_{E4}(z,q)$.
\begin{corollary}\label{CorollaryHeckeDoubleSeries}
\begin{align}
	\label{HeckeDoubleA1Series}
	&(1+z)\aqprod{z,z^{-1},q}{q}{\infty}
	S_{A1}(z,q)
	\nonumber\\
	&=
		\sum_{k=0}^\infty
		\sum_{n=-[k/2]}^{[k/2]}
		(-1)^{n+k}(1-z^{k-2|n|})(1-z^{2|n|-k+1})
		q^{\frac{k^2-k-3n^2-n}{2}}
	,\\
	\label{HeckeDoubleA3Series}
	&(1+z)\aqprod{z,z^{-1},q}{q}{\infty}
	S_{A3}(z,q)
	\nonumber\\
	&=
		\sum_{k=1}^\infty
		\sum_{n=1}^{[k/2]}
		(-1)^{n+k}(1-z^{k-2n+1})(1-z^{2n-k})q^{\frac{k^2-k-3n^2+n}{2}}
		\nonumber\\&\quad
		-
		\sum_{k=1}^\infty
		\sum_{n=0}^{[k/2]}
		(-1)^{n+k}(1-z^{k-2n})(1-z^{2n-k+1})q^{\frac{k^2+k-3n^2-n}{2}}
	\\
	\label{HeckeDoubleC1Series}
	&(1+z)\aqprod{z,z^{-1},q}{q}{\infty}
	S_{C1}(z,q)
	\nonumber\\
	&=
		\sum_{k=1}^\infty
		\sum_{n=0}^{[k/3]}
		(-1)^{n+k}(1-z^{3n-k+1})(1-z^{k-3n})q^{\frac{k^2-k}{2}-3n^2+n}
		\nonumber\\&\quad
		-
		\sum_{k=1}^\infty
		\sum_{n=0}^{[k/3]}
		(-1)^{n+k}(1-z^{3n-k+1})(1-z^{k-3n})q^{\frac{k^2+k}{2}-3n^2-n}
		\nonumber\\&\quad
		+
		\sum_{k=1}^\infty
		\sum_{n=1}^{[k/3]}
		(-1)^{n+k}(1-z^{3n-k})(1-z^{k-3n+1})q^{\frac{k^2-k}{2}-3n^2+n}
		\nonumber\\&\quad
		-
		\sum_{k=1}^\infty
		\sum_{n=1}^{[k/3]}
		(-1)^{n+k}(1-z^{3n-k})(1-z^{k-3n+1})q^{\frac{k^2+k}{2}-3n^2-n}
	,\\
	\label{HeckeDoubleE4Series}
	&(1+z)\aqprod{z,z^{-1},q}{q}{\infty}
	S_{E4}(z,q)
	\nonumber\\
	&=
		\sum_{k=1}^\infty
		\sum_{n=1}^{[k/2]}
		(-1)^{n+k}(1-z^{2n-k})(1-z^{k-2n+1})q^{\frac{k^2-k}{2}-n^2}
		\nonumber\\&\quad
		-
		\sum_{k=1}^\infty
		\sum_{n=0}^{[k/2]}
		(-1)^{n+k}(1-z^{2n-k+1})(1-z^{k-2n})q^{\frac{k^2+k}{2}-n^2}
.
\end{align}
\end{corollary}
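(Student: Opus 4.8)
The plan is to obtain each of \eqref{HeckeDoubleA1Series}, \eqref{HeckeDoubleA3Series}, \eqref{HeckeDoubleC1Series}, and \eqref{HeckeDoubleE4Series} directly from the corresponding double series already proved in Theorem~\ref{TheoremFinalSeriesIdentities}, that is, from \eqref{FinalA1Series}, \eqref{FinalA3Series}, \eqref{FinalC1Series}, and \eqref{FinalE4Series}. First I would multiply each of those four identities through by $\aqprod{q}{q}{\infty}$; this cancels the prefactor $\tfrac{1}{\aqprod{q}{q}{\infty}}$ on the right and turns the left side into $(1+z)\aqprod{z,z^{-1},q}{q}{\infty}S_X(z,q)$, which is exactly the left side of the Corollary. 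So the statement reduces to a purely formal rearrangement of the double sum $\sum_{k\ge1}\sum_{n\ge0}(1-z^{k-1})(1-z^k)z^{1-k}\,\epsilon_k\, q^{Q(k,n)}\prod_i(1\pm q^{L_i(k,n)})$ on the right, where $Q$ is an inhomogeneous quadratic form whose only mixed term is $c\,kn$ (with $c=2$ in cases $A1$, $A3$, $E4$ and $c=3$ in case $C1$), each $L_i$ is linear, and $\epsilon_k=\pm1$. Since every coefficient appearing is a Laurent polynomial in $z$ with integer coefficients, all of this takes place in the ring of formal power series in $q$ and no convergence or reordering issue arises.

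The engine of the argument is a linear change of the summation variables that diagonalizes $Q$. I would expand the product $\prod_i(1\pm q^{L_i})$ into its monomials (two for $A1$ from the factor $(1+q^n)$, two for $A3$ and $E4$ from $(1-q^{2n+1})$, four for $C1$ from $(1-q^{2k-1})(1-q^{k+n})$), and then treat each monomial $\pm q^{L}$ separately: for each one there is a substitution of the form $K=k+cn+a$ together with a shift $n\mapsto n+b$ or a reflection $n\mapsto-n$, with the integers $a,b$ chosen so that completing the square turns $Q(k,n)+L(k,n)$ into one of the exponents $\tfrac12(K^2-K)-\cdots$ or $\tfrac12(K^2+K)-\cdots$ displayed in the Corollary. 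Under the resulting relation $k=K-cn+(\text{const})$ the weight $(1-z^{k-1})(1-z^k)z^{1-k}$ is rewritten using the two elementary identities $(1-z^{a-1})z^{1-a}=-(1-z^{1-a})$ and $(1-z^{a})z^{-a}=-(1-z^{-a})$, which convert it into $\pm(1-z^{k-cn})(1-z^{cn-k+1})$, and, after the reflection, into the $|n|$-form $(1-z^{k-2|n|})(1-z^{2|n|-k+1})$ of \eqref{HeckeDoubleA1Series}; the sign picked up here, combined with $(-1)^{k}=(-1)^{K+cn}$, reproduces the displayed factor $(-1)^{n+k}$. The $q^n$ half of the factor $(1+q^n)$ in the $A1$ case is precisely what produces the negative-index copy of the sum, extending the range of the new inner index from $n\ge0$ to $-[k/2]\le n\le[k/2]$; in the other cases the two (respectively four) monomials, with their separate choices of $a$ and $b$, split the single double sum into the two, two, and four double sums listed in the statement.

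I expect the main obstacle to be bookkeeping rather than anything conceptual: one must check, monomial by monomial, that the quadratic exponent, the $z$-factor, and the overall sign all match the stated sums, and, crucially, that enlarging each index range to the symmetric interval written in the Corollary only adds terms in which the weight $(1-z^{k-cn})(1-z^{cn-k+1})$ (respectively $(1-z^{k-2|n|})(1-z^{2|n|-k+1})$) is identically zero, for instance the $K=0$ terms and the terms coming from the excluded original value $k=0$. Case $C1$ is the most laborious, since the four monomials of $(1-q^{2k-1})(1-q^{k+n})$ require four separate completions of the square, with four different choices of $a,b$, and the four resulting double sums must be matched one at a time to the four sums of \eqref{HeckeDoubleC1Series}; I would organize that verification by checking the three invariants (quadratic exponent, $z$-factor, sign) independently for each. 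With the four identities in hand, specializing $z$ to the relevant primitive root of unity, as carried out in Section~4, then gives the congruences of Theorem~\ref{TheoremCongruences} not already obtained from Theorem~\ref{TheoremFinalSeriesIdentities}.
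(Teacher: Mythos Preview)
Your approach is correct and is essentially the same as the paper's: both proofs are purely formal rearrangements of the double sums in Theorem~\ref{TheoremFinalSeriesIdentities}, driven by the linear substitution $K=k+cn+\text{const}$ that completes the square in the exponent, together with the identity $(1-z^{k-1})(1-z^k)z^{1-k}=-(1-z^{1-k})(1-z^k)$ to recast the $z$-weight. The only cosmetic difference is direction: the paper starts from the Hecke--Rogers side, interchanges the order of summation, shifts $k\mapsto k+2|n|$ (or $k+3n$, etc.), splits the $n$-sum into $n=0$, $n>0$, $n<0$, and recovers the form in Theorem~\ref{TheoremFinalSeriesIdentities}; you run the same substitutions in reverse. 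Your remark about boundary terms with vanishing $z$-weight is exactly what the paper uses implicitly when it adds or drops $k=0$ terms.

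One small inaccuracy in your closing remark: the Hecke--Rogers identities of this Corollary do not yield any congruences beyond those already obtained from Theorem~\ref{TheoremFinalSeriesIdentities}, since they are derived from it; the two missing congruences ($\sptA{A3}{5n+1}$ and $\sptA{C1}{5n+3}$) are handled only by the dissection formulas in Section~5.
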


Using the Jacobi Triple Product Identity \cite{Andrews2}
in the forms
\begin{align*}
	\aqprod{zq,z^{-1}q,q^2}{q^2}{\infty}
	&=
	\sum_{n=-\infty}^\infty (-1)^n z^n q^{n^2}
	,\\
	\aqprod{z,z^{-1}q,q}{q}{\infty}
	&=
	\sum_{n=-\infty}^\infty (-1)^n z^n q^{n(n-1)/2}
	,	
\end{align*}
we see Theorem \ref{TheoremFinalSeriesIdentities} gives the remarkable fact 
that $S_{A1}(z,q)$, $S_{A3}(z,q)$, $S_{C5}(z,q)$, and $S_{E2}(z,q)$
can be written entirely in terms of products. Based on the
initial series definitions of
$S_{A1}(z,q)$, $S_{A3}(z,q)$, $S_{C5}(z,q)$, and $S_{E2}(z,q)$
these 
identities are actually known identities in disguise
(see Chapter 3 and Appendix II of 
\cite{GasperRahman}).

\begin{corollary}\label{CorollaryTwoVariableProductIdentities}
\begin{align*}
	S_{A5}(z,q)
	&=
		\frac{z\aqprod{zq^2,z^{-1}q,q^3}{q^3}{\infty}}
			{(1+z)\aqprod{z,z^{-1}}{q}{\infty}}
		+
		\frac{\aqprod{zq,z^{-1}q^2,q^3}{q^3}{\infty}}
			{(1+z)\aqprod{z,z^{-1}}{q}{\infty}}
		-
		\frac{\aqprod{q}{q}{\infty}}
			{\aqprod{z,z^{-1}}{q}{\infty}}
	,\\
	S_{A7}(z,q)
	&=
		\frac{z\aqprod{zq,z^{-1}q^2,q^3}{q^3}{\infty}}
			{(1+z)\aqprod{z,z^{-1}}{q}{\infty}}
		+
		\frac{\aqprod{zq^2,z^{-1}q,q^3}{q^3}{\infty}}
			{(1+z)\aqprod{z,z^{-1}}{q}{\infty}}
		-
		\frac{\aqprod{q}{q}{\infty}}
			{\aqprod{z,z^{-1}}{q}{\infty}}
	,\\
	S_{C5}(z,q)
	&=
		\frac{\aqprod{zq,z^{-1}q,q^2}{q^2}{\infty}}
			{\aqprod{z,z^{-1}}{q}{\infty}}
		-
		\frac{\aqprod{q}{q}{\infty}}
			{\aqprod{-q,z,z^{-1}}{q}{\infty}}
	,\\
	S_{E2}(z,q)
	&=
		\frac{\aqprod{-zq,-z^{-1}q,q}{q}{\infty}}
			{\aqprod{z,z^{-1},-q}{q}{\infty}}
		-
		\frac{\aqprod{q^2}{q^2}{\infty}}
			{\aqprod{z,z^{-1}}{q}{\infty}}
.
\end{align*}
\end{corollary}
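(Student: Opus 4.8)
The plan is to derive Corollary~\ref{CorollaryTwoVariableProductIdentities} directly from the single-series identities \eqref{A5Series}, \eqref{A7Series}, \eqref{C5Series}, \eqref{E2Series} of Theorem~\ref{TheoremFinalSeriesIdentities}, by recognizing the right-hand sides as Jacobi-triple-product expansions once the two factors $(1-z^k)(1-z^{k+1})$ (or $(1-z^{k-1})(1-z^k)$) are multiplied out. The only genuine input beyond Theorem~\ref{TheoremFinalSeriesIdentities} is the Jacobi Triple Product Identity in the two forms displayed just before the corollary, together with elementary reindexing of bilateral sums.

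First I would treat $S_{A5}(z,q)$. Expanding $(1-z^k)(1-z^{k+1})z^{-k} = z^{-k} - z - z^{1-k} + z$ is too crude; instead write $(1-z^k)(1-z^{k+1})z^{-k} = z^{-k} - 1 - z + z^{k+1}$, so that \eqref{A5Series} splits into four bilateral theta sums of the shape $\sum_{k}(-1)^k w^k q^{k(3k+1)/2}$ for $w \in \{z^{-1}q^{?}, \dots\}$ after absorbing the exponent shift coming from $q^{k(3k+1)/2}$; completing the square in $k(3k+1)/2$ and matching against $\aqprod{xq^3,x^{-1},q^3}{q^3}{\infty}=\sum_k(-1)^kx^kq^{3\binom{k}{2}+?}$ identifies each piece as a product. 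Collecting the four products, the two constant-in-$k$ terms ($-1$ and $-z$) sum to $-(1+z)$ times a theta that telescopes to $\aqprod{q}{q}{\infty}$ after dividing by $(1+z)$, producing the displayed $-\aqprod{q}{q}{\infty}/\aqprod{z,z^{-1}}{q}{\infty}$ term, while the $z^{-k}$ and $z^{k+1}$ terms give the two triple products $\aqprod{zq,z^{-1}q^2,q^3}{q^3}{\infty}$ and $z\aqprod{zq^2,z^{-1}q,q^3}{q^3}{\infty}$ over $(1+z)\aqprod{z,z^{-1}}{q}{\infty}$. The computation for $S_{A7}(z,q)$ is identical with $q^{k(3k+1)/2}$ replaced by $q^{k(3k-1)/2}$, which swaps the roles of the two $q^3$-triple products, exactly as in the stated formula.

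For $S_{C5}(z,q)$, \eqref{C5Series} is a bilateral sum in $k$ with weight $(1-z^{k-1})(1-z^k)z^{1-k}(-1)^kq^{k^2} = (z^{1-k} - z - 1 + z^{k})(-1)^kq^{k^2}$, wait—more carefully $(1-z^{k-1})(1-z^k)z^{1-k} = z^{1-k} - z - 1 + z^{k}$; the terms $z^{1-k}$ and $z^{k}$ match $\aqprod{zq,z^{-1}q,q^2}{q^2}{\infty}=\sum_k(-1)^kz^kq^{k^2}$ up to trivial shifts and the $z\to z^{-1}$ symmetry, combining (after dividing by the prefactor, which I would fold in using $S_{C5}(z,q)=\text{RHS}/((1+z)\aqprod{z,z^{-1}}{q}{\infty})$) into $\aqprod{zq,z^{-1}q,q^2}{q^2}{\infty}/\aqprod{z,z^{-1}}{q}{\infty}$, while the $-z-1 = -(1+z)$ terms give $-\sum_k(-1)^kq^{k^2}/\aqprod{z,z^{-1}}{q}{\infty} = -\aqprod{q,q,q^2}{q^2}{\infty}/\aqprod{z,z^{-1}}{q}{\infty}$, and $\aqprod{q,q,q^2}{q^2}{\infty}=\aqprod{q}{q}{\infty}/\aqprod{-q}{q}{\infty}$ rewrites this as $\aqprod{q}{q}{\infty}/\aqprod{-q,z,z^{-1}}{q}{\infty}$, matching the corollary. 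The case $S_{E2}(z,q)$ is the easiest: \eqref{E2Series} already has the factored form $\aqprod{q}{q^2}{\infty}\sum_{k\ge1}(1-z^k)(1-z^{k-1})z^{1-k}q^{k(k-1)/2}$; extending the sum to a bilateral one over $\mathbb{Z}$ via the $k\mapsto 1-k$ symmetry and applying $\aqprod{z,z^{-1}q,q}{q}{\infty}=\sum_k(-1)^kz^kq^{k(k-1)/2}$ to the $z^{k}$ and $z^{1-k}$ pieces yields $\aqprod{-zq,-z^{-1}q,q}{q}{\infty}$ — note the signs work out because $q^{k(k-1)/2}$ without the $(-1)^k$ forces the substitution $z\mapsto -z$ in the JTP — while the $-(1+z)$ piece contributes $-\aqprod{q}{q^2}{\infty}\aqprod{q}{q}{\infty}\big/\big((1+z)\cdot(\text{correction})\big)$; combining with $\aqprod{q}{q^2}{\infty}\aqprod{q^2}{q^2}{\infty}=\aqprod{q}{q}{\infty}$ and $\aqprod{q}{q^2}{\infty}=\aqprod{q}{q}{\infty}/\aqprod{q^2}{q^2}{\infty}$ and the prefactor $\aqprod{-q}{q}{\infty}$ absorbed into the denominator gives the stated $-\aqprod{q^2}{q^2}{\infty}/\aqprod{z,z^{-1}}{q}{\infty}$ and $\aqprod{-zq,-z^{-1}q,q}{q}{\infty}/\aqprod{z,z^{-1},-q}{q}{\infty}$.

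The main obstacle is purely bookkeeping: in each case the weight $(1-z^a)(1-z^b)z^c$ expands into four monomials in $z$, two of which are $z$-independent and must be recognized as assembling into $-(1+z)$ times a single theta that collapses (via JTP and the Euler/Gauss product evaluations such as $\aqprod{q,q,q^2}{q^2}{\infty}=\aqprod{q}{q}{\infty}/\aqprod{-q}{q}{\infty}$) to the ``subtracted'' product term, while the remaining two monomials must be matched — after completing the square in the quadratic exponent and reindexing the bilateral sum — against the correct specialization of the Jacobi triple product, with the $z\leftrightarrow z^{-1}$ symmetry used to merge a $z^{k}$-sum and a $z^{-k}$-sum into one. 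I would organize this as a short lemma extracting the pattern ``$\sum_{k\in\mathbb{Z}}(1-z^{k+\delta})(1-z^{k+\epsilon})z^{-k}(-1)^{?k}q^{Q(k)} = \text{two triple products} - (1+z^{\delta+\epsilon})\cdot\theta$'' and then applying it four times; the one subtlety to flag is that \eqref{A5Series}, \eqref{A7Series}, \eqref{C5Series} are genuinely bilateral whereas \eqref{E2Series} is one-sided and must first be symmetrized, and that the parity of the power of $q$ in the quadratic form dictates whether one uses the $q^{n^2}$-form or the $q^{n(n-1)/2}$-form of JTP, and whether a sign change $z\mapsto -z$ is needed.
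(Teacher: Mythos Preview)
Your approach is correct and is precisely what the paper indicates: the paper does not write out a proof of this corollary but simply remarks, immediately before stating it, that the identities follow from the single-series formulas \eqref{A5Series}, \eqref{A7Series}, \eqref{C5Series}, \eqref{E2Series} of Theorem~\ref{TheoremFinalSeriesIdentities} via the Jacobi Triple Product Identity (and notes they are also classical summations in disguise). Your expansion of the weights $(1-z^k)(1-z^{k+1})z^{-k}=z^{-k}-1-z+z^{k+1}$ (and the analogous ones), followed by recognizing each bilateral piece as a triple product and the $-(1+z)$ piece as the Euler/Gauss theta, is exactly the intended computation; the one-sided sum in \eqref{E2Series} indeed needs the $k\mapsto 1-k$ symmetrization you flag, and your product simplifications $\aqprod{q}{q^2}{\infty}\aqprod{-q}{q}{\infty}=1$ and $\aqprod{q,q,q^2}{q^2}{\infty}=\aqprod{q}{q}{\infty}/\aqprod{-q}{q}{\infty}$ are the right finishing touches.
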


We prove the following dissections of
$S_{C1}(\zeta_5,q)$, $S_{C5}(\zeta_5,q)$, $S_{E2}(\zeta_3,q)$, and
$S_{E4}(\zeta_3,q)$. The dissections for $S_{A1}(\zeta_3,q)$,
$S_{A3}(\zeta_3,q)$, $S_{A3}(\zeta_5,q)$, $S_{A5}(\zeta_5,q)$,
$S_{A5}(\zeta_7,q)$, and $S_{A7}(\zeta_5,q)$ can be found in \cite{JS}.
\begin{theorem}\label{TheoremAllDissections}
\begin{align*}
	&(1-\zeta_5)(1-\zeta_5^{-1})S_{C1}(\zeta_5,q)
	\\
	&=
		\frac{\aqprod{q^{50}}{q^{50}}{\infty}\jacprod{q^{20}}{q^{50}}}
			{\jacprod{q^{10}}{q^{50}}^2}
		-
		\frac{\aqprod{q^{50}}{q^{50}}{\infty}\jacprod{q^{25}}{q^{50}}}{\jacprod{q^{5}}{q^{25}}}
		\\&\quad
		-
		2(\zeta_5+\zeta_5^{-1})q^5\frac{\aqprod{q^{50}}{q^{50}}{\infty}\jacprod{q^{5}}{q^{50}}}
			{\jacprod{q^{10}}{q^{25}}}	
		+
		q^{10}\frac{\zeta_5+\zeta_5^{-1}-2}{\aqprod{q^{50}}{q^{50}}{\infty}}
			\sum_{n=-\infty}^\infty \frac{(-1)^nq^{75n(n+1)}}{1-q^{50n+10}}
		\\&\quad
		-
		q^6(\zeta_5+\zeta_5^{-1})\frac{\aqprod{q^{50}}{q^{50}}{\infty}\jacprod{q^{10}}{q^{50}}}
		{\jacprod{q^{20}}{q^{50}}^2}
		+
		2q\frac{\aqprod{q^{50}}{q^{50}}{\infty}\jacprod{q^{15}}{q^{50}}}{\jacprod{q^{5}}{q^{25}}}
		\\&\quad
		-
		(\zeta_5+\zeta_5^{-1})q\frac{\aqprod{q^{50}}{q^{50}}{\infty}\jacprod{q^{25}}{q^{50}}}{\jacprod{q^{10}}{q^{25}}}	
		-
		q^{16}\frac{2\zeta_5+2\zeta_5^{-1}+1}{\aqprod{q^{50}}{q^{50}}{\infty}}
			\sum_{n=-\infty}^\infty \frac{(-1)^nq^{75n(n+1)}}{1-q^{50n+20}}
		\\&\quad
		+
		q^2\frac{\aqprod{q^{50}}{q^{50}}{\infty}}{\jacprod{q^{10}}{q^{50}}}
		+
		2(\zeta_5+\zeta_5^{-1})q^2\frac{\aqprod{q^{50}}{q^{50}}{\infty}\jacprod{q^{15}}{q^{50}}}{\jacprod{q^{10}}{q^{25}}}	
		\\&\quad
		+
		q^4(\zeta_5+\zeta_5^{-1})\frac{\aqprod{q^{50}}{q^{50}}{\infty}}
		{\jacprod{q^{20}}{q^{50}}}	
		-
		2q^4\frac{\aqprod{q^{50}}{q^{50}}{\infty}\jacprod{q^{5}}{q^{50}}}{\jacprod{q^{5}}{q^{25}}}
	,\\
	&(1-\zeta_5)(1-\zeta_5^{-1})S_{C5}(\zeta_5,q)	
	\\
	&=
		\frac{\aqprod{q^{50}}{q^{50}}{\infty}\jacprod{q^{20}}{q^{50}}}
			{\jacprod{q^{10}}{q^{50}}^2}
		-\frac{\aqprod{q^{50}}{q^{50}}{\infty}\jacprod{q^{25}}{q^{50}}}
			{\jacprod{q^{5}}{q^{25}}}
		\\&\quad
		-2(\zeta_5+\zeta_5^{-1})q^5\frac{\aqprod{q^{50}}{q^{50}}{\infty}\jacprod{q^{5}}{q^{50}}}
			{\jacprod{q^{10}}{q^{25}}}	
		-
		(\zeta_5+\zeta_5^4)q^6\frac{\aqprod{q^{50}}{q^{50}}{\infty}\jacprod{q^{10}}{q^{50}}}
			{\jacprod{q^{20}}{q^{50}}^2}
		\\&\quad
		+
		2q\frac{\aqprod{q^{50}}{q^{50}}{\infty}\jacprod{q^{15}}{q^{50}}}
			{\jacprod{q^{5}}{q^{25}}}
		-
		(\zeta_5+\zeta_5^{-1})q\frac{\aqprod{q^{50}}{q^{50}}{\infty}\jacprod{q^{25}}{q^{50}}}
			{\jacprod{q^{10}}{q^{25}}}	
		\\&\quad
		+(\zeta_5+\zeta_5^4-1)q^2\frac{\aqprod{q^{50}}{q^{50}}{\infty}}
			{\jacprod{q^{10}}{q^{50}}}
		-
		2(\zeta_5+\zeta_5^{-1})q^2\frac{\aqprod{q^{50}}{q^{50}}{\infty}\jacprod{q^{15}}{q^{50}}}
			{\jacprod{q^{10}}{q^{25}}}	
		\\&\quad
		-
		(\zeta_5+\zeta_5^4+1)q^4\frac{\aqprod{q^{50}}{q^{50}}{\infty}}
			{\jacprod{q^{20}}{q^{50}}}
		-
		2q^4\frac{\aqprod{q^{50}}{q^{50}}{\infty}\jacprod{q^{5}}{q^{50}}}
			{\jacprod{q^{5}}{q^{25}}}
	,\\
	&S_{E2}(\zeta_3,q)	
	=
		-
		q\frac{\aqprod{q^{18}}{q^{18}}{\infty}\aqprod{q^9}{q^9}{\infty}}
			{\aqprod{q^3}{q^3}{\infty}}
		+
		q^2\frac{\aqprod{q^{18}}{q^{18}}{\infty}^4}
			{\aqprod{q^9}{q^9}{\infty}^2\aqprod{q^6}{q^6}{\infty}}
	,\\
	&S_{E4}(\zeta_3,q)
	=
		\frac{1}{2}
		-
		\frac{\aqprod{q^9}{q^9}{\infty}^4\aqprod{q^6}{q^6}{\infty}}
			{2\aqprod{q^{18}}{q^{18}}{\infty}^2\aqprod{q^3}{q^3}{\infty}^2}
		+
		q^2\frac{\aqprod{q^{18}}{q^{18}}{\infty}}{\aqprod{q^9}{q^9}{\infty}^2}
			\sum_{n=-\infty}^\infty\frac{(-1)^nq^{9n^2+9n}}{1-q^{9n+3}} 
.
\end{align*}
\end{theorem}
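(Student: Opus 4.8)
The plan is to prove each of the four dissections in Theorem~\ref{TheoremAllDissections} by starting from the right identity derived earlier in the paper, specializing $z$ to the appropriate root of unity, and then matching the result against known dissections of rank and crank generating functions. The four cases split naturally into two pairs according to which earlier result is most convenient. For $S_{E2}(\zeta_3,q)$ I would use the product formula for $S_{E2}(z,q)$ in Corollary~\ref{CorollaryTwoVariableProductIdentities}, namely
\[
	S_{E2}(z,q)
	=
		\frac{\aqprod{-zq,-z^{-1}q,q}{q}{\infty}}{\aqprod{z,z^{-1},-q}{q}{\infty}}
		-
		\frac{\aqprod{q^2}{q^2}{\infty}}{\aqprod{z,z^{-1}}{q}{\infty}},
\]
set $z=\zeta_3$, and simplify both theta-quotients using the substitution $q\mapsto q$ together with cubic dissection identities for the relevant eta-quotients (the first term becomes a ratio of infinite products in $q^3,q^6,q^9,q^{18}$ after combining $\aqprod{-\zeta_3 q,-\zeta_3^{-1}q,q}{q}{\infty}$ via $\prod(1-\zeta_3^j x)=1-x^3$–type collapses, and the second term is handled similarly). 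For $S_{C5}(\zeta_5,q)$ I would likewise begin from the product identity
\[
	S_{C5}(z,q)
	=
		\frac{\aqprod{zq,z^{-1}q,q^2}{q^2}{\infty}}{\aqprod{z,z^{-1}}{q}{\infty}}
		-
		\frac{\aqprod{q}{q}{\infty}}{\aqprod{-q,z,z^{-1}}{q}{\infty}},
\]
put $z=\zeta_5$, and then perform a $5$-dissection of each resulting eta-quotient; since $\aqprod{\zeta_5,\zeta_5^{-1}}{q}{\infty}$ absorbs into $\aqprod{q^5}{q^5}{\infty}/\aqprod{q^{25}}{q^{25}}{\infty}$-type factors and the Jacobi theta numerators $5$-dissect by classical formulas, the whole expression reorganizes into the stated combination of $\aqprod{q^{50}}{q^{50}}{\infty}$ and Jacobi products $\jacprod{q^{5j}}{q^{50}}$.

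For the two remaining functions $S_{C1}(\zeta_5,q)$ and $S_{E4}(\zeta_3,q)$, which have no product formula, I would instead invoke the relations to ranks and cranks promised for Section~5. The strategy: establish (as the paper says Section~5 does) an identity of the shape $(1-z)(1-z^{-1})S_{C1}(z,q) = R_{C1}(z,q) - P_{C1}(q)\,C(\text{something})$ expressing $S_{C1}$ through a known rank-type generating function and an infinite-product crank, then specialize $z=\zeta_5$. The $5$-dissection of $C(z,q)$ at $z=\zeta_5$ is classical (Garvan), and the $5$-dissection of the rank piece $R_{C1}(\zeta_5,q)$ must be computed; assembling these two dissections term by term, collecting powers of $q$ modulo $5$ (equivalently, modulo $50$ after the change of variables that the $\aqprod{q^{50}}{q^{50}}{\infty}$ denominators signal), yields the displayed formula. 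The $\sum_n (-1)^n q^{75n(n+1)}/(1-q^{50n+c})$ terms are exactly the non-theta ``rank-like'' pieces that survive. For $S_{E4}(\zeta_3,q)$ the same scheme applies with the overpartition rank $\overline{R}$ in place, using its known $3$-dissection at $\zeta_3$ and the $3$-dissection of the overpartition crank; here $\aqprod{-q^{n+1}}{q}{\infty}$ in the defining series signals that $\overline{p}(n)/2$-type constant and the $1/2$ appearing in the stated formula, consistent with the earlier remark $\sptA{E4}{n}=\sptBar{}{n}-\overline{p}(n)/2$.

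In all four cases the skeleton is the same: take the cleanest earlier identity (product formula where available, rank-minus-crank relation otherwise), substitute the root of unity, and reduce to a finite list of theta-function dissection identities plus — for $C1$ and $E4$ — one Lambert-series ``universal mock theta''-type piece from the rank. The verification that the collected coefficients match is a bounded computation: after clearing $\aqprod{q}{q}{\infty}$ (or its dilate) one is comparing two $q$-series whose building blocks are all Jacobi theta functions of known modulus, so equality follows by checking the finitely many residue classes and using the Jacobi triple product to convert each class back to a single theta quotient.

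**The main obstacle** I anticipate is the $S_{C1}(\zeta_5,q)$ dissection: it is by far the most intricate of the four (a dozen distinct terms, with $\zeta_5+\zeta_5^{-1}$ coefficients appearing in several different combinations, plus two separate Lambert-series terms with shifts $10$ and $20$ modulo $50$), and it requires not merely a standard crank dissection but the full $5$-dissection of the rank-type function $R_{C1}(\zeta_5,q)$ attached to the Bailey pair $C1$ — a function whose modular properties are not off-the-shelf. Pinning down that rank dissection, and then correctly bookkeeping which of the five residue classes mod $5$ contributes which theta quotient (so that the three congruence-relevant classes collapse and the class $q^{5n+3}$ vanishes, giving $\sptA{C1}{5n+3}\equiv 0\pmod 5$), is where the real work lies; the $E2$, $E4$, and $C5$ cases are comparatively mechanical once the corresponding classical rank/crank dissections are quoted.
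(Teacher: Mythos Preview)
Your overall plan---specialize an earlier identity to a root of unity and then assemble known dissections---matches the paper's strategy in spirit, but the execution differs in a significant way, and your ``main obstacle'' turns out to be a phantom.

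The paper does \emph{not} split the four cases into ``product formula available'' versus ``rank-minus-crank needed.'' Instead it treats all four uniformly via Proposition~\ref{TheoremRankCrankDifferences}, which comes from applying Bailey's Lemma (with $\rho_1=z$, $\rho_2=z^{-1}$) to each Bailey pair:
\begin{align*}
(1-z)(1-z^{-1})S_{C1}(z,q)&=R(z,q^2)-\aqprod{q}{q^2}{\infty}C(z,q),\\
(1-z)(1-z^{-1})S_{C5}(z,q)&=C(z,q^2)-\aqprod{q}{q^2}{\infty}C(z,q),\\
(1-z)(1-z^{-1})S_{E2}(z,q)&=R_1(z,q)-\aqprod{-q}{q}{\infty}C(z,q),\\
(1-z)(1-z^{-1})S_{E4}(z,q)&=R_2(z,q)-\aqprod{-q}{q}{\infty}C(z,q).
\end{align*}
Here $R$ and $C$ are the \emph{ordinary} partition rank and crank, $R_1$ was handled in \cite{GarvanJennings}, and $R_2$ is shown to satisfy $R_2(z,q)=\tfrac{1}{2}\bigl((1-z)(1-z^{-1})+(z+z^{-1})\overline{R}(z,q)\bigr)$, tying it to the overpartition rank. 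So for $S_{E2}$ and $S_{C5}$ the paper never touches the product formulas of Corollary~\ref{CorollaryTwoVariableProductIdentities}; your route via those products could plausibly be made to work, but would require you to $3$- and $5$-dissect the resulting eta-quotients from scratch, whereas the paper's route reduces everything to dissections already in the literature.

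This also dissolves your anticipated obstacle for $S_{C1}$. The ``rank-type function $R_{C1}$'' you worry about is not exotic at all: it is literally $R(z,q^2)$, the classical Dyson rank generating function with $q\mapsto q^2$. Its $5$-dissection at $z=\zeta_5$ is the Atkin--Swinnerton-Dyer result (equation~(\ref{EquationRank5Dissection}) with $q\mapsto q^2$), which is exactly why the Lambert terms in the $C1$ formula have the shape $\sum(-1)^n q^{75n(n+1)}/(1-q^{50n+c})$. The only genuinely new dissection the paper must establish is that of $\aqprod{q}{q^2}{\infty}C(\zeta_5,q)$, done by multiplying the $5$-dissection of $\sum(-1)^n q^{n^2}=\aqprod{q}{q^2}{\infty}\aqprod{q}{q}{\infty}$ against the known $5$-dissection of $1/\aqprod{\zeta_5 q,\zeta_5^{-1}q}{q}{\infty}$. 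Once these pieces are in hand, Theorem~\ref{TheoremAllDissections} is a subtraction, with no residual modular-function identity to verify.
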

From these identities we see $S_{C1}(\zeta_5,q)$ has no $q^{5n+3}$ terms,
$S_{C5}(\zeta_5,q)$ has no $q^{5n+3}$ terms,
$S_{E2}(\zeta_3,q)$ has no $q^{3n}$ terms, and 
$S_{E4}(\zeta_3,q)$ has no $q^{3n+1}$ terms. This gives another proof of the
congruences for $\sptA{C5}{5n+3}$, $\sptA{E2}{3n}$, and
$\sptA{E4}{3n+1}$, and the only proof of the congruence for 
$\sptA{C1}{5n+3}$. The dissection from \cite{JS} gives the only proof of the
congruence for $\sptA{A3}{5n+1}$.

We note that Theorems \ref{TheoremFinalSeriesIdentities} and 
\ref{TheoremAllDissections} both prove the congruences, but the identities
are inherently different. In Theorem \ref{TheoremFinalSeriesIdentities}
we have identities that are valid for all $z$, but it may be difficult to 
identify all terms in the $\ell$-dissection when $z=\zeta_\ell$. In
Theorem \ref{TheoremAllDissections}, the identities are for $z$ being a fixed
root of unity and all terms in the dissection are explicitly determined.
The dissection formulas come as a consequence of each $S_X(z,q)$ being the 
difference of a rank-type function and a crank-type function.
A consequence of the proofs of Theorem \ref{TheoremFinalSeriesIdentities} and
Corollary \ref{CorollaryHeckeDoubleSeries} are the following product 
identities.
\begin{corollary}\label{CorollaryAllSingleSeriesProducts}
\begin{align}
	\label{CorollaryProduct1}
	\aqprod{q}{q}{\infty}^2
	&=
		\sum_{k=1}^\infty
		\left( (-1)^{k+1}q^{k(k-1)/2} +
			\sum_{n=1}^\infty (-1)^{n+k+1} q^{\frac{k(k-1)}{2} + \frac{n(n-3)}{2} + 2kn}(1+q^n)
		\right)
	,\\
	\label{CorollaryProduct2}
	\aqprod{q}{q}{\infty}^2
	&=
		\sum_{k=1}^\infty \sum_{n=0}^\infty
			(-1)^{n+k+1} q^{\frac{k(k+1)}{2} + \frac{n(n-3)}{2} + 2kn - 1}(1-q^{2n+1})
	,\\
	\label{CorollaryProduct3}
	\aqprod{q}{q}{\infty}^2\aqprod{q}{q^2}{\infty}
	&=
		\sum_{k=1}^\infty\sum_{n=0}^\infty
		(-1)^{k+1} q^{\frac{k(k-1)}{2} + \frac{n(3n-1)}{2} + 3kn}(1-q^{2k-1})(1-q^{k+n})
	,\\
	\label{CorollaryProduct4}
	\aqprod{q}{q}{\infty}\aqprod{q^2}{q^2}{\infty}
	&=
		\sum_{k=1}^\infty\sum_{n=0}^\infty
		(-1)^{k+n+1} q^{\frac{k(k+1)}{2} + n^2-n +2kn -1}(1-q^{2n+1})
	,\\
	\label{CorollaryProduct5}
	\aqprod{q}{q}{\infty}^2\aqprod{q}{q^2}{\infty}
	&=
	\sum_{k=1}^\infty\sum_{n=-[(k-1)/3]}^{[k/3]}
		(-1)^{n+k+1}q^{\frac{k^2-k}{2}-3n^2+n}(1-q^k)
	,\\
	\label{CorollaryProduct6}
	\aqprod{q}{q}{\infty}{\aqprod{q^2}{q^2}{\infty}}
	&=
		\sum_{k=0}^\infty
		\sum_{n=-[k/2]}^{[k/2]}
		(-1)^{n+k}q^{\frac{k^2+k}{2}-n^2}
.
\end{align}
\end{corollary}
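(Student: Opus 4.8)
The plan is to obtain the six identities from the proofs of Theorem~\ref{TheoremFinalSeriesIdentities} and Corollary~\ref{CorollaryHeckeDoubleSeries} given in Section~3, rather than from their statements. This is necessary, because setting $z=1$ in any of the two-variable identities (or dividing by $(1-z)(1-z^{-1})$ and letting $z\to1$) only reintroduces the non-product factor $S_X(1,q)=S_X(q)$; the product identities sit one step earlier in the argument.

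First I would apply the limiting case of Bailey's Lemma with $\rho_1=z$, $\rho_2=z^{-1}$, $a=1$ to the Bailey pair $(\alpha^X,\beta^X)$. Using $\aqprod{z,z^{-1}}{q}{n}=(1-z)(1-z^{-1})\aqprod{zq,z^{-1}q}{q}{n-1}$ for $n\ge1$, the normalizations $\beta_0^X=\alpha_0^X=1$, and the elementary identity $\frac{\aqprod{z,z^{-1}}{q}{n}}{\aqprod{zq,z^{-1}q}{q}{n}}=\frac{(1-z)(1-z^{-1})}{(1-zq^n)(1-z^{-1}q^n)}$, one arrives at
\begin{align*}
\aqprod{z,z^{-1}}{q}{\infty}S_X(z,q)
&= -P_X(q)
+\frac{P_X(q)\,\aqprod{zq,z^{-1}q}{q}{\infty}}{\aqprod{q}{q}{\infty}^2}
+\frac{P_X(q)\,\aqprod{z,z^{-1}}{q}{\infty}}{\aqprod{q}{q}{\infty}^2}
\sum_{n=1}^\infty\frac{q^n\alpha_n^X}{(1-zq^n)(1-z^{-1}q^n)},
\end{align*}
where the relation $(1-z)(1-z^{-1})\aqprod{zq,z^{-1}q}{q}{\infty}=\aqprod{z,z^{-1}}{q}{\infty}$ was used to recast the third term.

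Next I would multiply by $1+z$ and compare with the right-hand side of the matching identity in Theorem~\ref{TheoremFinalSeriesIdentities}. In each such right-hand side the coefficient of the summation variable splits off a piece that does not involve that variable: one has $(1-z^{k-1})(1-z^k)z^{1-k}=z^{1-k}+z^k-(1+z)$ in \eqref{FinalA1Series}, \eqref{FinalA3Series}, \eqref{FinalC1Series}, \eqref{FinalE4Series}. The portion built from $z^{1-k}+z^k$ is exactly what the second and third terms of the display produce once $\frac{1}{(1-zq^n)(1-z^{-1}q^n)}$ is expanded in partial fractions and geometric series and the Jacobi triple product is applied to the quadratic exponents appearing in $\alpha^X$; establishing this matching is precisely the Section~3 proof of the two-variable identity. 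Hence the remaining, variable-independent, portion must reproduce the first term $-(1+z)P_X(q)$ of the display, which after cancelling $1+z$ and clearing the factor $\aqprod{q}{q}{\infty}^{-1}$ reads $\aqprod{q}{q}{\infty}P_X(q)=\sum_{k\ge1}(\cdots)$, the sum being the right-hand side of the corresponding two-variable identity with $(1-z^{k-1})(1-z^k)z^{1-k}$ suppressed. This is \eqref{CorollaryProduct1} for $X=A1$ (where $P_{A1}(q)=\aqprod{q}{q}{\infty}$), and likewise \eqref{CorollaryProduct2}, \eqref{CorollaryProduct3}, \eqref{CorollaryProduct4} for $X=A3$, $C1$, $E4$, on using $\aqprod{q}{q}{\infty}P_{C1}(q)=\aqprod{q}{q}{\infty}^2\aqprod{q}{q^2}{\infty}$ and $\aqprod{q}{q}{\infty}P_{E4}(q)=\aqprod{q}{q}{\infty}\aqprod{q^2}{q^2}{\infty}$. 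The identities \eqref{CorollaryProduct5} and \eqref{CorollaryProduct6} are the alternative double-sum forms of $\aqprod{q}{q}{\infty}^2\aqprod{q}{q^2}{\infty}$ and $\aqprod{q}{q}{\infty}\aqprod{q^2}{q^2}{\infty}$ obtained by applying to \eqref{CorollaryProduct3} and \eqref{CorollaryProduct4} the Hecke--Rogers reindexing carried out in Section~3 to derive \eqref{HeckeDoubleC1Series} and \eqref{HeckeDoubleE4Series} from \eqref{FinalC1Series} and \eqref{FinalE4Series}. For the Bailey pairs $A5$, $A7$, $C5$, $E2$, whose $S_X(z,q)$ are products, the same extraction only reproduces classical results (Euler's pentagonal number theorem for $A5$ and $A7$, the Jacobi triple product for $C5$, Gauss's identity for $E2$), which is why they do not appear here.

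I expect the difficulty to be entirely in the bookkeeping: one must identify, inside the long reindexed Bailey sum for each pair, which monomials in $z$ belong to the $z^{1-k}+z^k$ part and which to the variable-independent part, and, for \eqref{CorollaryProduct5} and \eqref{CorollaryProduct6}, carry out the $n\mapsto-n$ symmetrization that collapses the four double sums of \eqref{HeckeDoubleC1Series} (respectively the two of \eqref{HeckeDoubleE4Series}) into a single double sum over the stated ranges $-[(k-1)/3]\le n\le[k/3]$ (respectively $-[k/2]\le n\le[k/2]$) with the correct signs. No new idea beyond those already used in proving Theorem~\ref{TheoremFinalSeriesIdentities} and Corollary~\ref{CorollaryHeckeDoubleSeries} is needed; as a check, each of the six identities is readily verified to several orders in $q$.
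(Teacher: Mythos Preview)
Your proposal contains a significant misconception and, as written, is circular.

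You open by asserting that setting $z=1$ in the two-variable identities ``only reintroduces the non-product factor $S_X(1,q)$,'' and therefore turn to an auxiliary Bailey display. But this is only true of Theorem~\ref{TheoremFinalSeriesIdentities}. The paper's actual proof of \eqref{CorollaryProduct1}--\eqref{CorollaryProduct4} sets $z=1$ in \emph{Proposition~\ref{PropositionFirstDoubleSeries}} (equations \eqref{FirstA1Series}--\eqref{FirstE4Series}), whose left-hand side $(1+z)\aqprod{z,z^{-1}}{q}{\infty}S_X(z,q)=(1+z)P_X(q)\sum_{n\ge1}\aqprod{z,z^{-1}}{q}{n}q^n\beta_n^X$ vanishes identically at $z=1$ since each finite product $\aqprod{1,1}{q}{n}$ is zero. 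The right-hand side at $z=1$ reads $-2P_X(q)+\tfrac{2}{\aqprod{q}{q}{\infty}}\sum_{k\ge1}F_k(q)$, and equating this to zero gives the product identity directly. No Bailey display, partial fractions, or matching of $z$-pieces is needed.

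Your route instead equates the Bailey display with the right-hand side of Theorem~\ref{TheoremFinalSeriesIdentities} and then asserts that the ``$z^{1-k}+z^k$ portion'' of the latter equals the Bailey terms, saying ``establishing this matching is precisely the Section~3 proof.'' This is incorrect on two counts. First, the decomposition $(z^{1-k}+z^k)-(1+z)$ is not unique (note $1+z=z^0+z^1$), so one cannot simply equate the pieces; the matching is a genuine identity requiring proof. Second, Section~3 does \emph{not} prove it by expanding $\frac{1}{(1-zq^n)(1-z^{-1}q^n)}$ in partial fractions---your Bailey display is the Section~5 device (Proposition~\ref{TheoremRankCrankDifferences}), not the Section~3 one, which instead uses the finite expansion \eqref{Prop4.1} and the conjugate Bailey pair identities \eqref{LemmaBailey1}--\eqref{LemmaBailey7}. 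Worse, in the paper's logical order Theorem~\ref{TheoremFinalSeriesIdentities} is \emph{deduced from} Proposition~\ref{PropositionFirstDoubleSeries} together with \eqref{CorollaryProduct1}--\eqref{CorollaryProduct4}; taking Theorem~\ref{TheoremFinalSeriesIdentities} as input to prove those same product identities is circular.

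For \eqref{CorollaryProduct5} and \eqref{CorollaryProduct6} your sketch (``apply the Hecke--Rogers reindexing to \eqref{CorollaryProduct3} and \eqref{CorollaryProduct4}'') is closer in spirit, but the paper does not do this either: it runs the reindexing on the $z^{1-k}+z^k$ portion of \eqref{FirstC1Series} and \eqref{FirstE4Series}, not on the one-variable identities already obtained, and a nontrivial boundary correction (e.g.\ the extra $\sum_n q^{(3n^2-n)/2}(1-q^n)$ in the $C1$ case) has to be tracked and cancelled before the range $-[(k-1)/3]\le n\le[k/3]$ emerges.
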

We note that (\ref{CorollaryProduct6}) is known. It was derived
by Andrews in \cite{Andrews1}, Bressoud in \cite{Bressoud},
and recently reproved by the first author in \cite{Garvan3}.
The authors would like to thank Mortenson for pointing out that one can also obtain these 
corollaries by using
Equation (1.7) and Theorem 1.4
of \cite{HickersonMortenson}.
Some consequences of the proof of Theorem \ref{TheoremAllDissections} 
are the following relations between $S_{C1}(z,q)$ and $S_{C5}(z,q)$.
\begin{corollary}\label{CorollaryC1AndC5Identities}
For all $n$ and $m$,
\begin{align*}
	N_S(m,n) &= M_{C1}(m,2n) - M_{C5}(m,2n)
	,\\
	M_{C1}(m,2n+1) &= M_{C5}(m,2n+1)
	,\\
	\spt{n} &=  \sptA{C1}{2n}-\sptA{C5}{2n}
	,\\
	\sptA{C1}{2n+1} &= \sptA{C5}{2n+1}
.
\end{align*}
\end{corollary}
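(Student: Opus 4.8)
The plan is to reduce all four assertions to the single two‑variable identity
\begin{equation}\label{EqSketchC1C5}
	S_{C1}(z,q) - S_{C5}(z,q) = S(z,q^2).
\end{equation}
Granting \eqref{EqSketchC1C5}, everything follows by comparing coefficients. Since $S(z,q^2)=\sum_{n=1}^\infty\sum_{m=-\infty}^\infty N_S(m,n)z^mq^{2n}$, the coefficient of $z^mq^{2n}$ on the right of \eqref{EqSketchC1C5} is $N_S(m,n)$ and the coefficient of $z^mq^{2n+1}$ is $0$; matching these against $S_{C1}(z,q)-S_{C5}(z,q)=\sum_{n=1}^\infty\sum_{m=-\infty}^\infty\bigl(M_{C1}(m,n)-M_{C5}(m,n)\bigr)z^mq^n$ gives the first two displayed equalities (for fixed degree in $q$ all the $z$-sums are finite, so the two statements are genuinely equivalent to \eqref{EqSketchC1C5}). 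Summing those two equalities over all $m$ and using $\sum_m N_S(m,n)=\spt{n}$, $\sum_m M_{C1}(m,n)=\sptA{C1}{n}$, and $\sum_m M_{C5}(m,n)=\sptA{C5}{n}$ yields the last two; in particular this is precisely the promised justification of the defining relation $\sptA{C5}{n}=\sptA{C1}{n}-\spt{n/2}$.

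So it remains to prove \eqref{EqSketchC1C5}, and I would obtain it as a by‑product of the rank--crank decompositions set up in the proof of Theorem \ref{TheoremAllDissections}. Applying the limiting case of Bailey's Lemma that produces \eqref{EqIntro1}, one writes
\begin{align*}
	(1-z)(1-z^{-1})S_{C1}(z,q) &= R_{C1}(z,q) - K_{C1}(z,q),
	\\
	(1-z)(1-z^{-1})S_{C5}(z,q) &= R_{C5}(z,q) - K_{C5}(z,q),
\end{align*}
where each $R_{Cj}(z,q)$ is a rank‑type series and each $K_{Cj}(z,q)$ is a crank‑type series (an infinite product divided by $\aqprod{z,z^{-1}}{q}{\infty}$). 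Subtracting and rewriting the right‑hand sides in base $q^2$ --- splitting Pochhammer symbols via $\aqprod{z}{q}{\infty}=\aqprod{z}{q^2}{\infty}\aqprod{zq}{q^2}{\infty}$ and invoking the familiar $2$‑dissections of the ordinary rank and crank --- I expect the rank parts to collapse to $R(z,q^2)$ and the crank parts to collapse to $C(z,q^2)$, up to a harmless redistribution of terms between the two pieces. Then \eqref{EqIntro1} with $q$ replaced by $q^2$ gives
\begin{align*}
	(1-z)(1-z^{-1})\bigl(S_{C1}(z,q) - S_{C5}(z,q)\bigr)
	= R(z,q^2) - C(z,q^2)
	= (1-z)(1-z^{-1})S(z,q^2),
\end{align*}
and cancelling the factor $(1-z)(1-z^{-1})$ --- legitimate, since these are identities of formal power series in $q$ with coefficients in $\mathbb{Q}(z)$, equivalently identities of Laurent polynomials in $z$ after clearing denominators --- yields \eqref{EqSketchC1C5}.

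A second, more self‑contained route bypasses Section 5: subtract \eqref{C5Series} from \eqref{FinalC1Series} in Theorem \ref{TheoremFinalSeriesIdentities} to obtain a closed form for $(1+z)\aqprod{z,z^{-1}}{q}{\infty}\bigl(S_{C1}(z,q)-S_{C5}(z,q)\bigr)$, and match it against $(1+z)\aqprod{z,z^{-1}}{q}{\infty}S(z,q^2)$, computed from the Hecke--Rogers‑type double sum for $S(z,q)$ recorded in the introduction with $q\mapsto q^2$. Whichever route one takes, the main obstacle is the same: after the subtraction one must verify that the surviving series is honestly a series in $q^2$ whose $q^{2n}$‑coefficients are exactly the $N_S(m,n)$ --- equivalently, that every ``base $q$, not base $q^2$'' contribution cancels. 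That is the only step that is not purely formal; once the base‑$q^2$ structure has been exposed, identifying the result with the known double‑sum/product representation of the ordinary two‑variable spt‑function is routine, and the final cancellation of $(1-z)(1-z^{-1})$ (or of $(1+z)$ together with the Pochhammer factor) is harmless.
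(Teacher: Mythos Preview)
Your strategy is exactly the paper's: subtract the two rank--crank decompositions in Proposition~\ref{TheoremRankCrankDifferences} and apply \eqref{EqIntro1} with $q\mapsto q^2$ to obtain $S_{C1}(z,q)-S_{C5}(z,q)=S(z,q^2)$, then compare coefficients. The only correction is that the subtraction is far simpler than you anticipate: in the paper's explicit decompositions the crank-type pieces $K_{C1}$ and $K_{C5}$ are \emph{identical} (both equal $\aqprod{q}{q^2}{\infty}C(z,q)$) and cancel outright, while the ``rank-type'' pieces are already $R(z,q^2)$ and $C(z,q^2)$ respectively---so no Pochhammer splitting, no $2$-dissections of rank or crank, and no redistribution of terms are needed, and your alternative route via Theorem~\ref{TheoremFinalSeriesIdentities} is unnecessary.
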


In Section 3 we prove the series identities of Theorem \ref{TheoremFinalSeriesIdentities}
as well as Corollaries \ref{CorollaryHeckeDoubleSeries}, 
\ref{CorollaryTwoVariableProductIdentities}, and
\ref{CorollaryAllSingleSeriesProducts}.
In Section 4 we use Theorem \ref{TheoremFinalSeriesIdentities} to prove the 
appropriate terms are zero to deduce the congruences in Theorem 
\ref{TheoremCongruences}.
In Section 5 we prove the dissection formulas of Theorem 
\ref{TheoremAllDissections}.

%%%%%%%%%%%%%%%%%%%%%%%%%%%%%%%%%%%%%%%%%%%%%%%%%%%%%%%%%%%%%%%%%%%%%%%%%%%%%%

\section{Proof of the Series Identities}

To prove the identities for $S_{A1}(z,q)$, $S_{A3}(z,q)$,
$S_{C1}(z,q)$, and $S_{E4}(z,q)$ we need the following preliminary result. 
\begin{proposition}\label{PropositionFirstDoubleSeries}
\begin{align}
	\label{FirstA1Series}
	&(1+z)\aqprod{z,z^{-1}}{q}{\infty}
	S_{A1}(z,q)
		\nonumber\\&
		=
		-(1+z)\aqprod{q}{q}{\infty}
		\nonumber\\&
		+
		\frac{1}{\aqprod{q}{q}{\infty}}
		\sum_{k=1}^\infty (z^{k} + z^{-k+1})
		\left( (-1)^{k+1}q^{k(k-1)/2}
			%\right.\nonumber\\&\left.\quad
			+
			\sum_{n=1}^\infty (-1)^{n+k+1} q^{\frac{k(k-1)}{2} + \frac{n(n-3)}{2} + 2kn}(1+q^n)
		\right)
	,\\
	&(1+z)\aqprod{z,z^{-1}}{q}{\infty}
	S_{A3}(z,q)
		\nonumber\\\label{FirstA3Series}&
		=
		-(1+z)\aqprod{q}{q}{\infty}
		\nonumber\\&\quad
		+
		\frac{1}{\aqprod{q}{q}{\infty}}
		\sum_{k=1}^\infty \sum_{n=0}^\infty
			(z^{k} + z^{-k+1})(-1)^{n+k+1} q^{\frac{k(k+1)}{2} + \frac{n(n-3)}{2} + 2kn - 1}(1-q^{2n+1})
	,\\
	\label{FirstC1Series}
	&(1+z)\aqprod{z,z^{-1}}{q}{\infty}
	S_{C1}(z,q)
		\nonumber\\&
		=
		-(1+z)\aqprod{q}{q}{\infty}\aqprod{q}{q^2}{\infty}
		\nonumber\\&\quad
		+
		\frac{1}{\aqprod{q}{q}{\infty}}
		\sum_{k=1}^\infty\sum_{n=0}^\infty
		(z^{k}+z^{1-k})(-1)^{k+1} q^{\frac{k(k-1)}{2} + \frac{n(3n-1)}{2} + 3kn}(1-q^{2k-1})(1-q^{k+n})
	,\\
	&(1+z)\aqprod{z,z^{-1}}{q}{\infty}
	S_{E4}(z,q)
		\nonumber\\\label{FirstE4Series}
		&=
		-(1+z)\aqprod{q^2}{q^2}{\infty}
		\nonumber\\&\quad
		+
		\frac{1}{\aqprod{q}{q}{\infty}}
		\sum_{k=1}^\infty\sum_{n=0}^\infty
		(z^k+z^{1-k})(-1)^{k+n+1} q^{\frac{k(k+1)}{2} + n^2-n +2kn -1}(1-q^{2n+1})
	.
\end{align}
\end{proposition}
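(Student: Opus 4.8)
The plan is to obtain all four identities of Proposition~\ref{PropositionFirstDoubleSeries} from a single limiting case of Bailey's Lemma, followed by substitution of the explicit $\alpha^X$ and a reindexing. The input is the strong form of Bailey's Lemma applied with the two free parameters taken to be $z$ and $z^{-1}$: for a Bailey pair $(\alpha,\beta)$ relative to $(1,q)$ one has $aq/\rho_1\rho_2=q$, $aq/\rho_1=z^{-1}q$, $aq/\rho_2=zq$, and simplifying $\aqprod{z^{-1}q,zq}{q}{\infty}=\aqprod{z,z^{-1}}{q}{\infty}\big/\big((1-z)(1-z^{-1})\big)$ together with $\aqprod{z,z^{-1}}{q}{n}\big/\aqprod{z^{-1}q,zq}{q}{n}=(1-z)(1-z^{-1})\big/\big((1-zq^n)(1-z^{-1}q^n)\big)$ gives
\begin{align*}
	\sum_{n\ge0}\aqprod{z,z^{-1}}{q}{n}q^n\beta_n
	=\frac{\aqprod{z,z^{-1}}{q}{\infty}}{\aqprod{q}{q}{\infty}^2}
	\sum_{n\ge0}\frac{q^n\alpha_n}{(1-zq^n)(1-z^{-1}q^n)} .
\end{align*}
Since $\beta^X_0=1$ and $\sum_{n\ge1}\aqprod{z,z^{-1}}{q}{n}q^n\beta^X_n=\aqprod{z,z^{-1}}{q}{\infty}S_X(z,q)/P_X(q)$, multiplying by $(1+z)P_X(q)$ rearranges this to
\begin{align*}
	(1+z)\aqprod{z,z^{-1}}{q}{\infty}S_X(z,q)
	=-(1+z)P_X(q)
	+\frac{(1+z)P_X(q)\aqprod{z,z^{-1}}{q}{\infty}}{\aqprod{q}{q}{\infty}^2}
	\sum_{n\ge0}\frac{q^n\alpha^X_n}{(1-zq^n)(1-z^{-1}q^n)}
\end{align*}
for each $X\in\{A1,A3,C1,E4\}$ (with $P_{A1}=P_{A3}=\aqprod{q}{q}{\infty}$, $P_{C1}=\aqprod{q}{q^2}{\infty}\aqprod{q}{q}{\infty}$, $P_{E4}=\aqprod{q^2}{q^2}{\infty}$). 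The term $-(1+z)P_X(q)$ already matches the stated one, so the content of the proposition is the evaluation of the remaining sum.

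For that evaluation I would substitute Slater's explicit $\alpha^X$, splitting the sum over $n$ according to $n\bmod 3$ for $A1$ and $A3$, according to the parity of $n$ for $C1$ (the odd terms vanish), and leaving it single-indexed for $E4$, keeping the constant $\alpha^X_0=1$ separate from the $m=0$ cases of the residue formulas. Next I would expand the rational factor by $\frac{q^n}{(1-zq^n)(1-z^{-1}q^n)}=\frac{1}{z^{-1}-z}\sum_{j\ge1}(z^{-j}-z^j)q^{nj}$ for $n\ge1$, and expand the prefactor by the Jacobi Triple Product $\aqprod{z,z^{-1}q,q}{q}{\infty}=\sum_{\ell}(-1)^\ell z^\ell q^{\ell(\ell-1)/2}$ (so $\aqprod{z,z^{-1}}{q}{\infty}$ becomes $\tfrac{1-z^{-1}}{\aqprod{q}{q}{\infty}}\sum_\ell(-1)^\ell z^\ell q^{\ell(\ell-1)/2}$). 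Here the $(1+z)$ normalization pays off: since $z^{-1}-z=z^{-1}(1-z)(1+z)$, the combination $(1+z)(1-z^{-1})/(z^{-1}-z)$ collapses to $-1$. Interchanging the (bilateral) $z$-sums, extracting the coefficient of each power of $z$, and reindexing the surviving $q$-sums then produces a double series; the symmetry $S_X(z,q)=S_X(z^{-1},q)$ forces the $z$-expansion to satisfy $c_k=c_{1-k}$, which is exactly why the coefficients pair as $z^k+z^{1-k}$. A few ordinary theta evaluations and the pentagonal number theorem $\sum_k(-1)^kq^{k(3k+1)/2}=\aqprod{q}{q}{\infty}$ are used to identify the products that remain.

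I expect the main obstacle to be the bookkeeping of this collapse rather than any isolated hard step: three running indices---the power of $z$, the parameter in $\alpha^X$, and the geometric-series index from the rational factor---have to be reduced to the two appearing in the statement, which forces several compensating index shifts and careful tracking of the summation limits and of the boundary contributions from $\alpha^X_0=1$. A related subtlety is that $\aqprod{z,z^{-1}}{q}{\infty}$ and $(1-zq^n)^{-1}(1-z^{-1}q^n)^{-1}$ are genuinely bilateral in $z$, so all rearrangements must be read as identities of formal Laurent series in $z$ over $\mathbb{Z}[[q]]$, and one must check that every $z$-dependent infinite product cancels, leaving exactly the prefactor recorded in the statement. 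As noted in the introduction, an alternative to carrying this out by hand is to invoke Equation~(1.7) and Theorem~1.4 of \cite{HickersonMortenson}, which package precisely this bilateral-series manipulation, and I would use that as a cross-check. Once Proposition~\ref{PropositionFirstDoubleSeries} is proved, the corresponding identities in Theorem~\ref{TheoremFinalSeriesIdentities} follow immediately from $(1-z^{k-1})(1-z^k)z^{1-k}=z^k+z^{1-k}-1-z$ together with the product identities of Corollary~\ref{CorollaryAllSingleSeriesProducts}, which absorb the $-(1+z)P_X(q)$ term.
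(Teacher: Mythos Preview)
Your approach is genuinely different from the paper's. You apply Bailey's Lemma with $\rho_1=z$, $\rho_2=z^{-1}$ directly to the Slater pair $(\alpha^X,\beta^X)$, obtaining the Appell--Lerch form $\aqprod{z,z^{-1}}{q}{\infty}\sum_{n}\frac{q^n\alpha^X_n}{(1-zq^n)(1-z^{-1}q^n)}$; this is precisely what the paper does in Section~5 to derive the rank--crank differences of Proposition~\ref{TheoremRankCrankDifferences}, not Proposition~\ref{PropositionFirstDoubleSeries}. The paper's proof of Proposition~\ref{PropositionFirstDoubleSeries} instead starts from the \emph{finite} expansion (\ref{Prop4.1}) (Proposition~4.1 of \cite{Garvan3}) for $(1+z)\aqprod{z,z^{-1}}{q}{n}$, which already isolates the coefficient of each $z^j$; for fixed $j$ that coefficient is $\sum_{n}q^n\beta_n(q^{2j-1})$ for the \emph{trivial} Bailey pair (\ref{FirstBaileyPair})--(\ref{FirstBaileyPairAlpha}), and the conjugate Bailey pair identity (\ref{LemmaBailey4}) from \cite{Lovejoy3} evaluates it immediately as a single $q$-series, giving (\ref{ProofA1SeriesCoeff2AndUp}). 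The analogous cases use (\ref{LemmaBailey5})--(\ref{LemmaBailey7}). So the paper changes Bailey pairs (to the trivial one, with varying $a$) and uses conjugate-pair identities, whereas you keep the original pair and must then collapse a product of a theta and an Appell--Lerch sum.

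That collapse is where your sketch is thin. What you call ``reindexing'' is in fact the passage from an Appell--Lerch series times a theta to a Hecke--Rogers double sum; the $n=0$ term alone contributes $(1+z)\aqprod{zq,z^{-1}q}{q}{\infty}/\aqprod{q}{q}{\infty}$, which after your Jacobi expansion carries a factor $(1+z)/(1-z^{-1})$ that does not reduce to a Laurent polynomial in $z$ on its own---it only becomes polynomial after combination with the $n\ge1$ terms. Your reference to Equation~(1.7) and Theorem~1.4 of \cite{HickersonMortenson} is the right remedy, but then that machinery is doing the essential work, not bookkeeping. Both routes are valid; the paper's is shorter and stays entirely within Bailey-pair identities, while yours routes through Section~5 and then needs an external Hecke-type input.
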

From this we deduce equations
(\ref{CorollaryProduct1}), (\ref{CorollaryProduct2}), (\ref{CorollaryProduct3}),
and (\ref{CorollaryProduct4})
of Corollary \ref{CorollaryAllSingleSeriesProducts}. The
product identities of Corollary \ref{CorollaryAllSingleSeriesProducts} 
along with Proposition \ref{PropositionFirstDoubleSeries}
imply equations
(\ref{FinalA1Series}), (\ref{FinalA3Series}), (\ref{FinalC1Series}), and
(\ref{FinalE4Series})
of Theorem \ref{TheoremFinalSeriesIdentities}. Equations
(\ref{A5Series}), (\ref{A7Series}), (\ref{C5Series}), and (\ref{E2Series}) do
not require this additional step, instead they only require special cases of
the Jacobi Triple Product Identity.

The proofs of the identities in Theorem \ref{TheoremFinalSeriesIdentities}
and Proposition \ref{PropositionFirstDoubleSeries}
are to 
verify the coefficients of each power of $z$ match. We do this by 
rearranging $S_X(z,q)$, extracting the coefficient of $z^k$, which is a series
in $q$, and then
using one of two general Bailey pairs with either a limiting case of Bailey's Lemma
or an identity from Bailey's Transform with a suitable conjugate Bailey pair.
We recall that $(\alpha,\beta)$ form a Bailey pair relative to $(a,q)$ if
\begin{align*}
	\beta_n &= \sum_{k=0}^n \frac{\alpha_k}{\aqprod{aq}{q}{n+k}\aqprod{q}{q}{n-k}}
\end{align*}
and Bailey's Lemma, which can be found in \cite{Bailey}, gives if 
$(\alpha,\beta)$ is a Bailey pair relative to $(a,q)$ then
\begin{align*}
	\sum_{n=0}^\infty \aqprod{\rho_1,\rho_2}{q}{n} 
		\left(\frac{aq}{\rho_1\rho_2} \right)^n \beta_n
	&=
	\frac{\aqprod{aq/\rho_1,aq/\rho_2}{q}{\infty}}{\aqprod{aq,aq/\rho_1\rho_2}{q}{\infty}}
	\sum_{n=0}^\infty \frac{
		\aqprod{\rho_1,\rho_2}{q}{n} 
		\left(\frac{aq}{\rho_1\rho_2} \right)^n \alpha_n
		}{\aqprod{aq/\rho_1,aq/\rho_2}{q}{n}}	
.  
\end{align*}

\begin{lemma}
If $\alpha$ and $\beta$ are a Bailey pair relative to $(a,q)$ then
\begin{align}
	\label{LemmaBailey1}
	&\sum_{n=0}^\infty a^n q^{n^2}\beta_n
		=
		\frac{1}{\aqprod{aq}{q}{\infty}}
		\sum_{n=0}^\infty a^n q^{n^2} \alpha_n
	,\\
	\label{LemmaBailey2}
	&\sum_{n=0}^\infty \aqprod{-\sqrt{aq}}{q}{n} a^{n/2}q^{n^2/2}\beta_n
		=
		\frac{\aqprod{-\sqrt{aq}}{q}{\infty}}{\aqprod{aq}{q}{\infty}}
		\sum_{n=0}^\infty a^{n/2}q^{n^2/2} \alpha_n
	,\\
	\label{LemmaBailey3}
	&\sum_{n=0}^\infty \aqprod{a}{q^2}{n} (-1)^n q^n\beta_n
		=
		\frac{\aqprod{aq^2}{q^2}{\infty}}{\aqprod{aq,-q}{q}{\infty}}
		\sum_{n=0}^\infty \frac{(1-a)}{(1-aq^{2n})}(-1)^n q^n \alpha_n
	,\\
	\label{LemmaBailey4}
	&\sum_{n=0}^\infty q^n\beta_n
		=
		\frac{1}{\aqprod{aq,q}{q}{\infty}}
		\sum_{n=0}^\infty\sum_{r=0}^\infty (-a)^n q^{n(n+1)/2 + 2nr +r} \alpha_r
	,\\
	\label{LemmaBailey5}
	&\sum_{n=0}^\infty q^{2n}\beta_n
		=
		\frac{1}{\aqprod{aq,q}{q}{\infty}}
		\left(
		\sum_{r=0}^\infty q^{2r}\alpha_r
		+
		\sum_{n=1}^\infty\sum_{r=0}^\infty (-1)^n a^{n-1} q^{n(n+1)/2 + 2nr}(1+aq^{2r})\alpha_r
		\right)
	,\\
	\label{LemmaBailey6}
	&\sum_{n=0}^\infty \aqprod{aq}{q^2}{n} q^{2n}\beta_n
		=
		\frac{1}{\aqprod{q}{q}{\infty}\aqprod{aq^2}{q^2}{\infty}(1+q)}
%		\nonumber\\&\quad\quad\quad\quad\quad\quad\quad\quad\quad\quad\times
		\sum_{n=0}^\infty\sum_{r=0}^\infty (-a)^n q^{n^2 + n + 2nr +  2r}(1-q^{2n+2})\alpha_r
	.
\end{align}	
If $\alpha$ and $\beta$ are a Bailey pair relative to $(a^2q,q)$
then
\begin{align}
	\label{LemmaBailey7}
	\sum_{n=0}^\infty \aqprod{-aq}{q}{n} q^{n}\beta_n
		&=
		\frac{\aqprod{-aq}{q}{\infty}}{\aqprod{q,a^2q^2}{q}{\infty}}
		\sum_{n=0}^\infty\sum_{r=0}^\infty a^{3n} q^{n(3n+5)/2 + 3nr + r}(1-aq^{n+r+1})\alpha_r
.
\end{align}
\end{lemma}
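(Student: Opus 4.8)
The plan is to read off each of \eqref{LemmaBailey1}--\eqref{LemmaBailey7} either as a limiting case of the quoted form of Bailey's Lemma or from the standard Bailey transform $\sum_{n\ge 0}\alpha_n\gamma_n=\sum_{n\ge 0}\beta_n\delta_n$, which holds for a Bailey pair $(\alpha,\beta)$ and any conjugate Bailey pair $(\gamma,\delta)$ relative to the same $(a,q)$, where $\gamma_n=\sum_{k\ge n}\delta_k/(\aqprod{q}{q}{k-n}\aqprod{aq}{q}{k+n})$. Identities \eqref{LemmaBailey1}--\eqref{LemmaBailey3} fall out immediately from the former: take $\rho_1,\rho_2\to\infty$ for \eqref{LemmaBailey1}; $\rho_1=-\sqrt{aq}$, $\rho_2\to\infty$ for \eqref{LemmaBailey2}; and $\rho_1=\sqrt a$, $\rho_2=-\sqrt a$ for \eqref{LemmaBailey3}. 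One passes to the $\rho\to\infty$ limit using $\aqprod{\rho}{q}{n}\sim(-\rho)^nq^{\binom{n}{2}}$ and tidies the resulting products with $\aqprod{a}{q^2}{n}=\aqprod{\sqrt a,-\sqrt a}{q}{n}$, $\aqprod{aq^2}{q^2}{n}=\aqprod{\sqrt aq,-\sqrt aq}{q}{n}$, and the telescoping $\aqprod{a}{q^2}{n}/\aqprod{aq^2}{q^2}{n}=(1-a)/(1-aq^{2n})$; these three are routine verifications.

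For \eqref{LemmaBailey4}--\eqref{LemmaBailey6} the idea is to construct the relevant conjugate Bailey pair relative to $(a,q)$, taking $\delta_n=q^n$, $\delta_n=q^{2n}$, and $\delta_n=\aqprod{aq}{q^2}{n}q^{2n}$ respectively, so that the left-hand sides are $\sum_r\beta_r\delta_r$, and to compute $\gamma_r$ in closed form (equivalently, substitute $\beta_r=\sum_{k=0}^r\alpha_k/(\aqprod{q}{q}{r-k}\aqprod{aq}{q}{r+k})$ into the left-hand side and interchange the order of summation). Writing $k=r+m$ in $\gamma_r=\sum_{k\ge r}\delta_k/(\aqprod{q}{q}{k-r}\aqprod{aq}{q}{k+r})$ and factoring out the part depending on $r$ alone, the computation reduces to evaluating an inner sum of the shape $\sum_{m\ge 0}\aqprod{b}{q}{m}t^m/(\aqprod{q}{q}{m}\aqprod{aq^{2r+1}}{q}{m})$. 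The workhorse is the confluent (second-upper-parameter $\to 0$) case of Heine's transformation,
\[
\sum_{m\ge 0}\frac{\aqprod{b}{q}{m}t^m}{\aqprod{q}{q}{m}\aqprod{c}{q}{m}}
=\frac{\aqprod{bt}{q}{\infty}}{\aqprod{c,t}{q}{\infty}}
\sum_{j\ge 0}\frac{(-c)^jq^{\binom{j}{2}}\aqprod{t}{q}{j}}{\aqprod{q}{q}{j}\aqprod{bt}{q}{j}},
\]
which can also be derived on the spot from Euler's identity $\aqprod{x}{q}{\infty}=\sum_j(-x)^jq^{\binom{j}{2}}/\aqprod{q}{q}{j}$ together with the $q$-binomial theorem. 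Applied with $c=aq^{2r+1}$ (and $b=0$ for \eqref{LemmaBailey4} and \eqref{LemmaBailey5}), it collapses the inner sum to a single theta-type sum over $j$; recombining $\sum_r\alpha_r\gamma_r$ and relabelling $j$ produces the stated double sums, with the $j=0$ contribution accounting for the separated $\sum_rq^{2r}\alpha_r$ in \eqref{LemmaBailey5}. For \eqref{LemmaBailey6} one must first re-expand $\aqprod{aq}{q^2}{r+m}$ and $\aqprod{aq}{q}{2r+m}$ (splitting on the parity of $m$, or passing to $q^2$-Pochhammer factors) before the transformation applies, and the spurious $(1+q)$ in its denominator is produced at this stage.

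Identity \eqref{LemmaBailey7} is handled the same way, now relative to $(a^2q,q)$, with $\delta_r=\aqprod{-aq}{q}{r}q^r$: after $k=r+m$ and factoring, $\gamma_r=\frac{\aqprod{-aq}{q}{r}q^r}{\aqprod{a^2q^2}{q}{2r}}\sum_m\frac{\aqprod{-aq^{r+1}}{q}{m}q^m}{\aqprod{q}{q}{m}\aqprod{a^2q^{2r+2}}{q}{m}}$, and the displayed transformation applies with $b=-aq^{r+1}$, $c=b^2$, $t=q$, producing $\sum_j(-b^2)^jq^{\binom{j}{2}}/\aqprod{-aq^{r+2}}{q}{j}$. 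Since this is not a product, one further step is needed: the Rogers--Fine identity, in the limit where one upper parameter tends to infinity, turns $\sum_j(-1)^jt^jq^{\binom{j}{2}}/\aqprod{y}{q}{j}$ into $\sum_n(-1)^n(ty)^nq^{3n(n-1)/2}(1-tq^{2n})\aqprod{tq/y}{q}{n}/\aqprod{y}{q}{n}$; applying it with $t=b^2$, $y=-aq^{r+2}$, then using $\aqprod{-aq^{r+1}}{q}{n}/\aqprod{-aq^{r+2}}{q}{n}=(1+aq^{r+1})/(1+aq^{r+n+1})$ and $1-a^2q^{2(r+n+1)}=(1-aq^{r+n+1})(1+aq^{r+n+1})$, the sum reduces to $(1+aq^{r+1})\sum_n a^{3n}q^{n(3n+5)/2+3rn}(1-aq^{r+n+1})$. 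Feeding this back and using the product collapses $\aqprod{-aq}{q}{r}\aqprod{-aq^{r+1}}{q}{\infty}=\aqprod{-aq}{q}{\infty}$, $\aqprod{a^2q^2}{q}{2r}\aqprod{a^2q^{2r+2}}{q}{\infty}=\aqprod{a^2q^2}{q}{\infty}$, and $\aqprod{-aq^{r+2}}{q}{\infty}(1+aq^{r+1})=\aqprod{-aq^{r+1}}{q}{\infty}$, gives $\gamma_r=\frac{q^r\aqprod{-aq}{q}{\infty}}{\aqprod{a^2q^2,q}{q}{\infty}}\sum_n a^{3n}q^{n(3n+5)/2+3rn}(1-aq^{r+n+1})$, and $\sum_r\alpha_r\gamma_r$ is precisely the right-hand side of \eqref{LemmaBailey7}.

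I expect \eqref{LemmaBailey7} to be the main obstacle: it is the only identity needing the Rogers--Fine step and the ensuing telescoping on top of the Heine evaluation, and the cascade of product simplifications must be carried through without slips; \eqref{LemmaBailey6} is the secondary difficulty because of the parity bookkeeping in re-expanding the mixed $q$- and $q^2$-Pochhammer symbols. Throughout, all interchanges of summation are legitimate, since once the Bailey pair $(\alpha,\beta)$ is fixed each coefficient of a power of $q$ receives only finitely many contributions.
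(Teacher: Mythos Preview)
Your treatment of \eqref{LemmaBailey1}--\eqref{LemmaBailey3} is exactly the paper's: the same specializations $\rho_1,\rho_2\to\infty$; $\rho_2=-\sqrt{aq}$, $\rho_1\to\infty$; and $\rho_1=\sqrt a$, $\rho_2=-\sqrt a$ in Bailey's Lemma.

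For \eqref{LemmaBailey4}--\eqref{LemmaBailey7} the paper does not prove anything; it simply cites them as parts 1, 11, 12, and 13 of Theorem~1 in Lovejoy's paper on conjugate Bailey pairs, noting only that they arise from conjugate Bailey pairs rather than from the $\rho_1,\rho_2$ specialization. What you have written is, in effect, a reconstruction of those proofs: you set up the Bailey transform $\sum\alpha_r\gamma_r=\sum\beta_r\delta_r$ with the appropriate $\delta$ and evaluate $\gamma_r$ via the confluent Heine transformation (and, for \eqref{LemmaBailey7}, a further Rogers--Fine step). I checked your computations for \eqref{LemmaBailey4} and \eqref{LemmaBailey5} in full and the Heine step plus the $t=q$ (resp.\ $t=q^2$) substitution and the telescoping $(q^2;q)_j/(q;q)_j=(1-q^{j+1})/(1-q)$ do land exactly on the stated right-hand sides; your outline for \eqref{LemmaBailey7} also checks out through the Rogers--Fine limit and the product collapses. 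So your route is not genuinely different from what the paper relies on---it is the same conjugate-Bailey-pair mechanism Lovejoy uses---but you are supplying the argument rather than citing it, which is more informative. The only place your sketch is thin is \eqref{LemmaBailey6}: the phrase ``splitting on the parity of $m$, or passing to $q^2$-Pochhammer factors'' hides the actual manipulation needed to bring $\aqprod{aq^{2r+1}}{q^2}{m}/\aqprod{aq^{2r+1}}{q}{m}$ into a form where your Heine identity applies, and that is where any error would creep in; it would be worth writing that step out explicitly.
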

\begin{proof}
Equation	(\ref{LemmaBailey1}) is the well known identity obtained by letting
$\rho_1,\rho_2\rightarrow\infty$ in Bailey's lemma.
For (\ref{LemmaBailey2}) we let $\rho_1\rightarrow \infty$ and 
$\rho_2 =-\sqrt{aq}$ in Bailey's Lemma, simplifying then gives the result.
For (\ref{LemmaBailey3}) we let $\rho_1 = \sqrt{a}$ and 
$\rho_2 =-\sqrt{a}$ in Bailey's Lemma, simplifying then gives the result.
Equations (\ref{LemmaBailey4}), (\ref{LemmaBailey5}), (\ref{LemmaBailey6}), and 
(\ref{LemmaBailey7}) are parts 1, 11, 12, and 13 of Theorem 1 from \cite{Lovejoy3}.
These identities are from conjugate Bailey pairs rather than specializations
of $\rho_1$ and $\rho_2$.
\end{proof}

The following are two Bailey pairs relative to $(a,q)$, both of which 
follow immediately from the definition of a Bailey pair:
\begin{align}\label{FirstBaileyPair}
	\beta_n(a) &= \frac{1}{\aqprod{aq,q}{q}{n}}
	,\\\label{FirstBaileyPairAlpha}
	\alpha_n(a) &= \PieceTwo{1}{0}{n=0}{n\ge 1}
,
\end{align}
and
\begin{align}\label{SecondBaileyPair}
	\beta^*_n(a) &= \frac{1}{\aqprod{aq^2,q}{q}{n}}
	,\\\label{SecondBaileyPairAlpha}
	\alpha^*_n(a) &= 
		\left\{
   		\begin{array}{ll}
      		1 & n=0 \\
       		-aq & n=1 \\
				0 & n\ge 2 
     		\end{array}
		\right.
.
\end{align}

In rearranging each $S_X(z,q)$, we use
Proposition 4.1 of \cite{Garvan3}, which is
\begin{align}\label{Prop4.1}
	(1+z)\aqprod{z,z^{-1}}{q}{n}
	&=
	\sum_{j=-n}^{n+1} (-1)^{j+1}\frac{\aqprod{q}{q}{2n}}
		{\aqprod{q}{q}{n+j}\aqprod{q}{q}{n-j+1}}
		(1-q^{2j-1})z^jq^{j(j-3)/2 + 1}
.
\end{align}
The proofs of the identities in Theorem \ref{TheoremFinalSeriesIdentities}
and Proposition \ref{PropositionFirstDoubleSeries} are rather uniform.
As such we will prove one identity in full detail and only give a brief
sketch of the others.

\begin{proof}[Proof of (\ref{FirstA1Series})]
By (\ref{Prop4.1}) we have
\begin{align}\label{EqA1Start}
	(1+z)\aqprod{z,z^{-1}}{q}{\infty}S_{A1}(z,q)
	&=
	\aqprod{q}{q}{\infty}\sum_{n=1}^\infty 
		\frac{(1+z)\aqprod{z,z^{-1}}{q}{n}q^{n}}{\aqprod{q}{q}{2n}}
	\nonumber\\
	&=
	\aqprod{q}{q}{\infty}
	\sum_{n=1}^\infty \sum_{j=-n}^{n+1}
		\frac{(-1)^{j+1}z^j(1-q^{2j-1})q^{n+j(j-3)/2 + 1}}{\aqprod{q}{q}{n+j}\aqprod{q}{q}{n-j+1}}
.
\end{align}

\sloppy
Since $S(z^{-1},q) = S(z,q)$, we find that the
coefficient of $z^{-j}$ in 
$(1+z)\aqprod{z,z^{-1}}{q}{\infty}S_{A1}(z,q)$ is the same as the coefficient
of $z^{j+1}$. We next find these coefficients for $j\ge 1$. We will use the
Bailey pair $\alpha$ and $\beta$ from (\ref{FirstBaileyPair}) and (\ref{FirstBaileyPairAlpha})
and apply (\ref{LemmaBailey4}).

\fussy

For $j = 1$, we take $j=1$ in (\ref{EqA1Start}) and so
$[z](1+z)\aqprod{z,z^{-1}}{q}{\infty}S_{A1}(z,q)$ is given by
\begin{align*}
	\aqprod{q}{q}{\infty}\sum_{n=1}^\infty 
	\frac{(1-q)q^n}{\aqprod{q}{q}{n+1}\aqprod{q}{q}{n}}
	&=
	\aqprod{q}{q}{\infty}\sum_{n=1}^\infty 
	\frac{q^n}{\aqprod{q^2}{q}{n}\aqprod{q}{q}{n}}
	\\
	&=
	\aqprod{q}{q}{\infty}\sum_{n=0}^\infty 
	\frac{q^n}{\aqprod{q^2}{q}{n}\aqprod{q}{q}{n}}
	-\aqprod{q}{q}{\infty}
	\\
	&=
	\aqprod{q}{q}{\infty}\sum_{n=0}^\infty q^n\beta_n(q)
	-\aqprod{q}{q}{\infty}
	\\
	&=
	\frac{\aqprod{q}{q}{\infty}}{\aqprod{q^2,q}{q}{\infty}}\sum_{n=0}^\infty 
		(-1)^n q^{n+\frac{n(n+1)}{2}}
	-\aqprod{q}{q}{\infty}
	\\
	&=
	\frac{1}{\aqprod{q^2}{q}{\infty}}\sum_{n=0}^\infty 
		(-1)^n q^{n+\frac{n(n+1)}{2}}
	-\aqprod{q}{q}{\infty}
.
\end{align*}

For $j\ge 2$, the calculations are similar. We have
$[z^j](1+z)\aqprod{z,z^{-1}}{q}{\infty}S_{A1}(z,q)$ is given by
\begin{align}
	\label{ProofA1SeriesCoeff2AndUp}
	&\aqprod{q}{q}{\infty}
	\sum_{n=j-1}^\infty
	\frac{(-1)^{j+1}(1-q^{2j-1})q^{n+j(j-3)/2 + 1}}
		{\aqprod{q}{q}{n+j}\aqprod{q}{q}{n-j+1}}
	\nonumber\\
	&=
	(-1)^{j+1} (1-q^{2j-1}) q^{j(j-1)/2} \aqprod{q}{q}{\infty}
	\sum_{n=0}^\infty \frac{q^n}{\aqprod{q}{q}{n+2j-1}\aqprod{q}{q}{n}}
	\nonumber\\
	&=
	\frac{(-1)^{j+1} (1-q^{2j-1}) q^{j(j-1)/2} 	\aqprod{q}{q}{\infty}}{\aqprod{q}{q}{2j-1}}
	\sum_{n=0}^\infty \frac{q^n}{\aqprod{q^{2j},q}{q}{n}}
	\nonumber\\
	&=
	\frac{(-1)^{j+1} (1-q^{2j-1}) q^{j(j-1)/2} 	\aqprod{q}{q}{\infty}}{\aqprod{q}{q}{2j-1}}
	\sum_{n=0}^\infty q^n\beta_n(q^{2j-1})
	\nonumber\\
	&=
	\frac{(-1)^{j+1} (1-q^{2j-1}) q^{j(j-1)/2} 	\aqprod{q}{q}{\infty}}
		{\aqprod{q}{q}{2j-1} \aqprod{q^{2j},q}{q}{\infty}}
	\sum_{n=0}^\infty (-1)^n q^{n(n-1)/2 +  2jn} 
	\nonumber\\
	&=
	\frac{(-1)^{j+1} (1-q^{2j-1}) q^{j(j-1)/2}}
		{\aqprod{q}{q}{\infty}}
	\sum_{n=0}^\infty (-1)^n q^{n(n-1)/2 +  2jn} 
.
\end{align}
We note this formula agrees at $j=1$ with the coefficient of $z$, except for the missing
$-\aqprod{q}{q}{\infty}$.

We rearrange these terms slightly. We have
\begin{align*}
	&(-1)^{j+1} (1-q^{2j-1}) q^{j(j-1)/2}
	\sum_{n=0}^\infty (-1)^n q^{n(n-1)/2 +  2jn} 
	\\
	&=
	\sum_{n=0}^\infty (-1)^{n+j+1} q^{j(j-1)/2 + n(n-1)/2 +  2jn} 
	+
	\sum_{n=0}^\infty (-1)^{n+j} q^{j(j+3)/2 + n(n-1)/2 +  2jn - 1} 
	\\
	&=
	\sum_{n=0}^\infty (-1)^{n+j+1} q^{j(j-1)/2 + n(n-1)/2 +  2jn} 
	+
	\sum_{n=1}^\infty (-1)^{n+j+1} q^{j(j-1)/2 + n(n-3)/2 +  2jn}
	\\
	&=
	(-1)^{j+1} q^{j(j-1)/2}
	+
	\sum_{n=1}^\infty (-1)^{n+j+1} q^{j(j-1)/2 + n(n-3)/2 +  2jn}(1+q^n)
.
\end{align*}

Thus we have
\begin{align*}
	&(1+z)\aqprod{z,z^{-1}}{q}{\infty}S_{A1}(z,q)
	\\
	&=
	-(1+z)\aqprod{q}{q}{\infty}
	+
	\frac{1}{\aqprod{q}{q}{\infty}}\sum_{j=1}^\infty
		(z^j+z^{1-j}) (-1)^{j+1} q^{j(j-1)/2}
	\\&\quad
	+
	\frac{1}{\aqprod{q}{q}{\infty}}\sum_{j=1}^\infty\sum_{n=1}^\infty
		(z^j+z^{1-j}) (-1)^{n+j+1} q^{j(j-1)/2 + n(n-3)/2 +  2jn}(1+q^n)
,
\end{align*}
which completes the proof of (\ref{FirstA1Series}).
\end{proof}
%%%%%%%%%%%%%%%%%%%%%%%%%%%%%%%%%%%%%%%%%%%%%%%%%%%%%%%%%%%%%%%%%%%%%%%%%%%%%%%
%%%%%%%%%%%%%%%%%%%%%%%%%%%%%%%%%%%%%%%%%%%%%%%%%%%%%%%%%%%%%%%%%%%%%%%%%%%%%%%

\begin{proof}[Proof of (\ref{FirstA3Series}),
(\ref{A5Series}), (\ref{A7Series}), (\ref{FirstC1Series}), (\ref{C5Series}),
(\ref{E2Series}), (\ref{FirstE4Series})
]

By (\ref{Prop4.1}) we have
\begin{align}\label{EqA3Start}
	(1+z)\aqprod{z,z^{-1}}{q}{\infty}S_{A3}(z,q)
	&=
	\aqprod{q}{q}{\infty}
	\sum_{n=1}^\infty \sum_{j=-n}^{n+1}
		\frac{(-1)^{j+1}z^j(1-q^{2j-1})q^{2n+j(j-3)/2 + 1}}{\aqprod{q}{q}{n+j}\aqprod{q}{q}{n-j+1}}
	,\\
	%%%%%%%%%%%%%%%%%%%%%%%%%%%%%%%%%%%%%%%%%%%%%%%%%%%%%%%%%%%%%%%%%%%%%%%%%%%%%%
	\label{EqA5Start}
	(1+z)\aqprod{z,z^{-1}}{q}{\infty}S_{A5}(z,q)
	&=
	\aqprod{q}{q}{\infty}
	\sum_{n=1}^\infty \sum_{j=-n}^{n+1}
		\frac{(-1)^{j+1}z^j(1-q^{2j-1})q^{n^2 + n +j(j-3)/2 + 1}}{\aqprod{q}{q}{n+j}\aqprod{q}{q}{n-j+1}}
	,\\
	%%%%%%%%%%%%%%%%%%%%%%%%%%%%%%%%%%%%%%%%%%%%%%%%%%%%%%%%%%%%%%%%%%%%%%%%%%%%%%
	\label{EqA7Start}
	(1+z)\aqprod{z,z^{-1}}{q}{\infty}S_{A7}(z,q)
	&=
	\aqprod{q}{q}{\infty}
	\sum_{n=1}^\infty \sum_{j=-n}^{n+1}
		\frac{(-1)^{j+1}z^j(1-q^{2j-1})q^{n^2 +j(j-3)/2 + 1}}{\aqprod{q}{q}{n+j}\aqprod{q}{q}{n-j+1}}
.
\end{align}
As with $(1+z)\aqprod{z,z^{-1}}{q}{\infty}S_{A1}(z,q)$, the coefficients
of 
$z^{-j}$
and $z^{1+j}$ are the same in each
$(1+z)\aqprod{z,z^{-1}}{q}{\infty}S_{X}(z,q)$.
For $S_{A3}(z,q)$
we use the Bailey pair $\alpha$ and $\beta$ from 
(\ref{FirstBaileyPair}) and (\ref{FirstBaileyPairAlpha})
and apply (\ref{LemmaBailey5}),
for 
$S_{A5}(z,q)$ we use the Bailey pair $\alpha$ and $\beta$ and apply (\ref{LemmaBailey1}),
and for
$S_{A7}(z,q)$ we use the Bailey pair $\alpha^*$ and $\beta^*$ and apply
(\ref{LemmaBailey1}).
The identities then follows after elementary rearrangements.

For $S_{C1}(z,q)$, $S_{C5}(z,q)$, $S_{E2}(z,q)$, and $S_{E4}(z,q)$
we first use that
\begin{align*}
	\frac{1}{\aqprod{q}{q^2}{n}\aqprod{q}{q}{n}} 
	&=
		\frac{\aqprod{q^2}{q^2}{n}}{\aqprod{q}{q}{2n}\aqprod{q}{q}{n}}
		=
		\frac{\aqprod{q,-q}{q}{n}}{\aqprod{q}{q}{2n}\aqprod{q}{q}{n}}
		=
		\frac{\aqprod{-q}{q}{n}}{\aqprod{q}{q}{2n}}
	,\\
	\frac{1}{\aqprod{q^2}{q^2}{n}} 
	&= 
	\frac{\aqprod{q}{q^2}{n}}{\aqprod{q}{q}{2n}}
,
\end{align*}
so that 
by (\ref{Prop4.1}) we have
\begin{align}
	\label{EqC1Start}
	&(1+z)\aqprod{z,z^{-1}}{q}{\infty}S_{C1}(z,q)
	\nonumber\\
		&=
		\aqprod{q}{q^2}{\infty}\aqprod{q}{q}{\infty}
		\sum_{n=1}^\infty \sum_{j=-n}^{n+1}
			\frac{(-1)^{j+1}z^j(1-q^{2j-1})q^{n + j(j-3)/2 + 1} \aqprod{-q}{q}{n}}
		{\aqprod{q}{q}{n+j}\aqprod{q}{q}{n-j+1}}
	,\\
	%%%%%%%%%%%%%%%%%%%%%%%%%%%%%%%%%%%%%%%%%%%%%%%%%%%%%%%%%%%%%%%%%%%%%%%%%%%%%%
	\label{EqC5Start}
	&(1+z)\aqprod{z,z^{-1}}{q}{\infty}S_{C5}(z,q)
		\nonumber\\
		&=
		\aqprod{q}{q^2}{\infty}\aqprod{q}{q}{\infty}
		\sum_{n=1}^\infty \sum_{j=-n}^{n+1}
			\frac{(-1)^{j+1}z^j(1-q^{2j-1})q^{n(n+1)/2 + j(j-3)/2 + 1} \aqprod{-q}{q}{n}}
		{\aqprod{q}{q}{n+j}\aqprod{q}{q}{n-j+1}}
	,\\
	%%%%%%%%%%%%%%%%%%%%%%%%%%%%%%%%%%%%%%%%%%%%%%%%%%%%%%%%%%%%%%%%%%%%%%%%%%%%%%
	\label{EqE2Start}
	&(1+z)\aqprod{z,z^{-1}}{q}{\infty}S_{E2}(z,q)
	\nonumber\\
		&=
		\aqprod{q^2}{q^2}{\infty}
		\sum_{n=1}^\infty \sum_{j=-n}^{n+1}
			\frac{(-1)^{n+j+1}z^j(1-q^{2j-1})q^{n + j(j-3)/2 + 1} \aqprod{q}{q^2}{n}}
		{\aqprod{q}{q}{n+j}\aqprod{q}{q}{n-j+1}}
	,\\
	%%%%%%%%%%%%%%%%%%%%%%%%%%%%%%%%%%%%%%%%%%%%%%%%%%%%%%%%%%%%%%%%%%%%%%%%%%%%%%
	\label{EqE4Start}
	&(1+z)\aqprod{z,z^{-1}}{q}{\infty}S_{E4}(z,q)
		\nonumber\\
		&=
		\aqprod{q^2}{q^2}{\infty}
		\sum_{n=1}^\infty \sum_{j=-n}^{n+1}
			\frac{(-1)^{j+1}z^j(1-q^{2j-1})q^{2n + j(j-3)/2 + 1} \aqprod{q}{q^2}{n}}
		{\aqprod{q}{q}{n+j}\aqprod{q}{q}{n-j+1}}
.
\end{align}
For $S_{C1}(z,q$)
we use the Bailey pair $\alpha$ and $\beta$ and apply (\ref{LemmaBailey7}),
for
$S_{C5}(z,q)$
we use the Bailey pair $\alpha$ and $\beta$ and apply (\ref{LemmaBailey2}),
for
$S_{E2}(z,q)$
we use the Bailey pair $\alpha$ and $\beta$ and apply (\ref{LemmaBailey3}),
and for
$S_{E4}(z,q)$
we use the Bailey pair $\alpha^*$ and $\beta^*$ and apply (\ref{LemmaBailey6}).
The identities then follows after elementary rearrangements.

\end{proof}

\begin{proof}[Proof of (\ref{CorollaryProduct1}), (\ref{CorollaryProduct2}),
(\ref{CorollaryProduct3}), and (\ref{CorollaryProduct4})]

As noted in (\ref{EqA1Start}),
\begin{align*}
	(1+z)\aqprod{z,z^{-1}}{q}{\infty}S_{A1}(z,q)
	&=
	\aqprod{q}{q}{\infty}\sum_{n=1}^\infty 
		\frac{(1+z)\aqprod{z,z^{-1}}{q}{n}q^{n}}{\aqprod{q}{q}{2n}}
.
\end{align*}
Thus setting $z=1$ in (\ref{FirstA1Series}) gives
\begin{align*}
	2\aqprod{q}{q}{\infty}
	&=
		\frac{1}{\aqprod{q}{q}{\infty}}
		\sum_{k=1}^\infty 2
		\left( (-1)^{k+1}q^{k(k-1)/2} +
			\sum_{n=1}^\infty (-1)^{n+k+1} q^{\frac{k(k-1)}{2} + \frac{n(n-3)}{2} + 2kn}(1+q^n)
		\right)
.
\end{align*}
This proves (\ref{CorollaryProduct1}).
Similarly (\ref{CorollaryProduct2}), (\ref{CorollaryProduct3}), and
(\ref{CorollaryProduct4}), follow by setting $z=1$ in 
(\ref{FirstA3Series}), (\ref{FirstC1Series}), and (\ref{FirstE4Series}) respectively.
\end{proof}

\begin{proof}[Proof of (\ref{FinalA1Series}), (\ref{FinalA3Series}),
(\ref{FinalC1Series}), and (\ref{FinalE4Series}) ]
We see (\ref{FinalA1Series}) follows directly from (\ref{FirstA1Series})
and (\ref{CorollaryProduct1}), noting that
$z^k + z^{1-k} - 1 - z = (1-z^{k-1})(1-z^k)z^{1-k}= (z^{1-k}-1)(1-z^k)z^{1-k}$. 
The remaining three identities of Theorem \ref{TheoremFinalSeriesIdentities} 
follow as well with the corresponding product identities product identities 
of Corollary \ref{CorollaryAllSingleSeriesProducts}.
\end{proof}

\begin{proof}[Proof for Corollary \ref{CorollaryHeckeDoubleSeries} ]
The proofs only require elementary rearrangements of series and 
Theorem \ref{TheoremFinalSeriesIdentities}. 
For (\ref{HeckeDoubleA1Series}) we have
\begin{align*}
	&\sum_{k=1}^\infty\sum_{n=-[k/2]}^{[k/2]}
	(-1)^{n+k}(1-z^{k-2|n|})(1-z^{2|n|-k+1}) q^{\frac{k^2-k-3n^2-n}{2}}
	\\
	&=
	\sum_{k=0}^\infty\sum_{n=-[k/2]}^{[k/2]}
	(-1)^{n+k}(1-z^{k-2|n|})(1-z^{2|n|-k+1}) q^{\frac{k^2-k-3n^2-n}{2}}
	\\
	&=
	\sum_{n=-\infty}^\infty\sum_{k\ge 2|n|}
	(-1)^{n+k}(1-z^{k-2|n|})(1-z^{2|n|-k+1})q^{\frac{k^2-k-3n^2-n}{2}}
	\\
	&=
	\sum_{n=-\infty}^\infty\sum_{k=0}^\infty
	(-1)^{n+k}(1-z^k)(1-z^{-k+1})q^{\frac{k^2-k+n^2-n}{2}+2k|n|-|n|}
	\\
	&=
	\sum_{n=-\infty}^\infty\sum_{k=1}^\infty
	(-1)^{n+k}(1-z^k)(1-z^{-k+1})q^{\frac{k^2-k+n^2-n}{2}+2k|n|-|n|}
	\\
	&=
	\sum_{n=-\infty}^\infty\sum_{k=1}^\infty
	(-1)^{n+k+1}(1-z^k)(1-z^{k-1})z^{1-k}q^{\frac{k^2-k+n^2-n}{2}+2k|n|-|n|}
	\\
	&=
	\sum_{k=1}^\infty
	(1-z^k)(1-z^{k-1})z^{1-k}
	\left(
		(-1)^{k+1}q^{\frac{k^2-k}{2}}
		+
		\sum_{n=1}^\infty (-1)^{n+k+1}q^{\frac{k^2-k+n^2-n}{2}+2kn-n}
		\right.\\&\qquad\left.
		+
		\sum_{n=1}^\infty (-1)^{n+k+1}q^{\frac{k^2-k+n^2+n}{2}+2kn-n}
	\right)
	\\
	&=
	\sum_{k=1}^\infty
	(1-z^k)(1-z^{k-1})z^{1-k}
	\left(
		(-1)^{k+1}q^{\frac{k^2-k}{2}}
		+
		\sum_{n=1}^\infty (-1)^{n+k+1}q^{\frac{k^2-k+n^2-3n}{2}+2kn}(1+q^n)
	\right)
	\\
	&=
	(1+z)\aqprod{q,z,z^{-1}}{q}{\infty}S_{A1}(z,q)	
.
\end{align*}

The proofs of (\ref{HeckeDoubleA3Series}),
(\ref{HeckeDoubleC1Series}),
and
(\ref{HeckeDoubleE4Series}) 
follow in the same fashions and so we omit the details.

\end{proof}
%%%%%%%%%%%%%%%%%%%%%%%%%%%%%%%%%%%%%%%%%%%%%%%%%%%%%%%%%%%%%%%%%%%%%%%%%%%%%%%
%%%%%%%%%%%%%%%%%%%%%%%%%%%%%%%%%%%%%%%%%%%%%%%%%%%%%%%%%%%%%%%%%%%%%%%%%%%%%%%

\begin{proof}[Proof of (\ref{CorollaryProduct5}) and (\ref{CorollaryProduct6})]
These identities follow from a similar set of rearrangements as those
for (\ref{HeckeDoubleC1Series}) and  
(\ref{HeckeDoubleE4Series}). As such we only include the details for
(\ref{CorollaryProduct5}).
We find that
\begin{align*}
		&\sum_{k=1}^\infty
		\sum_{n=0}^{[k/3]}
		(-1)^{n+k}(-z^{3n-k+1}-z^{k-3n})q^{\frac{k^2-k}{2}-3n^2+n}
		\nonumber\\&\quad
		-
		\sum_{k=1}^\infty
		\sum_{n=0}^{[k/3]}
		(-1)^{n+k}(-z^{3n-k+1}-z^{k-3n})q^{\frac{k^2+k}{2}-3n^2-n}
		\nonumber\\&\quad
		+
		\sum_{k=1}^\infty
		\sum_{n=1}^{[k/3]}
		(-1)^{n+k}(-z^{3n-k}-z^{k-3n+1})q^{\frac{k^2-k}{2}-3n^2+n}
		\nonumber\\&\quad
		-
		\sum_{k=1}^\infty
		\sum_{n=1}^{[k/3]}
		(-1)^{n+k}(-z^{3n-k}-z^{k-3n+1})q^{\frac{k^2+k}{2}-3n^2-n}
	\\
	&=
		\sum_{n=0}^\infty
		\sum_{k=1}^\infty
		(-1)^{k}(-z^{1-k}-z^{k})q^{\frac{k^2-k+3n^2-n}{2}+3kn}(1-q^{k+n})(1-q^{2k-1})
		\\&\quad
		-
		(1+z)\sum_{n=0}^\infty q^{\frac{3n^2-n}{2}}(1-q^n)
.
\end{align*}
By 
(\ref{FirstC1Series})
we have that
\begin{align*}
	&-(1+z)\aqprod{q}{q}{\infty}^2\aqprod{q}{q^2}{\infty}
	\\
	&=
		(1+z)\aqprod{z,z^{-1}}{q}{\infty}S_{C1}(z,q)
		\\&\quad
		-
		\sum_{j=1}^\infty\sum_{n=0}^\infty (z^j + z^{1-j})
			(-1)^{j+1} q^{\frac{j(j-1)}{2} + \frac{n(3n-1)}{2} + 3nj}(1-q^{j+n})(1-q^{2j-1})
	\\
	&=
		(1+z)\aqprod{z,z^{-1}}{q}{\infty}S_{C1}(z,q)
		\\&\quad
		-
		\sum_{n=0}^\infty
		\sum_{k=1}^\infty
		(-1)^{k}(-z^{1-k}-z^{k})q^{\frac{k^2-k+3n^2-n}{2}+3kn}(1-q^{k+n})(1-q^{2k-1})
		\\&\quad
		-
		(1+z)\sum_{n=0}^\infty q^{\frac{3n^2-n}{2}}(1-q^n)
	\\
	&=
		\sum_{k=1}^\infty
		\sum_{n=0}^{[k/3]}
		(-1)^{n+k}(1+z)q^{\frac{k^2-k}{2}-3n^2+n}
		%\nonumber\\&\quad
		-
		\sum_{k=1}^\infty
		\sum_{n=0}^{[k/3]}
		(-1)^{n+k}(1+z)q^{\frac{k^2+k}{2}-3n^2-n}
		\nonumber\\&\quad
		+
		\sum_{k=1}^\infty
		\sum_{n=1}^{[k/3]}
		(-1)^{n+k}(1+z)q^{\frac{k^2-k}{2}-3n^2+n}
		%\nonumber\\&\quad
		-
		\sum_{k=1}^\infty
		\sum_{n=1}^{[k/3]}
		(-1)^{n+k}(1+z)q^{\frac{k^2+k}{2}-3n^2-n}
		\nonumber\\&\quad
		-
		(1+z)\sum_{n=0}^\infty q^{\frac{3n^2-n}{2}}(1-q^n)
,
\end{align*}
so that
\begin{align*}
	\aqprod{q}{q}{\infty}^2\aqprod{q}{q^2}{\infty}
	&=
		-\sum_{k=1}^\infty
		\sum_{n=0}^{[k/3]}
		(-1)^{n+k}q^{\frac{k^2-k}{2}-3n^2+n}
		%\nonumber\\&\quad
		+
		\sum_{k=1}^\infty
		\sum_{n=0}^{[k/3]}
		(-1)^{n+k}q^{\frac{k^2+k}{2}-3n^2-n}
		\nonumber\\&\quad
		-
		\sum_{k=1}^\infty
		\sum_{n=1}^{[k/3]}
		(-1)^{n+k}q^{\frac{k^2-k}{2}-3n^2+n}
		+
		\sum_{k=1}^\infty
		\sum_{n=1}^{[k/3]}
		(-1)^{n+k}q^{\frac{k^2+k}{2}-3n^2-n}
		+
		\sum_{n=0}^\infty q^{\frac{3n^2-n}{2}}(1-q^n)
	\\
	&=	
		\sum_{k=1}^\infty
		\sum_{n=0}^{[k/3]}
		(-1)^{n+k+1}q^{\frac{k^2-k}{2}-3n^2+n}
		%\nonumber\\&\quad
		+
		\sum_{k=1}^\infty
		\sum_{n=-[k/3]}^{0}
		(-1)^{n+k}q^{\frac{k^2+k}{2}-3n^2+n}
		\nonumber\\&\quad
		-
		\sum_{k=1}^\infty
		\sum_{n=1}^{[k/3]}
		(-1)^{n+k}q^{\frac{k^2-k}{2}-3n^2+n}
		%\nonumber\\&\quad
		+
		\sum_{k=1}^\infty
		\sum_{n=-[k/3]}^{-1}
		(-1)^{n+k}q^{\frac{k^2+k}{2}-3n^2+n}
		\\&\quad
		+
		\sum_{n=0}^\infty q^{\frac{3n^2-n}{2}}(1-q^n)
	\\
\end{align*}
We use that
\begin{align*}
	\sum_{k=1}^\infty
	\sum_{n=-[k/3]}^{-1}
	(-1)^{n+k}q^{\frac{k^2+k}{2}-3n^2+n}
	&=
		\sum_{k=1}^\infty
		\sum_{n=-[(k-1)/3]}^{-1}
		(-1)^{n+k+1}q^{\frac{k^2-k}{2}-3n^2+n}
	,\\
	-\sum_{k=1}^\infty\sum_{n=1}^{[k/3]}
		(-1)^{n+k}q^{\frac{k^2-k}{2}-3n^2+n}
	&=
		\sum_{k=1}^\infty\sum_{n=1}^{[(k+1)/3]}
		(-1)^{n+k}q^{\frac{k^2+k}{2}-3n^2+n}
,
\end{align*}
to find that
\begin{align*}
	\aqprod{q}{q}{\infty}^2\aqprod{q}{q^2}{\infty}
	&=
	\sum_{k=1}^\infty\sum_{n=-[(k-1)/3]}^{[k/3]}
		(-1)^{n+k+1}q^{\frac{k^2-k}{2}-3n^2+n}
	+
	\sum_{k=1}^\infty\sum_{n=-[k/3]}^{[(k+1)/3]}
		(-1)^{n+k}q^{\frac{k^2+k}{2}-3n^2+n}
	\\&\quad
	+
	\sum_{n=0}^\infty q^{\frac{3n^2-n}{2}}(1-q^n)
.
\end{align*}
But $[(k+1)/3] = [k/3]$ when $k\not\equiv 2\pmod{3}$ and
$[k/3]= -[(k-1)/3]$ when $k\not\equiv 3\pmod{3}$, so we find
\begin{align*}
	\sum_{k=1}^\infty\sum_{n=-[k/3]}^{[(k+1)/3]}
		(-1)^{n+k}q^{\frac{k^2+k}{2}-3n^2+n}
	&=
	\sum_{k=1}^\infty\sum_{n=-[(k-1)/3]}^{[k/3]}
		(-1)^{n+k}q^{\frac{k^2+k}{2}-3n^2+n}
	-
	\sum_{k=1}^\infty q^{ \frac{3k^2-k}{2} }
	%\\&\quad
	+
	\sum_{k=1}^\infty q^{ \frac{3k^2+k}{2} }
.
\end{align*}
Thus
\begin{align*}
	\aqprod{q}{q}{\infty}^2\aqprod{q}{q^2}{\infty}
	&=
	\sum_{k=1}^\infty\sum_{n=-[(k-1)/3]}^{[k/3]}
		(-1)^{n+k+1}q^{\frac{k^2-k}{2}-3n^2+n}(1-q^k)
.
\end{align*}

\end{proof}

%%%%%%%%%%%%%%%%%%%%%%%%%%%%%%%%%%%%%%%%%%%%%%%%%%%%%%%%%%%%%%%%%%%%%%%%%%%%%%%
%%%%%%%%%%%%%%%%%%%%%%%%%%%%%%%%%%%%%%%%%%%%%%%%%%%%%%%%%%%%%%%%%%%%%%%%%%%%%%%
%%%%%%%%%%%%%%%%%%%%%%%%%%%%%%%%%%%%%%%%%%%%%%%%%%%%%%%%%%%%%%%%%%%%%%%%%%%%%%%
\section{Proofs of Congruences by Theorem \ref{TheoremFinalSeriesIdentities}  }

We recall to prove the congruences in Theorem
\ref{TheoremCongruences}
we are to show  
the following terms are zero:
$q^{3n}$ in $S_{A1}(\zeta_3,q)$,
$q^{3n+1}$ in $S_{A3}(\zeta_3,q)$,
$q^{5n+4}$ in $S_{A5}(\zeta_5,q)$,
$q^{7n+1}$ in $S_{A5}(\zeta_7,q)$,
$q^{5n+4}$ in $S_{A7}(\zeta_5,q)$,
$q^{5n+3}$ in $S_{C5}(\zeta_5,q)$,
$q^{3n}$ in $S_{E2}(\zeta_3,q)$,
and $q^{3n+1}$ in $S_{E4}(\zeta_3,q)$.
The double series do not appear to easily give that 
the terms $q^{5n+1}$ in $S_{A3}(\zeta_5,q)$ and
$q^{5n+3}$ in $S_{C1}(\zeta_5,q)$ are zero.

\begin{corollary}
For $n\ge 0$, 
\begin{align*}
	M_{A1}(0,3,3n)&=M_{A1}(1,3,3n)=M_{A1}(2,3,3n)=\frac{1}{3}\sptA{A1}{3n}
	,\\
	M_{A3}(0,3,3n+1)&=M_{A3}(1,3,3n+1)=M_{A3}(2,3,3n+1)=\frac{1}{3}\sptA{A3}{3n+1}
	,\\
	M_{E2}(0,3,3n)&=M_{E2}(1,3,3n)=M_{E2}(2,3,3n)=\frac{1}{3}\sptA{E2}{3n}
	,\\
	M_{E4}(0,3,3n+1)&=M_{E4}(1,3,3n+1)=M_{E4}(2,3,3n+1)=\frac{1}{3}\sptA{E4}{3n+1}
	.
\end{align*}
\end{corollary}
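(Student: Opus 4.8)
The plan is to deduce each identity in the corollary from a single vanishing statement for a coefficient of $S_X(\zeta_3,q)$, exactly as outlined in the introduction. Recall that $1+x+x^2$ is the minimal polynomial of $\zeta_3$ over $\mathbb{Q}$, so that if $[q^N]S_X(\zeta_3,q)=0$ then $\sum_{k=0}^{2}M_X(k,3,N)\zeta_3^k=0$ forces $M_X(0,3,N)=M_X(1,3,N)=M_X(2,3,N)$; since these three integers sum to $\sptA{X}{N}$, each equals $\tfrac13\sptA{X}{N}$. Hence it suffices to show $[q^{3n}]S_{A1}(\zeta_3,q)=[q^{3n}]S_{E2}(\zeta_3,q)=0$ and $[q^{3n+1}]S_{A3}(\zeta_3,q)=[q^{3n+1}]S_{E4}(\zeta_3,q)=0$ for all $n\ge0$, which are precisely the vanishing claims listed at the beginning of Section~4.

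For $A1$, $A3$, and $E4$ I would specialize $z=\zeta_3$ in (\ref{FinalA1Series}), (\ref{FinalA3Series}), and (\ref{FinalE4Series}). In each, the summand carries a factor $(1-z^{k-1})(1-z^k)$, which vanishes at $z=\zeta_3$ unless $k\equiv2\pmod3$, so only those $k$ contribute. Since $1+\zeta_3=-\zeta_3^2\neq0$ and $\aqprod{\zeta_3,\zeta_3^{-1}}{q}{\infty}=3\,\aqprod{q^3}{q^3}{\infty}/\aqprod{q}{q}{\infty}$ (a nonzero power series, so $S_X(\zeta_3,q)$ is well defined), multiplying each identity through by $\aqprod{q}{q}{\infty}$ turns it into
\[
3(1+\zeta_3)\,\aqprod{q^3}{q^3}{\infty}\,S_X(\zeta_3,q)=\sum_{\substack{k\ge1\\ k\equiv2\,(3)}}\ \sum_{n\ge0}\ (\pm)\,q^{e(k,n)}.
\]
Using $\tfrac{k(k-1)}{2}\equiv1$ and $\tfrac{k(k+1)}{2}\equiv0\pmod3$ for $k\equiv2\pmod3$, together with $2kn\equiv n$, $\tfrac{n(n-3)}{2}\equiv0$ or $2$, and $n^2-n\equiv0$ or $2$ according to $n\bmod3$, one checks that every exponent $e(k,n)$ — including the shifts contributed by the $(1+q^n)$ and $(1-q^{2n+1})$ factors — is $\not\equiv0\pmod3$ for $A1$, and $\not\equiv1\pmod3$ for $A3$ and $E4$. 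Because $\aqprod{q^3}{q^3}{\infty}$ is a unit in $\mathbb{Z}[[q^3]]$, multiplication by it or by its inverse preserves the set of residues modulo $3$ supporting a power series; hence $S_{A1}(\zeta_3,q)$ carries no $q^{3n}$ and $S_{A3}(\zeta_3,q)$, $S_{E4}(\zeta_3,q)$ carry no $q^{3n+1}$.

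The $E2$ case I would handle from (\ref{E2Series}), where $\aqprod{q}{q^2}{\infty}$ appears rather than $\aqprod{q}{q}{\infty}^{-1}$. Setting $z=\zeta_3$ and keeping only $k\equiv2\pmod3$, the surviving sum collapses to $3\zeta_3^{2}\,q\,\aqprod{q}{q^2}{\infty}\sum_{j\ge0}q^{9j(j+1)/2}$, and using $\zeta_3^2/(1+\zeta_3)=-1$ one obtains
\[
S_{E2}(\zeta_3,q)=-\,q\,\frac{\aqprod{q}{q}{\infty}\aqprod{q}{q^2}{\infty}}{\aqprod{q^3}{q^3}{\infty}}\sum_{j\ge0}q^{9j(j+1)/2}.
\]
Now $\aqprod{q}{q}{\infty}\aqprod{q}{q^2}{\infty}=\aqprod{q}{q}{\infty}^2/\aqprod{q^2}{q^2}{\infty}=\sum_{m\in\mathbb{Z}}(-1)^mq^{m^2}$ — the Jacobi triple product form displayed above at $z=1$ — whose exponents $m^2$ are never $\equiv2\pmod3$; dividing by the $q^3$-series $\aqprod{q^3}{q^3}{\infty}$ and multiplying by the $q^9$-series $\sum_j q^{9j(j+1)/2}$ keeps the support off residue $2$, and the overall factor $q$ then moves it onto residues $1$ and $2$. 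Thus $S_{E2}(\zeta_3,q)$ has no $q^{3n}$ term, and together with the reduction in the first paragraph this completes the corollary.

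The only genuine work is the residue bookkeeping in the second paragraph: one must verify, uniformly over the surviving pairs $(k,n)$, that the $q$-exponents of the Hecke--Rogers-type summands in (\ref{FinalA1Series}), (\ref{FinalA3Series}), (\ref{FinalE4Series}) never land in the forbidden residue class modulo $3$, which is conceptually routine but requires care to cover the $(1+q^n)$ and $(1-q^{2n+1})$ shifts; for $E2$ the one trick is recognizing the theta evaluation $\aqprod{q}{q}{\infty}^2/\aqprod{q^2}{q^2}{\infty}=\sum_m(-1)^mq^{m^2}$. Everything else is formal manipulation of the identities of Theorem~\ref{TheoremFinalSeriesIdentities} already in hand.
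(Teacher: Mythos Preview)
Your argument is correct and follows the same route as the paper: specialize the identities of Theorem~\ref{TheoremFinalSeriesIdentities} at $z=\zeta_3$, use $\aqprod{\zeta_3 q,\zeta_3^{-1}q}{q}{\infty}=\aqprod{q^3}{q^3}{\infty}/\aqprod{q}{q}{\infty}$, and observe that the factor $(1-\zeta_3^{k-1})(1-\zeta_3^k)$ kills every $k\not\equiv 2\pmod 3$ while for $k\equiv 2\pmod 3$ the remaining $q$-exponents avoid the forbidden residue class. The paper states this as ``the $q^{3N}$ terms force $k\equiv 0,1\pmod 3$, where the $\zeta_3$-factor vanishes,'' which is the contrapositive of your phrasing; for $A3$, $E2$, $E4$ the paper simply says the argument is the same, whereas you supply the extra step for $E2$ (the Gauss theta evaluation $\aqprod{q}{q}{\infty}\aqprod{q}{q^2}{\infty}=\sum_m(-1)^mq^{m^2}$ to control the residues contributed by the stray $\aqprod{q}{q^2}{\infty}$), which is exactly what is needed there.
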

\begin{proof}
We are to show that $[q^{3n}]S_{A1}(\zeta_3,q) = 0$ for $n\ge 0$.
We note that
\begin{align*}
	\frac{1}{\aqprod{\zeta_3q,\zeta_3^{-1}q}{q}{\infty}}
	&=
	\frac{\aqprod{q}{q}{\infty}}{\aqprod{q^3}{q^3}{\infty}}
.
\end{align*}
By (\ref{FinalA1Series}) we have that
\begin{align*}
	&(1+\zeta_3)(1-\zeta_3)(1-\zeta_3^2)S_{A1}(\zeta_3,q)
	\\
	&=
		\frac{1}{\aqprod{q^3}{q^3}{\infty}}
		\sum_{k=1}^\infty (1-\zeta_3^{k-1})(1-\zeta_3^k)\zeta_3^{1-k}
		(-1)^{k+1}q^{k(k-1)/2} 
		\\&\quad
		\frac{1}{\aqprod{q^3}{q^3}{\infty}}
		\sum_{k=1}^\infty (1-\zeta_3^{k-1})(1-\zeta_3^k)\zeta_3^{1-k}
		\sum_{n=1}^\infty (-1)^{n+k+1} q^{\frac{k(k-1)}{2} + \frac{n(n-3)}{2} + 2kn}(1+q^n)
.
\end{align*}

Upon inspection we find that when the $q^{3N}$ terms occur, we have
either $k\equiv 0 \pmod{3}$ or $k\equiv 1 \pmod{3}$. However for such values of $k$ we have
either $(1-\zeta_3^{k})=0$ or $(1-\zeta_3^{k-1})=0$. 
Thus the coefficient of $q^{3N}$ in $S_{A1}(\zeta_3,q)$ must be zero.
The identities for $M_{A3}$, $M_{E2}$, $M_{E4}$
follow in the same fashion.
\end{proof}

\begin{corollary}
For $n\ge 0$, 
\begin{align*}
	M_{A5}(0,5,5n+4)&=M_{A5}(1,5,5n+4)=M_{A5}(2,5,5n+4)=M_{A5}(3,5,5n+4)
		\\&		
		=M_{A5}(4,5,5n+4)=\frac{1}{5}\sptA{A5}{5n+4}
	,\\
	M_{A7}(0,5,5n+4)&=M_{A7}(1,5,5n+4)=M_{A7}(2,5,5n+4)=M_{A7}(3,5,5n+4)
		\\&=M_{A7}(4,5,5n+4)=\frac{1}{5}\sptA{A7}{5n+4}
	,\\
	M_{C5}(0,5,5n+3)&=M_{C5}(1,5,5n+3)=M_{C5}(2,5,5n+3)=M_{C5}(3,5,5n+3)
		\\&=M_{C5}(4,5,5n+3)=\frac{1}{5}\sptA{C5}{5n+3}
.
\end{align*}
\end{corollary}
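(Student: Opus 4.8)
The plan is to imitate the proof of the preceding corollary. By the general principle recalled in the introduction, since $\zeta_5$ has minimal polynomial $1+x+x^2+x^3+x^4$, it suffices to prove
\begin{align*}
[q^{5n+4}]S_{A5}(\zeta_5,q)=0,\qquad [q^{5n+4}]S_{A7}(\zeta_5,q)=0,\qquad [q^{5n+3}]S_{C5}(\zeta_5,q)=0
\end{align*}
for $n\ge 0$; the stated equalities among the $M_X(k,5,\cdot)$, together with $\sptA{X}{N}=\sum_k M_X(k,5,N)$, then follow immediately. The inputs are the single-series identities (\ref{A5Series}), (\ref{A7Series}), (\ref{C5Series}) of Theorem \ref{TheoremFinalSeriesIdentities} and a $5$-dissection of the denominator $\aqprod{\zeta_5,\zeta_5^{-1}}{q}{\infty}$.

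First I would record the denominator dissection. From $\prod_{i=1}^{4}(1-\zeta_5^{i}x)=1+x+x^2+x^3+x^4$ one gets $\aqprod{\zeta_5,\zeta_5^{-1}}{q}{\infty}\aqprod{\zeta_5^{2},\zeta_5^{-2}}{q}{\infty}=5\,\aqprod{q^{5}}{q^{5}}{\infty}/\aqprod{q}{q}{\infty}$, and the Jacobi Triple Product in the second form quoted just before Corollary \ref{CorollaryTwoVariableProductIdentities} gives $\aqprod{q}{q}{\infty}\aqprod{\zeta_5^{2},\zeta_5^{-2}}{q}{\infty}=(1-\zeta_5^{-2})\sum_{n=-\infty}^{\infty}(-1)^{n}\zeta_5^{2n}q^{n(n-1)/2}$. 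Combining these yields
\begin{align*}
\frac{1}{\aqprod{\zeta_5,\zeta_5^{-1}}{q}{\infty}}=\frac{1-\zeta_5^{-2}}{5\,\aqprod{q^{5}}{q^{5}}{\infty}}\sum_{n=-\infty}^{\infty}(-1)^{n}\zeta_5^{2n}q^{n(n-1)/2}.
\end{align*}
Since $\aqprod{q^{5}}{q^{5}}{\infty}^{-1}$ is supported on exponents divisible by $5$ and $n(n-1)/2$ runs through the residues $0,1,3\pmod 5$, the series $\aqprod{\zeta_5,\zeta_5^{-1}}{q}{\infty}^{-1}$ is supported on exponents $\equiv 0,1,3\pmod 5$.

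Next, setting $z=\zeta_5$ in (\ref{A5Series}), (\ref{A7Series}), (\ref{C5Series}) and dividing by $(1+\zeta_5)\aqprod{\zeta_5,\zeta_5^{-1}}{q}{\infty}$ expresses each of $S_{A5}(\zeta_5,q)$, $S_{A7}(\zeta_5,q)$, $S_{C5}(\zeta_5,q)$ as $\dfrac{1-\zeta_5^{-2}}{5(1+\zeta_5)\aqprod{q^{5}}{q^{5}}{\infty}}$ times the product of $\sum_{n}(-1)^{n}\zeta_5^{2n}q^{n(n-1)/2}$ with the corresponding right-hand series. In the latter the factor $(1-\zeta_5^{k})(1-\zeta_5^{k+1})$ (for $A5$, $A7$), resp.\ $(1-\zeta_5^{k-1})(1-\zeta_5^{k})$ (for $C5$), vanishes on two residue classes of $k$ modulo $5$, so the exponents actually occurring there are $\equiv 0,2\pmod 5$ for $A5$, $\equiv 0,1,2\pmod 5$ for $A7$, and $\equiv 1,4\pmod 5$ for $C5$. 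For $A5$, $\{0,2\}+\{0,1,3\}=\{0,1,2,3\}$ in $\mathbb{Z}/5\mathbb{Z}$, omitting $4$; for $C5$, $\{1,4\}+\{0,1,3\}=\{0,1,2,4\}$, omitting $3$. Multiplying by $\aqprod{q^{5}}{q^{5}}{\infty}^{-1}$ only shifts exponents by multiples of $5$, so $[q^{5n+4}]S_{A5}(\zeta_5,q)=0$ and $[q^{5n+3}]S_{C5}(\zeta_5,q)=0$.

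The case $A7$ is the main obstacle, since there $\{0,1,2\}+\{0,1,3\}$ exhausts $\mathbb{Z}/5\mathbb{Z}$ and the exponent bookkeeping alone no longer suffices. One checks that the residue $4\pmod 5$ is attained only by pairing the $k\equiv 1\pmod 5$ part of the right-hand side of (\ref{A7Series}) at $z=\zeta_5$ against the $n\equiv 3\pmod 5$ part of $\sum_{n}(-1)^{n}\zeta_5^{2n}q^{n(n-1)/2}$. Writing $k=5a+1$ and $n=5b+3$, the root-of-unity factor of each such cross term collapses to a single nonzero constant times $(-1)^{a+b}$, while the exponent becomes $25\bigl(\tfrac{a(3a+1)}{2}+\tfrac{b(b+1)}{2}\bigr)+4$; hence the residue-$4$ part of the product equals that constant times $\bigl(\sum_{a}(-1)^{a}x^{a(3a+1)/2}\bigr)\bigl(\sum_{b}(-1)^{b}x^{b(b+1)/2}\bigr)$ with $x=q^{25}$. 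The second factor is identically $0$, because the terms indexed by $b$ and by $-1-b$ have the common exponent $b(b+1)/2$ but opposite signs. Therefore the residue-$4$ part vanishes, giving $[q^{5n+4}]S_{A7}(\zeta_5,q)=0$. Feeding the three vanishing statements into the general principle yields $M_{X}(0,5,\cdot)=\dots=M_{X}(4,5,\cdot)=\tfrac15\,\sptA{X}{\,\cdot\,}$ for the three progressions in the statement.
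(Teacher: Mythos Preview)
Your proof is correct, and the overall strategy---reduce to showing $[q^{5n+c}]S_X(\zeta_5,q)=0$ via the single-series identities (\ref{A5Series}), (\ref{A7Series}), (\ref{C5Series}) together with a $5$-dissection of the denominator---is exactly the paper's. The difference lies in how the denominator is handled.

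The paper quotes Lemma~3.9 of \cite{Garvan1}, which gives the sharper two-term dissection
\[
\frac{1}{\aqprod{\zeta_5 q,\zeta_5^{-1}q}{q}{\infty}}
=\frac{1}{\jacprod{q^{5}}{q^{25}}}+(\zeta_5+\zeta_5^{-1})\,\frac{q}{\jacprod{q^{10}}{q^{25}}},
\]
supported only on residues $\{0,1\}\pmod 5$. Since $k(3k\pm 1)/2$ never hits $3$ or $4\pmod 5$, both $A5$ and $A7$ follow immediately from $\{0,1,2\}+\{0,1\}=\{0,1,2,3\}$, and $C5$ from $\{0,1,4\}+\{0,1\}$ omitting $3$. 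No case requires extra work.

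Your derivation via the Jacobi triple product is valid but yields the coarser statement that $\aqprod{\zeta_5,\zeta_5^{-1}}{q}{\infty}^{-1}$ is supported on $\{0,1,3\}$. In fact the residue-$3$ part vanishes identically: it comes solely from $n\equiv 3\pmod 5$ in $\sum_n(-1)^n\zeta_5^{2n}q^{n(n-1)/2}$, and writing $n=5b+3$ produces a constant times $\sum_b(-1)^b q^{25b(b+1)/2+3}$, which is zero by the very pairing $b\leftrightarrow -1-b$ you invoke later. So your extra computation for $A7$ is precisely this hidden vanishing, rediscovered after multiplying by the numerator. Had you recorded it once at the level of the denominator, $A7$ would have been as immediate as $A5$ and $C5$. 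Either way the argument is complete; the paper's route is just a bit more economical.
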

\begin{proof}
We are to show for $N\ge 0$ that $[q^{5N+4}]S_{A5}(\zeta_5,q) = 0$.
By Lemma 3.9 of \cite{Garvan1} we have
\begin{align*}
	\frac{1}{\aqprod{\zeta_5q,\zeta_5^{-1}q}{q}{\infty}}
	&=
	\frac{1}{\jacprod{q^5}{q^{25}}}
	+q\frac{(\zeta_5+\zeta_5^{-1})}{\jacprod{q^{10}}{q^{25}}}
.
\end{align*}

By (\ref{A5Series}) we then have
\begin{align*}
	&(1+\zeta_5)(1-\zeta_5)(1-\zeta_5^{-1})S_{A5}(\zeta_5,q)
	\\
	&=
	\frac{1}{\jacprod{q^5}{q^{25}}}
	\sum_{k=-\infty}^\infty (-1)^{k} (1-\zeta_5^k)(1-\zeta_5^{k+1})\zeta_5^{-k}q^{\frac{k(3k+1)}{2}}	
	\\&\quad
	+
	\frac{(\zeta_5+\zeta_5^{-2})q}{\jacprod{q^{10}}{q^{25}}}
	\sum_{k=-\infty}^\infty (-1)^{k} (1-\zeta_5^k)(1-\zeta_5^{k+1})\zeta_5^{-k}q^{\frac{k(3k+1)}{2}}	
.
\end{align*}

However we find that $\frac{k(3k+1)}{2}$ is never congruent to $3$ or $4$ 
modulo $5$, so we see the coefficient of $q^{5N+4}$ in $S_{A5}(\zeta_5,q)$ must
be zero.
The identities for
$M_{A7}$ and $M_{C5}$
follow similarly.

\end{proof}

\begin{corollary}
For $n\ge 0$, 
$M_{A5}(0,7,7n+1)=M_{A5}(1,7,7n+1)=M_{A5}(2,7,7n+1)=M_{A5}(3,7,7n+1)
=M_{A5}(4,7,7n+1)=M_{A5}(5,7,7n+1)=M_{A5}(6,7,7n+1)=\frac{1}{7}\sptA{A5}{7n+1}$.
\end{corollary}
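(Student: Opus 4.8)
The plan is to follow the two preceding corollaries: it suffices to prove $[q^{7N+1}]S_{A5}(\zeta_7,q)=0$ for every $N\ge 0$, for then, exactly as explained in the introduction, the minimal polynomial $1+x+\cdots+x^6$ of $\zeta_7$ forces $M_{A5}(0,7,7n+1)=\cdots=M_{A5}(6,7,7n+1)=\tfrac17\sptA{A5}{7n+1}$. First I would set $z=\zeta_7$ in (\ref{A5Series}); using $\aqprod{\zeta_7,\zeta_7^{-1}}{q}{\infty}=(1-\zeta_7)(1-\zeta_7^{-1})\aqprod{\zeta_7 q,\zeta_7^{-1}q}{q}{\infty}$ this gives
\begin{align*}
	(1+\zeta_7)(1-\zeta_7)(1-\zeta_7^{-1})S_{A5}(\zeta_7,q)
	&=
	\frac{1}{\aqprod{\zeta_7 q,\zeta_7^{-1}q}{q}{\infty}}
	\sum_{k=-\infty}^\infty (-1)^k(1-\zeta_7^k)(1-\zeta_7^{k+1})\zeta_7^{-k}q^{\frac{k(3k+1)}{2}}
	.
\end{align*}
In the theta series on the right every term with $k\equiv 0$ or $k\equiv 6\pmod 7$ vanishes, and for $k\equiv 1,2,3,4,5\pmod 7$ the exponent $\tfrac{k(3k+1)}{2}\bmod 7$ depends only on $k\bmod 7$ and equals $2,0,1,5,5$ respectively; hence that series is a sum of five theta functions in $q^7$, supported on the residue classes $0$, $1$, $2$, $5\pmod 7$.

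The next step is to substitute a $7$-dissection of $\frac{1}{\aqprod{\zeta_7 q,\zeta_7^{-1}q}{q}{\infty}}$, the $p=7$ analogue of the $p=5$ dissection (Lemma 3.9 of \cite{Garvan1}) used in the proof above. Such a dissection, $\frac{1}{\aqprod{\zeta_7 q,\zeta_7^{-1}q}{q}{\infty}}=\sum_{i=0}^6 q^i B_i(q^7)$ with each $B_i$ an explicit theta quotient whose coefficients lie in $\mathbb Z[\zeta_7]$, is available in the literature and can also be produced from the Jacobi Triple Product Identity together with the factorization $\aqprod{q^7}{q^7}{\infty}=\aqprod{q}{q}{\infty}\prod_{k=1}^{3}\aqprod{\zeta_7^k q,\zeta_7^{-k}q}{q}{\infty}$. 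Multiplying the two dissections and retaining only the terms whose total exponent is $\equiv 1\pmod 7$, the assertion $[q^{7N+1}]S_{A5}(\zeta_7,q)=0$ becomes a finite $\mathbb Z[\zeta_7]$-linear identity among products of theta functions in $q^7$.

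The step I expect to be the main obstacle is exactly this last identity. In the $p=5$ case the theta residues (namely $0$ and $2$) and the dissection residues (namely $0$ and $1$) never sum to $4\pmod 5$, so that argument was a one-line exponent count; here the theta series already hits residue $1\pmod 7$ while $B_0(q^7)$ contributes residue $0$, so the $q^{7N+1}$-part is an a priori nontrivial combination of the four products in which a theta-component of residue $a$ meets $B_b$ with $a+b\equiv 1\pmod 7$, that is $(a,b)\in\{(0,1),(1,0),(2,6),(5,3)\}$, and one must show it collapses to zero. I would organize the computation by the residue class of the summation index $k$ and by which component $B_i$ occurs, factor out the $(1-\zeta_7^j)$'s appearing in the theta coefficients, and reduce modulo $1+\zeta_7+\cdots+\zeta_7^6=0$ to a handful of theta-function identities (consequences of the Jacobi Triple Product Identity) forcing the combination to vanish. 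Once $[q^{7N+1}]S_{A5}(\zeta_7,q)=0$ is established, the congruence $\sptA{A5}{7n+1}\equiv 0\pmod 7$ and the asserted equalities among the $M_{A5}(r,7,7n+1)$ follow precisely as in the earlier corollaries.
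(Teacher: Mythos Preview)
Your strategy is essentially the paper's: reduce to $[q^{7N+1}]S_{A5}(\zeta_7,q)=0$, set $z=\zeta_7$ in (\ref{A5Series}), dissect the theta series by $k\bmod 7$, dissect the remaining product factor, and collect the residue-$1$ contributions. Your residue computations are correct, and you correctly anticipate that---unlike the $p=5$ case---a genuine cancellation must be proved.

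Two points where the paper differs in execution and where your sketch is optimistic. First, rather than seek a $7$-dissection of $\frac{1}{\aqprod{\zeta_7 q,\zeta_7^{-1}q}{q}{\infty}}$ (the $p=7$ analogue of Lemma~3.9 of \cite{Garvan1} is not nearly as short as the two-term $p=5$ formula), the paper multiplies through by $\aqprod{q}{q}{\infty}$ so as to use the known $7$-dissection of the crank $C(\zeta_7,q)=\frac{\aqprod{q}{q}{\infty}}{\aqprod{\zeta_7 q,\zeta_7^{-1}q}{q}{\infty}}$ from Theorem~5.1 of \cite{Garvan1}, together with the $7$-dissection of $\aqprod{q}{q}{\infty}$ from Euler's pentagonal theorem. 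In fact it does more than you propose: it writes down the full $7$-dissection of $S_{A5}(\zeta_7,q)$ (not just the $q^{7N+1}$ piece) and verifies all seven residue classes.

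Second---and this is the real gap in your plan---the resulting identities are \emph{not} routine consequences of the Jacobi Triple Product. After matching residues one is left, in each class, with an identity among eight to ten generalized eta quotients with coefficients in $\mathbb{Z}[\zeta_7]$. The paper handles these by recognizing each quotient as a modular function on $\Gamma_1(21)$ (via \cite{Robins}), bounding orders at the cusps, and invoking the valence formula so that checking a finite number of Taylor coefficients suffices. Your hope of ``factoring out the $(1-\zeta_7^j)$'s and reducing to JTP'' will not close the argument; you should expect to need the modular-function machinery here.
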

\begin{proof}
We are to show for $N\ge 0$ that $[q^{7N+1}]S_{A5}(\zeta_7,q) = 0$.
To do this we actually find the $7$-dissection of $S_{A5}(\zeta_7,q)$ and
see there are no $q^{7N+1}$ terms.
We claim that
\begin{align}\label{Id1For7n1Corollary}
	S_{A5}(\zeta_7,q)
	&= \aqprod{q^{49}}{q^{49}}{\infty}
	\left(
		-(1+\zeta+\zeta^6)\frac{q^{14}}{\jacprod{q^{42},q^{49},q^{56}}{q^{147}}}	
		+q^2\frac{\jacprod{q^{14}}{q^{49}}}{\jacprod{q^7,q^{21}}{q^{49}}}
		\right.\nonumber\\&\left.\quad
		-(1+\zeta^2+\zeta^5)\frac{q^9}{\jacprod{q^{21},q^{49},q^{70}}{q^{147}}}	
		+(\zeta_7+\zeta_7^6)\frac{q^3}{\jacprod{q^{14}}{q^{49}}}
		\right.\nonumber\\&\left.\quad
		+(1+\zeta_7+\zeta_7^2+\zeta_7^5+\zeta_7^6)\frac{q^4}{\jacprod{q^{21}}{q^{49}}}
		+(\zeta+\zeta^6)q^5\frac{\jacprod{q^{35}}{q^{147}}}{\jacprod{q^{21},q^{28},q^{49},q^{49}}{q^{147}}}
		\right.\nonumber\\&\left.\quad
		+(\zeta+\zeta^6)q^{19}\frac{\jacprod{q^{14}}{q^{147}}}{\jacprod{q^{21},q^{49},q^{49},q^{70}}{q^{147}}}			
		+(2+\zeta_7^2+\zeta_7^5)\frac{q^6}{\jacprod{q^{14},q^{49},q^{63}}{q^{147}}}	
	\right)
,
\end{align}
which we see has no terms of the form $q^{7N+1}$. To begin we have by 
(\ref{A5Series}) that
\begin{align*}
	S_{A5}(\zeta_7,q)
	&=
		\frac{1}{\aqprod{\zeta_7q,\zeta_7^{-1}q}{q}{\infty}}
		\sum_{n=-\infty}^\infty	
			\frac{(-1)^n(1-\zeta_7^n)(1-\zeta_7^{n+1})}{(1+\zeta_7)(1-\zeta_7)(1-\zeta_7^{-1})}
		\zeta_7^{-n}q^{\frac{n(3n+1)}{2}}
,
\end{align*}
so we can instead verify
\begin{align}\label{Id2For7n1Corollary}
	&\frac{\aqprod{q}{q}{\infty}}{\aqprod{\zeta_7q,\zeta_7^{-1}q}{q}{\infty}}
	\sum_{n=-\infty}^\infty	
		\frac{(-1)^n(1-\zeta_7^n)(1-\zeta_7^{n+1})}{(1+\zeta_7)(1-\zeta_7)(1-\zeta_7^{-1})}
		\zeta_7^{-n}q^{\frac{n(3n+1)}{2}}
	\nonumber\\
	&= \aqprod{q}{q}{\infty}\aqprod{q^{49}}{q^{49}}{\infty}
	\left(
		-(1+\zeta+\zeta^6)\frac{q^{14}}{\jacprod{q^{42},q^{49},q^{56}}{q^{147}}}	
		+q^2\frac{\jacprod{q^{14}}{q^{49}}}{\jacprod{q^7,q^{21}}{q^{49}}}
		\right.\nonumber\\&\left.\quad
		-(1+\zeta^2+\zeta^5)\frac{q^9}{\jacprod{q^{21},q^{49},q^{70}}{q^{147}}}	
		+(\zeta_7+\zeta_7^6)\frac{q^3}{\jacprod{q^{14}}{q^{49}}}
		\right.\nonumber\\&\left.\quad
		+(1+\zeta_7+\zeta_7^2+\zeta_7^5+\zeta_7^6)\frac{q^4}{\jacprod{q^{21}}{q^{49}}}
		+(\zeta+\zeta^6)q^5\frac{\jacprod{q^{35}}{q^{147}}}{\jacprod{q^{21},q^{28},q^{49},q^{49}}{q^{147}}}
		\right.\nonumber\\&\left.\quad
		+(\zeta+\zeta^6)q^{19}\frac{\jacprod{q^{14}}{q^{147}}}{\jacprod{q^{21},q^{49},q^{49},q^{70}}{q^{147}}}			
		+(2+\zeta_7^2+\zeta_7^5)\frac{q^6}{\jacprod{q^{14},q^{49},q^{63}}{q^{147}}}	
	\right)
.
\end{align}

For the series we have
\begin{align}\label{EqA5Zeta7SeriesTerm}
	&\sum_{n=-\infty}^\infty	
		\frac{(-1)^n(1-\zeta_7^n)(1-\zeta_7^{n+1})}{(1+\zeta_7)(1-\zeta_7)(1-\zeta_7^{-1})}
		\zeta_7^{-n}q^{\frac{n(3n+1)}{2}}
	\nonumber\\
	&=
	\sum_{k=-3}^3\sum_{n=-\infty}^\infty
		\frac{(-1)^{n+k}(1-\zeta_7^k)(1-\zeta_7^{k+1})\zeta_7^{-k}}{(1+\zeta_7)(1-\zeta_7)(1-\zeta_7^{-1})}
		q^{\frac{(7n+k)(3(7n+k)+1)}{2}}
	\nonumber\\
	&=
	\sum_{k=-3}^3
		\frac{(-1)^{k}(1-\zeta_7^k)(1-\zeta_7^{k+1})\zeta_7^{-k}}{(1+\zeta_7)(1-\zeta_7)(1-\zeta_7^{-1})}
		q^{\frac{k(3k+1)}{2}}
	\sum_{n=-\infty}^\infty
		(-1)^{n}q^{21nk+77}q^{\frac{147n(n-1)}{2}}
	\nonumber\\
	&=
		(1+\zeta_7+\zeta_7^6)q^{12}\jacprod{q^{14}}{q^{147}}
			\aqprod{q^{147}}{q^{147}}{\infty}
		-q^{5}\jacprod{q^{35}}{q^{147}}
			\aqprod{q^{147}}{q^{147}}{\infty}
		\nonumber\\&\quad
		+q^{2}\aqprod{q^{49}}{q^{49}}{\infty}
		-(1+\zeta_7+\zeta_7^6)q^{7}\jacprod{q^{119}}{q^{147}}
			\aqprod{q^{147}}{q^{147}}{\infty}
		\nonumber\\&\quad
		+(1-\zeta_7^3-\zeta_7^4)q^{15}\jacprod{q^{140}}{q^{147}}
			\aqprod{q^{147}}{q^{147}}{\infty}
.
\end{align}
The last line of the above equality follows from the Jacobi Triple Product Identity.

By Theorem 5.1 of \cite{Garvan1} we have
\begin{align}\label{EqA5Zeta7CrankTerm}
	&\frac{\aqprod{q}{q}{\infty}}
		{\aqprod{\zeta_7q,\zeta_7^{-1}q}{q}{\infty}}
	\\
	&=
	\aqprod{q^{49}}{q^{49}}{\infty}
	\left(
		\frac{\jacprod{q^{21}}{q^{49}}}{\jacprod{q^{7},q^{14}}{q^{49}}}
		+(\zeta_7+\zeta_7^6-1)q\frac{1}{\jacprod{q^{7}}{q^{49}}}
		+(\zeta_7^2+\zeta_7^5)q^2\frac{\jacprod{q^{14}}{q^{49}}}{\jacprod{q^7,q^{21}}{q^{49}}}
		\right.\nonumber\\&\left.\quad
		+(\zeta_7^3+\zeta_7^4+1)q^3\frac{1}{\jacprod{q^{14}}{q^{49}}}
		-(\zeta_7+\zeta_7^6)q^4\frac{1}{\jacprod{q^{21}}{q^{49}}}
	%	\right.\\&\left.\quad
		-(\zeta_7^2+\zeta_7^5+1)q^6\frac{\jacprod{q^{7}}{q^{49}}}{\jacprod{q^{14},q^{21}}{q^{49}}}
	\right)
.
\end{align}

By Euler's Pentagonal Numbers Theorem and the Jacobi Triple Product Identity we have
\begin{align}\label{Id3For7n1Corollary}
	\aqprod{q}{q}{\infty}
	&=
	\aqprod{q^{49}}{q^{49}}{\infty}
	\left(
		\frac{\jacprod{q^{14}}{q^{49}}}{\jacprod{q^{7}}{q^{49}}}
		- q\frac{\jacprod{q^{21}}{q^{49}}}{\jacprod{q^{14}}{q^{49}}}
		- q^2
		+ q^5\frac{\jacprod{q^{7}}{q^{49}}}{\jacprod{q^{21}}{q^{49}}}
	\right)
.
\end{align}

To verify (\ref{Id2For7n1Corollary}), we multiply 
the right hand side of (\ref{Id1For7n1Corollary}) by (\ref{Id3For7n1Corollary})
collect the $q^{7N+k}$ terms, for $0\le k\le 6$, and verify each of those
is equal to the corresponding term from multiplying 
(\ref{EqA5Zeta7SeriesTerm}) by (\ref{EqA5Zeta7CrankTerm}). We do not
include the full details, but as an example, in verifying the
$q^{7N}$ terms match, we find we are to prove the following identity:
\begin{align}\label{IdMess7n1}
	%LHS is \aqprod{q}{q}{\infty}*SA5(\zeta,q)
	%7n terms 
	&	\aqprod{q^{49}}{q^{49}}{\infty}^2
		\left(
		%0 * 0 terms
		-(1+\zeta+\zeta^6)q^{14}
		\frac{\jacprod{q^{14}}{q^{49}}}
			{\jacprod{q^{7}}{q^{49}}\jacprod{q^{42},q^{49},q^{56}}{q^{147}}}
		%1 * 6 terms
		\right.\nonumber\\&\quad
		- 
		(2+\zeta_7^2+\zeta_7^5)
		q^7\frac{\jacprod{q^{21}}{q^{49}}}
			{\jacprod{q^{14}}{q^{49}}\jacprod{q^{14},q^{49},q^{63}}{q^{147}}}	
		%2 * 5 terms
		- 
		(\zeta+\zeta^6)q^7\frac{\jacprod{q^{35}}{q^{147}}}
			{\jacprod{q^{21},q^{28},q^{49},q^{49}}{q^{147}}}
		\nonumber\\&\quad
		- 
		(\zeta+\zeta^6)q^{21}\frac{\jacprod{q^{14}}{q^{147}}}
			{\jacprod{q^{21},q^{49},q^{49},q^{70}}{q^{147}}}			
		%5 * 2 terms
		+
		q^7\frac{\jacprod{q^{7},q^{14}}{q^{49}}}{\jacprod{q^7,q^{21},q^{21}}{q^{49}}}
		\nonumber\\&\quad\left.
		-
		(1+\zeta^2+\zeta^5)q^{14}
		\frac{\jacprod{q^{7}}{q^{49}}}
			{\jacprod{q^{21}}{q^{49}}\jacprod{q^{21},q^{49},q^{70}}{q^{147}}}
		\right)
	\nonumber\\
	&=
	%RHS is crank*series
	\aqprod{q^{49}}{q^{49}}{\infty}\aqprod{q^{147}}{q^{147}}{\infty}
	\left(
		%0 * 0
		-(1+\zeta_7+\zeta_7^6)q^{7}
		\frac{\jacprod{q^{21}}{q^{49}}\jacprod{q^{28}}{q^{147}}}{\jacprod{q^{7},q^{14}}{q^{49}}}
		% 2 * 5
		-
		q^{14}
		\frac{\jacprod{q^{14}}{q^{49}}\jacprod{q^{14}}{q^{147}}}
			{\jacprod{q^7,q^{21}}{q^{49}}}
		\right.\nonumber\\&\quad
		-
		(\zeta_7^2+\zeta_7^5)q^7
		\frac{\jacprod{q^{14}}{q^{49}}\jacprod{q^{35}}{q^{147}}}
			{\jacprod{q^7,q^{21}}{q^{49}}}
		% 6 * 1
%		\nonumber\\&\quad
		\left.
		-(\zeta_7^2+\zeta_7^5+2)q^{21}
		\frac{\jacprod{q^{7}}{q^{49}}\jacprod{q^{7}}{q^{147}}}
		{\jacprod{q^{14},q^{21}}{q^{49}}}
	\right)
.
\end{align}
Dividing both sides by 
$-\aqprod{q^{49}}{q^{49}}{\infty}^2q^{14}
	\frac{\jacprod{q^{14}}{q^{49}}}
	{\jacprod{q^{7}}{q^{49}}\jacprod{q^{42},q^{49},q^{56}}{q^{147}}}
$
and then replacing $q^7$ by $q$, we find (\ref{IdMess7n1}) to be equivalent to
\begin{align*}
	&	
		(1+\zeta+\zeta^6)
		+ 
			(2+\zeta_7^2+\zeta_7^5)q^{-1}
			\frac{\jacprod{q,q^{3}}{q^{7}}\jacprod{q^{6},q^{8}}{q^{21}}}
				{\jacprod{q^{2},q^2}{q^{7}}\jacprod{q^{2},q^{9}}{q^{21}}  }	
		\\&\quad
		+ 
			(\zeta+\zeta^6)q^{-1}
			\frac{\jacprod{q}{q^{7}} \jacprod{q^{5},q^{6},q^{8}}{q^{21}}}
				{\jacprod{q^{2}}{q^{7}} \jacprod{q^{3},q^{4},q^{7}}{q^{21}}   }
		+ 
			(\zeta+\zeta^6)q
			\frac{\jacprod{q}{q^{7}} \jacprod{q^2, q^{6},q^{8}}{q^{21}}}
			{\jacprod{q^{2}}{q^{7}} \jacprod{q^{3},q^{7},q^{10}}{q^{21}}  }			
		\\&\quad
		-
			q^{-1}
			\frac{\jacprod{q,q}{q^{7}} \jacprod{q^{6},q^{7},q^{8}}{q^{21}}}
			{\jacprod{q,q^{3},q^{3}}{q^{7}}  }
		+
			(1+\zeta^2+\zeta^5)
			\frac{\jacprod{q,q}{q^{7}} \jacprod{q^{6},q^{8}}{q^{21}}}
				{\jacprod{q^2,q^3}{q^{7}}\jacprod{q^{3},q^{10}}{q^{21}} }
	\\
	&=
			(1+\zeta_7+\zeta_7^6)q^{-1}
			\frac{\jacprod{q,q^{3}}{q^{7}} \jacprod{q^4,q^{6},q^{8}}{q^{21}}}
				{\jacprod{q,q^{2},q^2}{q^{7}}}
		% 2 * 5
		+
			\frac{\jacprod{q}{q^{7}} \jacprod{q^2,q^{6},q^{8}}{q^{21}}}
				{\jacprod{q,q^{3}}{q^{7}}   }
		\\&\quad
		+
			(\zeta_7^2+\zeta_7^5)q^{-1}
			\frac{\jacprod{q}{q^{7}}\jacprod{q^5,q^{6},q^{8}}{q^{21}}}
				{\jacprod{q,q^{3}}{q^{7}} }
		+
			(\zeta_7^2+\zeta_7^5+2)q
			\frac{\jacprod{q,q}{q^{7}} \jacprod{q,q^{6},q^{8}}{q^{21}}}
				{\jacprod{q^2,q^{2},q^{3}}{q^{7}} }
.
\end{align*}
In this form each of the ten terms is a modular function with respect to
$\Gamma_1(21)$ by Theorem 3 of \cite{Robins}. By moving all ten terms to
one side of the equation, we are to verify a modular function is identically
zero. We do this by checking that otherwise this modular function would violate
the valence formula. 
Using Theorem 4 of \cite{Robins} we compute the order at each cusp of 
$\Gamma_1(21)$, not equivalent to infinity,
of each generalized eta quotient. At each cusp we taking the minimal order at 
that cusp among the generalized eta quotients. Summing these minimal orders 
gives $-13$. By the valence formula, if the order at $\infty$ is $14$ or larger, 
the sum of generalized eta quotients is identically zero. This is
easily verified with Maple. 

We then repeat this process for the other six values of $k$. In each case we have
to prove an identity between several infinite products. Dividing by one of the terms yields
an identity between modular functions with respect to $\Gamma_1(21)$. We examine 
the orders of each modular function at the cusps to and find we must show the order
at $\infty$ is larger than some number.
In all cases this number is rather small, with $13$ being the largest.

\end{proof}

%%%%%%%%%%%%%%%%%%%%%%%%%%%%%%%%%%%%%%%%%%%%%%%%%%%%%%%%%%%%%%%%%%%%%%%%%%%%%%%
%%%%%%%%%%%%%%%%%%%%%%%%%%%%%%%%%%%%%%%%%%%%%%%%%%%%%%%%%%%%%%%%%%%%%%%%%%%%%%%
%%%%%%%%%%%%%%%%%%%%%%%%%%%%%%%%%%%%%%%%%%%%%%%%%%%%%%%%%%%%%%%%%%%%%%%%%%%%%%%
\section{Dissection Formulas}

We recall we are to find the $3$-dissections of 
$S_{E2}(\zeta_3,q)$ and $S_{E4}(\zeta_3,q)$, and the $5$-dissections of 
$S_{C1}(\zeta_5,q)$ and  $S_{C5}(\zeta_5,q)$.
We prove Theorem \ref{TheoremAllDissections} by relating
each series $S_X(z,q)$ to an appropriate rank-like and crank-like function, both
of which have dissections that are either known or easy to deduce.

We recall $R(z,q)$ is the generating function for the rank of ordinary 
partitions. By \cite{Watson}, one form of the generating function of the 
rank is
\begin{align}
	R(z,q) &=
	\sum_{n=0}^\infty\sum_{m=-\infty}^\infty N(m,n)z^mq^n
	=
	\frac{1}{\aqprod{q}{q}{\infty}}\left(
		1+\sum_{n=1}^\infty \frac{(1-z)(1-z^{-1})(-1)^nq^{\frac{n(3n+1)}{2}}(1+q^n)}
		{(1-zq^n)(1-z^{-1}q^n)}
	\right)
.
\end{align}
We recall $C(z,q)$ is the generating function for the crank of ordinary 
partitions. By \cite{Garvan1}, two forms of the generating function 
are given by
\begin{align*}
	C(z,q)
	&=
	\frac{\aqprod{q}{q}{\infty}}{\aqprod{zq,z^{-1}q}{q}{\infty}}
	=
	\frac{1}{\aqprod{q}{q}{\infty}}\left(
		1+\sum_{n=1}^\infty \frac{(1-z)(1-z^{-1})(-1)^nq^{\frac{n(n+1)}{2}}(1+q^n)}
		{(1-zq^n)(1-z^{-1}q^n)}
	\right)
.
\end{align*}
We also define two series of a similar form to that of $R(z,q)$ and $C(z,q)$,
\begin{align*}
	R_1(z,q) 
	&= 
	\frac{\aqprod{-q}{q}{\infty}}{\aqprod{q}{q}{\infty}}
	\left(
		1+2\sum_{n=1}^\infty \frac{(1-z)(1-z^{-1})(-1)^nq^n}{(1-zq^n)(1-z^{-1}q^n)}
	\right)
	,\\
	R_2(z,q) 
	&= 
	\frac{\aqprod{-q}{q}{\infty}}{\aqprod{q}{q}{\infty}}
	\left(
		1+\sum_{n=1}^\infty \frac{(1-z)(1-z^{-1})(-1)^nq^{n^2}(1+q^{2n})}{(1-zq^n)(1-z^{-1}q^n)}
	\right)
.
\end{align*}
We used $R_1(z,q)$ in \cite{GarvanJennings} in proving dissections similar
to what we are doing here, and as we will see shortly $R_2(z,q)$ is essentially
the rank of an overpartition.

\begin{proposition}\label{TheoremRankCrankDifferences}
\begin{align}
	\label{C1RankCrankDifference}
	S_{C1}(z,q)	
	&=
	\frac{1}{(1-z)(1-z^{-1})}\left( 
		R(z,q^2) - 
		\aqprod{q}{q^2}{\infty}C(z,q)
	\right)
	,\\
	\label{C5RankCrankDifference}
	S_{C5}(z,q)	
	&=
	\frac{1}{(1-z)(1-z^{-1})}\left( 
		C(z,q^2) - 
		\aqprod{q}{q^2}{\infty}C(z,q)
	\right)
	,\\
	\label{E2RankCrankDifference}
	S_{E2}(z,q)	
	&=
	\frac{1}{(1-z)(1-z^{-1})}\left( 
		R_1(z,q) - 
		\aqprod{-q}{q}{\infty}C(z,q)
	\right)
	,\\
	\label{E4RankCrankDifference}
	S_{E4}(z,q)	
	&=
	\frac{1}{(1-z)(1-z^{-1})}\left( 
		R_2(z,q) - 
		\aqprod{-q}{q}{\infty}C(z,q)
	\right)
.
\end{align}
\end{proposition}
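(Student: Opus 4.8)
The plan is to obtain all four identities at once from a single application of Bailey's Lemma with the specialization $\rho_1=z$, $\rho_2=z^{-1}$ --- the same device used to prove (\ref{EqIntro1}). For each $X\in\{C1,C5,E2,E4\}$ the first series form of $S_X(z,q)$ shows that $S_X(z,q)=\frac{P_X(q)}{\aqprod{z,z^{-1}}{q}{\infty}}\sum_{n=1}^\infty\aqprod{z,z^{-1}}{q}{n}q^n\beta^X_n$, where $(\alpha^X,\beta^X)$ is the corresponding Bailey pair relative to $(1,q)$ listed in Section~2; thus the summand is exactly $\aqprod{z,z^{-1}}{q}{n}(aq/\rho_1\rho_2)^n\beta^X_n$ with $a=1$ (the only case needing a line of arithmetic is $X=C5$, where $q^n\beta^{C5}_n=q^{\,n+(n^2-n)/2}/(\aqprod{q}{q^2}{n}\aqprod{q}{q}{n})=q^{(n^2+n)/2}/(\aqprod{q}{q^2}{n}\aqprod{q}{q}{n})$). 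I would write $\sum_{n\ge1}=\sum_{n\ge0}-1$ (the $n=0$ term of the full sum being $\beta^X_0=1$), apply Bailey's Lemma to $\sum_{n\ge0}$, and simplify using $\aqprod{q/z,qz}{q}{\infty}/\aqprod{z,z^{-1}}{q}{\infty}=1/\bigl((1-z)(1-z^{-1})\bigr)$ together with $\aqprod{z,z^{-1}}{q}{n}/\aqprod{q/z,qz}{q}{n}=(1-z)(1-z^{-1})/\bigl((1-zq^n)(1-z^{-1}q^n)\bigr)$ for $n\ge1$. Rewriting the leftover $n=0$ term with $C(z,q)=\aqprod{q}{q}{\infty}/\aqprod{zq,z^{-1}q}{q}{\infty}$ yields the uniform identity
\[
	(1-z)(1-z^{-1})S_X(z,q)
	=\frac{P_X(q)}{\aqprod{q}{q}{\infty}^2}\left(1+\sum_{n=1}^\infty\frac{(1-z)(1-z^{-1})q^n\alpha^X_n}{(1-zq^n)(1-z^{-1}q^n)}\right)
	-\frac{P_X(q)}{\aqprod{q}{q}{\infty}}C(z,q).
\]

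It then remains to identify the two pieces for each $X$. The crank-type piece follows from the elementary identities $\aqprod{q}{q}{\infty}=\aqprod{q}{q^2}{\infty}\aqprod{q^2}{q^2}{\infty}$ and $\aqprod{-q}{q}{\infty}=\aqprod{q^2}{q^2}{\infty}/\aqprod{q}{q}{\infty}$, which give $P_{C1}(q)/\aqprod{q}{q}{\infty}=P_{C5}(q)/\aqprod{q}{q}{\infty}=\aqprod{q}{q^2}{\infty}$ and $P_{E2}(q)/\aqprod{q}{q}{\infty}=P_{E4}(q)/\aqprod{q}{q}{\infty}=\aqprod{-q}{q}{\infty}$, exactly the crank coefficients in (\ref{C1RankCrankDifference})--(\ref{E4RankCrankDifference}). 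For the rank-type piece I would substitute the explicit $\alpha^X$. When $X\in\{C1,C5\}$ only even indices survive, so putting $n=2k$ collapses $q^n$ to $q^{2k}$ and the denominator to $(1-zq^{2k})(1-z^{-1}q^{2k})$; combined with $P_X(q)/\aqprod{q}{q}{\infty}^2=1/\aqprod{q^2}{q^2}{\infty}$ and $q^{2k}\alpha^{C1}_{2k}=(-1)^kq^{3k^2+k}(1+q^{2k})$, $q^{2k}\alpha^{C5}_{2k}=(-1)^kq^{k^2+k}(1+q^{2k})$, this piece becomes $R(z,q^2)$ when $X=C1$ and $C(z,q^2)$ when $X=C5$, via the Watson representation of the rank and the Garvan representation of the crank recalled just before the proposition (using $k(3k+1)=3k^2+k$ and $k(k+1)=k^2+k$). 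For $X=E2$, $\alpha^{E2}_n=2(-1)^n$ ($n\ge1$) together with $P_{E2}(q)/\aqprod{q}{q}{\infty}^2=\aqprod{-q}{q}{\infty}/\aqprod{q}{q}{\infty}$ turns the piece verbatim into the series defining $R_1(z,q)$; for $X=E4$, $q^n\alpha^{E4}_n=(-1)^nq^{n^2}(1+q^{2n})$ ($n\ge1$) turns it verbatim into the series defining $R_2(z,q)$. Assembling the two pieces in each case gives (\ref{C1RankCrankDifference})--(\ref{E4RankCrankDifference}).

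The argument is routine throughout; the one point demanding attention is matching the $q$-exponents after the substitution $n=2k$ for the $C$ pairs --- recognizing $3k^2+k$ and $k^2+k$ as the quadratic forms appearing in $R(z,q^2)$ and $C(z,q^2)$ --- and, for the $E$ pairs, noticing that the prefactor $\aqprod{-q}{q}{\infty}/\aqprod{q}{q}{\infty}$ produced by Bailey's Lemma is precisely the one built into the definitions of $R_1(z,q)$ and $R_2(z,q)$. There is no real obstacle here: this is the same specialization of Bailey's Lemma that gives (\ref{EqIntro1}), now applied to four further Bailey pairs relative to $(1,q)$.
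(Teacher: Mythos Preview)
Your proposal is correct and follows essentially the same approach as the paper: apply Bailey's Lemma with $\rho_1=z$, $\rho_2=z^{-1}$, $a=1$ to each Bailey pair, subtract off the $n=0$ term to produce the crank piece, and identify the remaining $\alpha$-side series as $R(z,q^2)$, $C(z,q^2)$, $R_1(z,q)$, $R_2(z,q)$ respectively. The paper writes out only the $C1$ case explicitly and says the others are analogous; you have simply packaged all four into a uniform formula before specializing, which is a clean presentation but not a different argument.
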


We note that $\aqprod{-q}{q}{\infty}C(z,q) 
= \frac{\aqprod{-q,q}{q}{\infty}}{\aqprod{zq,z^{-1}q}{q}{\infty}}$ is a 
residual crank studied in \cite{BLO2} and \cite{GarvanJennings}.
Most of these functions have known dissections when $z$ is either a primitive 
third or fifth root of unity.

By taking subtracting the expressions for $S_{C1}(z,q)$ and
$S_{C5}(z,q)$ in Proposition \ref{TheoremRankCrankDifferences}
and using (\ref{EqIntro1}),
we have Corollary \ref{CorollaryC1AndC5Identities}.

\begin{proposition}\label{PropositionRankCrankDissections}
\begin{align}
	\label{EquationRank5Dissection}
	&R(\zeta_5,q)
	=
		\frac{\aqprod{q^{25}}{q^{25}}{\infty}\jacprod{q^{10}}{q^{25}}}
			{\jacprod{q^{5}}{q^{25}}^2}
		+
		q^5\frac{\zeta_5+\zeta_5^{-1}-2}{\aqprod{q^{25}}{q^{25}}{\infty}}
			\sum_{n=-\infty}^\infty \frac{(-1)^nq^{75n(n+1)/2}}{1-q^{25n+5}}
		+
		q\frac{\aqprod{q^{25}}{q^{25}}{\infty}}{\jacprod{q^5}{q^{25}}}
		\nonumber\\&\quad
		+
		q^2(\zeta_5+\zeta_5^{-1})\frac{\aqprod{q^{25}}{q^{25}}{\infty}}
			{\jacprod{q^{10}}{q^{25}}}
		-
		q^3(\zeta_5+\zeta_5^{-1})\frac{\aqprod{q^{25}}{q^{25}}{\infty}\jacprod{q^5}{q^{25}}}
			{\jacprod{q^{10}}{q^{25}}^2}
		\nonumber\\&\quad
		-
		q^8\frac{2\zeta_5+2\zeta_5^{-1}+1}{\aqprod{q^{25}}{q^{25}}{\infty}}
			\sum_{n=-\infty}^\infty \frac{(-1)^nq^{75n(n+1)/2}}{1-q^{25n+10}}
	,\\
	\label{EquationCrank5Dissection}
	&C(\zeta_5,q)	
	=
		\aqprod{q^{25}}{q^{25}}{\infty}
		\left(
			\frac{\jacprod{q^{10}}{q^{25}}}{\jacprod{q^{5}}{q^{25}}^2}
			+(\zeta_5+\zeta_5^4-1)q\frac{1}{\jacprod{q^{5}}{q^{25}}}
			-(\zeta_5+\zeta_5^4+1)q^2\frac{1}{\jacprod{q^{10}}{q^{25}}}
			\right.\nonumber\\&\left.\quad
			-(\zeta_5+\zeta_5^4)q^3\frac{\jacprod{q^{5}}{q^{25}}}{\jacprod{q^{10}}{q^{25}}^2}
		\right)
	,\\
	\label{ThreeDissectionExtraSeries}
	&R_1(\zeta_3,q)
	= 
		\frac{\aqprod{q^9}{q^9}{\infty}^4\aqprod{q^6}{q^6}{\infty}}
			{\aqprod{q^3}{q^3}{\infty}^2\aqprod{q^{18}}{q^{18}}{\infty}^2}
		-
		4q\frac{\aqprod{q^{18}}{q^{18}}{\infty}\aqprod{q^9}{q^9}{\infty}}
			{\aqprod{q^3}{q^3}{\infty}}
		+
		q^2\frac{\aqprod{q^{18}}{q^{18}}{\infty}^4}
			{\aqprod{q^9}{q^9}{\infty}^2\aqprod{q^6}{q^6}{\infty}}
	,\\
	\label{TheoremExtraRank3Dissection}
	&R_2(\zeta_3,q) 
	=
		\frac{3}{2}
		-\frac{\aqprod{q^9}{q^9}{\infty}^4\aqprod{q^6}{q^6}{\infty}}
			{2\aqprod{q^{18}}{q^{18}}{\infty}^2\aqprod{q^3}{q^3}{\infty}^2}
		-
		q\frac{\aqprod{q^9}{q^9}{\infty}\aqprod{q^{18}}{q^{18}}{\infty}}
			{\aqprod{q^3}{q^3}{\infty}}
		-
		2q^2\frac{\aqprod{q^{18}}{q^{18}}{\infty}^4}
			{\aqprod{q^9}{q^9}{\infty}^2\aqprod{q^6}{q^6}{\infty}}
		\nonumber\\&\quad
		+
		3q^2\frac{\aqprod{q^{18}}{q^{18}}{\infty}}{\aqprod{q^9}{q^9}{\infty}^2}
		\sum_{n=-\infty}^\infty\frac{(-1)^nq^{9n^2+9n}}{1-q^{9n+3}} 
	,\\
	\label{TheoremNewCrankDissection}
	&\aqprod{q}{q^2}{\infty}C(\zeta_5,q) 
	=
		\aqprod{q^{50}}{q^{50}}{\infty}
		\left(	
			\frac{\jacprod{q^{25}}{q^{50}}}{\jacprod{q^{5}}{q^{25}}}
			+2(\zeta_5+\zeta_5^{-1})q^5\frac{\jacprod{q^{5}}{q^{50}}}{\jacprod{q^{10}}{q^{25}}}	
			-2q\frac{\jacprod{q^{15}}{q^{50}}}{\jacprod{q^{5}}{q^{25}}}
			\right.\nonumber\\&\quad
			+(\zeta_5+\zeta_5^{-1})q\frac{\jacprod{q^{25}}{q^{50}}}{\jacprod{q^{10}}{q^{25}}}	
			-2(\zeta_5+\zeta_5^{-1})q^2\frac{\jacprod{q^{15}}{q^{50}}}{\jacprod{q^{10}}{q^{25}}}	
		%	\nonumber\\&\quad
		\left.
			+2q^4\frac{\jacprod{q^{5}}{q^{50}}}{\jacprod{q^{5}}{q^{25}}}
		\right)
	,\\
	\label{ThreeDissectionOverpartionCrank}
	&\aqprod{-q}{q}{\infty}C(\zeta_3,q)
	=
		\frac{\aqprod{q^9}{q^9}{\infty}^4\aqprod{q^6}{q^6}{\infty}}
			{\aqprod{q^3}{q^3}{\infty}^2\aqprod{q^{18}}{q^{18}}{\infty}^2}
		-
		q\frac{\aqprod{q^{18}}{q^{18}}{\infty}\aqprod{q^9}{q^9}{\infty}}
			{\aqprod{q^3}{q^3}{\infty}}
	%	\nonumber\\&\quad
		-
		2q^2\frac{\aqprod{q^{18}}{q^{18}}{\infty}^4}
			{\aqprod{q^9}{q^9}{\infty}^2\aqprod{q^6}{q^6}{\infty}}
.
\end{align}
\end{proposition}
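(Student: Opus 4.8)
The plan is to establish the six identities of Proposition~\ref{PropositionRankCrankDissections} individually, separating the three crank-type identities \eqref{EquationCrank5Dissection}, \eqref{TheoremNewCrankDissection}, \eqref{ThreeDissectionOverpartionCrank}, which involve only infinite products, from the three rank-type identities \eqref{EquationRank5Dissection}, \eqref{ThreeDissectionExtraSeries}, \eqref{TheoremExtraRank3Dissection}, in which a genuine rank forces an Appell--Lerch ``residual'' series to survive. Two tools do most of the work: the Jacobi Triple Product Identity together with Euler's Pentagonal Number Theorem, which supply the $p$-dissections of $\aqprod{q}{q}{\infty}$, $\aqprod{q^2}{q^2}{\infty}$, and $\aqprod{q}{q^2}{\infty}$; and, at a primitive cube root of unity, the elementary reduction $(1-\zeta_3 q^n)(1-\zeta_3^{-1}q^n)=(1-q^{3n})/(1-q^n)$, which turns the defining series of $R_1$ and $R_2$ into honest Lambert series in $q^{3n}$, together with $\aqprod{\zeta_3 q,\zeta_3^{-1}q}{q}{\infty}=\aqprod{q^3}{q^3}{\infty}/\aqprod{q}{q}{\infty}$.

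For the crank-type identities I would argue as follows. Identity \eqref{EquationCrank5Dissection} is the classical $5$-dissection of the crank generating function at a primitive fifth root of unity; it follows from \cite{Garvan1} (it is the $\zeta_5$ analogue of Theorem~5.1 of \cite{Garvan1} already used in Section~4, or can be recovered by combining Lemma~3.9 of \cite{Garvan1} for $1/\aqprod{\zeta_5 q,\zeta_5^{-1}q}{q}{\infty}$ with the $5$-dissection of $\aqprod{q}{q}{\infty}$). For \eqref{TheoremNewCrankDissection} I would multiply \eqref{EquationCrank5Dissection} by the $5$-dissection of $\aqprod{q}{q^2}{\infty}=\aqprod{q}{q}{\infty}/\aqprod{q^2}{q^2}{\infty}$, reindex the resulting double product, and collect the terms of each residue class $q^{5N+k}$, $0\le k\le 4$, simplifying each class by standard theta-quotient identities. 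Identity \eqref{ThreeDissectionOverpartionCrank} is cleaner still: using the cube-root reduction one has $\aqprod{-q}{q}{\infty}C(\zeta_3,q)=\aqprod{q}{q}{\infty}\aqprod{q^2}{q^2}{\infty}/\aqprod{q^3}{q^3}{\infty}$, a single eta quotient, so it suffices to multiply the $3$-dissections of $\aqprod{q}{q}{\infty}$ (Euler) and of $\aqprod{q^2}{q^2}{\infty}$ (JTP) and again collect residue classes.

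For the rank-type identities: \eqref{EquationRank5Dissection} is the classical $5$-dissection of the rank generating function at $\zeta_5$ that underlies the rank explanation of $p(5n+4)\equiv0\pmod{5}$; I would either quote it from \cite{AS} or re-derive it by splitting the Lambert-type series in Watson's form of $R(z,q)$ (recalled above from \cite{Watson}) into residue classes modulo $5$, the two residual terms $\sum_n(-1)^nq^{75n(n+1)/2}/(1-q^{25n+j})$, $j=5,10$, arising precisely from the class in which the factor $1-\zeta_5 q^n$ cannot be cleared. For \eqref{ThreeDissectionExtraSeries} the cube-root reduction gives $R_1(\zeta_3,q)=\tfrac{\aqprod{q^2}{q^2}{\infty}}{\aqprod{q}{q}{\infty}^2}\bigl(1+6\sum_{n\ge1}(-1)^n(q^n-q^{2n})/(1-q^{3n})\bigr)$; expanding the geometric series, regrouping the double sum by the residue of the exponent modulo $3$, and identifying the three pieces as eta quotients yields the claim (compare \cite{GarvanJennings}, where $R_1$ was treated in the same way). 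Finally \eqref{TheoremExtraRank3Dissection} is handled similarly, starting from $R_2(\zeta_3,q)=\tfrac{\aqprod{q^2}{q^2}{\infty}}{\aqprod{q}{q}{\infty}^2}\bigl(1+3\sum_{n\ge1}(-1)^nq^{n^2}(1+q^{2n})(1-q^n)/(1-q^{3n})\bigr)$; here the regrouping does not fully resolve into eta quotients, leaving the residual series $\sum_n(-1)^nq^{9n^2+9n}/(1-q^{9n+3})$ --- consistent with the fact, recorded elsewhere in this section, that $R_2(z,q)$ is essentially the rank of an overpartition and hence genuinely mock.

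The main obstacle is bookkeeping rather than ideas: in \eqref{TheoremNewCrankDissection} one multiplies two finite dissections and must reorganize on the order of two dozen infinite products into five residue classes, which forces several nontrivial theta-quotient identities; and in the rank cases one must cleanly separate the modular eta-quotient part from the Appell--Lerch part when resumming. A uniform fallback is available exactly as in Section~4: after clearing denominators, each claimed identity asserts that an explicit modular function on $\Gamma_1(N)$ with $N\in\{9,18,25,50\}$ vanishes identically, which can be verified by computing the orders of the constituent (generalized) eta quotients at the cusps and applying the valence formula.
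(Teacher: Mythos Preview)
Your overall strategy is sound and would succeed, but for two of the six identities the paper takes noticeably shorter routes than the ones you outline.

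For \eqref{TheoremNewCrankDissection} you propose to multiply the four-term dissection \eqref{EquationCrank5Dissection} of $C(\zeta_5,q)$ by a $5$-dissection of $\aqprod{q}{q^2}{\infty}$. The paper actually mentions this very approach (via R{\o}dseth's $5$-dissection of $\aqprod{-q}{q^2}{\infty}$) and observes that it ``would be a much longer calculation.'' Instead the paper writes
\[
\aqprod{q}{q^2}{\infty}C(\zeta_5,q)=\aqprod{q}{q^2}{\infty}\aqprod{q}{q}{\infty}\cdot\frac{1}{\aqprod{\zeta_5 q,\zeta_5^{-1}q}{q}{\infty}},
\]
dissects the first factor using Gauss $\aqprod{q}{q^2}{\infty}\aqprod{q}{q}{\infty}=\sum_{n}(-1)^n q^{n^2}$ (only three nonzero residue classes, since $n^2\bmod 5\in\{0,1,4\}$), and the second factor using Lemma~3.9 of \cite{Garvan1} (two terms). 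The product then has exactly the six terms appearing in \eqref{TheoremNewCrankDissection} with no further simplification required, whereas your route produces on the order of twenty terms that must be collapsed via theta-quotient identities.

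For \eqref{TheoremExtraRank3Dissection} you propose to substitute $z=\zeta_3$ directly into $R_2$, reduce the denominators via $(1-\zeta_3 q^n)(1-\zeta_3^{-1}q^n)=(1-q^{3n})/(1-q^n)$, and regroup, leaving an Appell--Lerch residual. The paper instead first proves, for \emph{general} $z$, the algebraic identity
\[
R_2(z,q)=\frac{(1-z)(1-z^{-1})+(z+z^{-1})\,\overline{R}(z,q)}{2},
\]
by a short manipulation of the defining series, and then simply substitutes the known $3$-dissection of the overpartition rank $\overline{R}(\zeta_3,q)$ from \cite{LO1}. This explains structurally why the right-hand side of \eqref{TheoremExtraRank3Dissection} is exactly $\tfrac{3-\overline{R}(\zeta_3,q)}{2}$, and avoids any ad hoc regrouping.

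For the remaining four identities your plan matches the paper: \eqref{EquationRank5Dissection}, \eqref{EquationCrank5Dissection}, \eqref{ThreeDissectionExtraSeries}, and \eqref{ThreeDissectionOverpartionCrank} are quoted from \cite{AS}, \cite{Garvan1}, and \cite{GarvanJennings} respectively.
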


We find that Theorem \ref{TheoremAllDissections} follows from 
Propositions \ref{TheoremRankCrankDifferences} and
\ref{PropositionRankCrankDissections}.

\begin{proof}[Proof of Proposition \ref{TheoremRankCrankDifferences}]

We apply Bailey's Lemma, with $\rho_1=z$ and $\rho_2=z^{-1}$,
to each $S_X(z,q)$ with the corresponding 
Bailey pair, the identities then quickly follow.
We write
\begin{align*}
	S_{C1}(z,q)
	&=
	\frac{\aqprod{q}{q^2}{\infty}\aqprod{q}{q}{\infty}}{\aqprod{z,z^{-1}}{q}{\infty}}
	\sum_{n=0}^\infty \frac{\aqprod{z,z^{-1}}{q}{n}q^n}{\aqprod{q}{q^2}{n}\aqprod{q}{q}{n}}
	-	\frac{\aqprod{q}{q^2}{\infty}\aqprod{q}{q}{\infty}}{\aqprod{z,z^{-1}}{q}{\infty}}
	\\
	&=
	\frac{\aqprod{q}{q^2}{\infty}\aqprod{q}{q}{\infty}}{\aqprod{z,z^{-1}}{q}{\infty}}
	\sum_{n=0}^\infty \aqprod{z,z^{-1}}{q}{n}q^n \beta^{C1}_n
	-	\frac{\aqprod{q}{q^2}{\infty}\aqprod{q}{q}{\infty}}{\aqprod{z,z^{-1}}{q}{\infty}}
	\\
	&=
	\frac{1}{(1-z)(1-z^{-1})\aqprod{q^2}{q^2}{\infty}}
	\sum_{n=0}^\infty \frac{(1-z)(1-z^{-1}) q^n \alpha^{C1}_n}{(1-zq^n)(1-z^{-1}q^n)}
	-	
	\frac{\aqprod{q}{q^2}{\infty}\aqprod{q}{q}{\infty}}{\aqprod{z,z^{-1}}{q}{\infty}}
	\\
	&=
	\frac{1}{(1-z)(1-z^{-1})\aqprod{q^2}{q^2}{\infty}}
	\left(
		1
		+
		\sum_{n=1}^\infty \frac{(1-z)(1-z^{-1})(-1)^n q^{3n^2 + n}(1+q^{2n})}{(1-zq^{2n})(1-z^{-1}q^{2n})}
	\right)	
		\\&\quad
	-	
	\frac{\aqprod{q}{q^2}{\infty}\aqprod{q}{q}{\infty}}{\aqprod{z,z^{-1}}{q}{\infty}}
	\\
	&=
	\frac{1}{(1-z)((1-z^{-1}))}\left( 
		R(z,q^2) - 
		\aqprod{q}{q^2}{\infty}C(z,q)
	\right)
.
\end{align*}
This proves (\ref{C1RankCrankDifference}).
The identities for $S_{C5}(z,q)$, $S_{E2}(z,q)$, and $S_{E4}(z,q)$ are proved
in the same fashion by instead using the Bailey pairs
$(\alpha^{C5},\beta^{C5})$, $(\alpha^{E2},\beta^{E2})$, and
$(\alpha^{E4},\beta^{E4})$ respectively.

\end{proof}

\begin{proof}[Proof of Proposition \ref{PropositionRankCrankDissections}]

Equation (\ref{EquationRank5Dissection}) follows from Theorem 4 of \cite{AS},
(\ref{EquationCrank5Dissection}) is (3.8) of \cite{Garvan1},
(\ref{ThreeDissectionExtraSeries}) is Theorem 2.13 of \cite{GarvanJennings},
and (\ref{ThreeDissectionOverpartionCrank}) is Theorem 2.9 of 
\cite{GarvanJennings}.

We first prove (\ref{TheoremNewCrankDissection}).
By Gauss and the Jacobi Triple Product identity we have
\begin{align}
	\aqprod{q}{q^2}{\infty}\aqprod{q}{q}{\infty}
	&=
	\sum_{n=-\infty}^\infty (-1)^n q^{n^2}
	=
	\sum_{k=-2}^2 (-1)^k q^{k^2} \sum_{n=-\infty}^\infty (-1)^n q^{10nk}q^{25n^2}
	\nonumber\\
	&=	
	\aqprod{q^{50}}{q^{50}}{\infty}\sum_{k=-2}^2 (-1)^k q^{k^2} \jacprod{q^{10k+25}}{q^{50}}
	\nonumber\\\label{ProductCrankPart1}
	&=
	\aqprod{q^{50}}{q^{50}}{\infty}
	\left(
		\jacprod{q^{25}}{q^{50}} - 2q\jacprod{q^{15}}{q^{50}}	+2q^4\jacprod{q^5}{q^{50}}	
	\right)
.
\end{align}

By Lemma 3.9 of \cite{Garvan1} we have
\begin{align}\label{ProductCrankPart2}
	\frac{1}{\aqprod{\zeta_5q,\zeta_5^{-1}q}{q}{\infty}}
	&=
	\frac{1}{\jacprod{q^{5}}{q^{25}}} 
	+ (\zeta_5+\zeta_5^{-1})q\frac{1}{\jacprod{q^{10}}{q^{25}}}
.
\end{align}

Multiplying (\ref{ProductCrankPart1}) and (\ref{ProductCrankPart2}) yields
\begin{align*}
	\frac{\aqprod{q}{q^2}{\infty}\aqprod{q}{q}{\infty}}
		{\aqprod{\zeta_5q,\zeta_5^{-1}q}{q}{\infty}}
	&=
	\aqprod{q^{50}}{q^{50}}{\infty}
	\left(	
		\frac{\jacprod{q^{25}}{q^{50}}}{\jacprod{q^{5}}{q^{25}}}
		+2(\zeta_5+\zeta_5^{-1})q^5\frac{\jacprod{q^{5}}{q^{50}}}{\jacprod{q^{10}}{q^{25}}}	
		-2q\frac{\jacprod{q^{15}}{q^{50}}}{\jacprod{q^{5}}{q^{25}}}
		\right.\\&\quad\left.
		+(\zeta_5+\zeta_5^{-1})q\frac{\jacprod{q^{25}}{q^{50}}}{\jacprod{q^{10}}{q^{25}}}	
		-2(\zeta_5+\zeta_5^{-1})q^2\frac{\jacprod{q^{15}}{q^{50}}}{\jacprod{q^{10}}{q^{25}}}	
		%\right.\\&\quad\left.		
		+2q^4\frac{\jacprod{q^{5}}{q^{50}}}{\jacprod{q^{5}}{q^{25}}}
	\right)
,
\end{align*}
which is (\ref{TheoremNewCrankDissection}).

We note we could also use the 5-dissection of R{\o}dseth \cite{Rodseth} for 
$\aqprod{-q}{q^2}{\infty}$ and replace $q$ with $-q$ to get
\begin{align*}
	\aqprod{q}{q^2}{\infty}
	&=
		\frac{\aqprod{q^5}{q^5}{\infty}\aqprod{q^{25}}{q^{25}}{\infty}^2\jacprod{q^{15}}{q^{50}}}
			{\aqprod{q^{10}}{q^{10}}{\infty}^3}
		-
		q\frac{\aqprod{q^5}{q^5}{\infty}\aqprod{q^{50}}{q^{50}}{\infty}^2\jacprod{q^{15}}{q^{50}}^2}
			{\aqprod{q^{10}}{q^{10}}{\infty}^3}
		\\&\quad
		-
		q^7\frac{\aqprod{q^5}{q^5}{\infty}\aqprod{q^{50}}{q^{50}}{\infty}^2\jacprod{q^{5}}{q^{50}}^2}
			{\aqprod{q^{10}}{q^{10}}{\infty}^3}
		-
		q^3\frac{\aqprod{q^5}{q^5}{\infty}\aqprod{q^{25}}{q^{25}}{\infty}^2\jacprod{q^{5}}{q^{50}}}
			{\aqprod{q^{10}}{q^{10}}{\infty}^3}
		\\&\quad
		+
		q^4\frac{\aqprod{q^5}{q^5}{\infty}^2\aqprod{q^{50}}{q^{50}}{\infty}^3}
			{\aqprod{q^{10}}{q^{10}}{\infty}^4\aqprod{q^{25}}{q^{25}}{\infty}}
.
\end{align*}
However multiplying this with the 5-dissection for $C(\zeta_5,q)$ would be a
much longer calculation.

For (\ref{TheoremExtraRank3Dissection})
we recall $\overline{R}(z,q)$ is the generating function for the Dyson rank of 
an overpartition. By \cite{Lovejoy1} the generating function for the 
Dyson rank of an overpartition is given by
\begin{align*}
	\overline{R}(z,q) &=
	\frac{\aqprod{-q}{q}{\infty}}{\aqprod{q}{q}{\infty}}
	\left(1+
		2\sum_{n=1}^\infty \frac{(1-z)(1-z^{-1})(-1)^n q^{n^2+n}}{(1-zq^n)(1-zq^n)}
	\right).
\end{align*}

By Theorem 1.1 of \cite{LO1} we have
\begin{align*}
	\overline{R}(\zeta_3,q)
	&=
		\frac{\aqprod{q^9}{q^9}{\infty}^4\aqprod{q^6}{q^6}{\infty}}
			{\aqprod{q^{18}}{q^{18}}{\infty}^2\aqprod{q^3}{q^3}{\infty}^2}
		+
		2q\frac{\aqprod{q^9}{q^9}{\infty}\aqprod{q^{18}}{q^{18}}{\infty}}
			{\aqprod{q^3}{q^3}{\infty}}
		+
		4q^2\frac{\aqprod{q^{18}}{q^{18}}{\infty}^4}
			{\aqprod{q^9}{q^9}{\infty}^2\aqprod{q^6}{q^6}{\infty}}
		\\&\quad
		-
		6q^2\frac{\aqprod{q^{18}}{q^{18}}{\infty}}{\aqprod{q^9}{q^9}{\infty}^2}
		\sum_{n=-\infty}^\infty\frac{(-1)^nq^{9n^2+9n}}{1-q^{9n+3}}
.
\end{align*} 

We note the expression on the right hand side of the identity in
(\ref{TheoremExtraRank3Dissection})
is $\frac{3 - \overline{R}(\zeta_3,q)}{2}$. 
We will show that
\begin{align*}
	\frac{(1-z)(1-z^{-1}) + (z+z^{-1})\overline{R}(z,q)}{2}
	&= R_2(z,q)
.
\end{align*}

We have
\begin{align*}
	&\frac{(1-z)(1-z^{-1}) + (z+z^{-1})\overline{R}(z,q)}{2}
	\\
	&=
	\frac{\aqprod{-q}{q}{\infty}}{\aqprod{q}{q}{\infty}}
	\left(
		\frac{z+z^{-1}}{2} 
		+ \frac{(1-z)(1-z^{-1})}{2}\frac{\aqprod{q}{q}{\infty}}{\aqprod{-q}{q}{\infty}}
		%\right.\\&\quad\left.
		+ (z+z^{-1})\sum_{n=1}^\infty\frac{(1-z)(1-z^{-1})(-1)^n q^{n^2+n}}
			{(1-zq^n)(1-z^{-1}q^n)}
	\right)
	\\
	&=
	\frac{\aqprod{-q}{q}{\infty}}{\aqprod{q}{q}{\infty}}
	\left(
		1 
		+ \sum_{n=1}^\infty(1-z)(1-z^{-1})(-1)^n q^{n^2}
		%\right.\\&\quad\left.
		+ (z+z^{-1})\sum_{n=1}^\infty\frac{(1-z)(1-z^{-1})(-1)^n q^{n^2+n}}
			{(1-zq^n)(1-z^{-1}q^n)}
	\right)
	\\
	&=
	\frac{\aqprod{-q}{q}{\infty}}{\aqprod{q}{q}{\infty}}
	\left(
		1 
		+ \sum_{n=1}^\infty (1-z)(1-z^{-1})(-1)^n q^{n^2}
			\left(
				1+\frac{(z+z^{-1})q^n}{(1-zq^n)(1-z^{-1}q^n)}
			\right)
	\right)
	\\
	&=
	\frac{\aqprod{-q}{q}{\infty}}{\aqprod{q}{q}{\infty}}
	\left(
		1 
		+ 
		\sum_{n=1}^\infty \frac{(1-z)(1-z^{-1})(-1)^n q^{n^2}(1+q^{2n})}
			{(1-zq^n)(1-z^{-1}q^n)}
	\right)
	\\
	&=
	R_2(z,q)
	.
\end{align*}
This proves (\ref{TheoremExtraRank3Dissection}), noting
$(1-\zeta_3)(1-\zeta_3^{-1}) = 3$ and $\zeta_3+\zeta_3^{-1}=-1$.
\end{proof}

We now have a second proof of the identities in Section 4 for 
$M_{C5}$, $M_{E2}$, and $M_{E4}$, as well as the only proof of the 
corresponding identity for $M_{C1}$.

\begin{corollary}
For $n\ge 0$,
\begin{align*}
	M_{C1}(0,5,5n+3)
	&=	
	M_{C1}(1,5,5n+3)
	=
	M_{C1}(2,5,5n+3)
	=
	M_{C1}(3,5,5n+3)
	=
	M_{C1}(4,5,5n+3)
	\\&
	=
	\frac{1}{5}\sptA{C1}{5n+3}
.
\end{align*}
\end{corollary}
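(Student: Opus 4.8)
The plan is to derive the stated equalities from the five-dissection of $S_{C1}(\zeta_5,q)$ recorded in Theorem~\ref{TheoremAllDissections}, which is itself a consequence of Propositions~\ref{TheoremRankCrankDifferences} and~\ref{PropositionRankCrankDissections}. As explained in the Introduction, it suffices to prove that $[q^{5n+3}]S_{C1}(\zeta_5,q)=0$ for every $n\ge 0$: since $5$ is prime and $\zeta_5$ is a primitive fifth root of unity, the vanishing of $\sum_{k=0}^{4}M_{C1}(k,5,5n+3)\zeta_5^{k}=[q^{5n+3}]S_{C1}(\zeta_5,q)$ forces $M_{C1}(0,5,5n+3)=\dots=M_{C1}(4,5,5n+3)$, and each of these then equals $\tfrac15\sptA{C1}{5n+3}$ because $\sptA{C1}{5n+3}=\sum_{k=0}^{4}M_{C1}(k,5,5n+3)$.

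So the entire proof reduces to inspecting the right-hand side of the formula for $(1-\zeta_5)(1-\zeta_5^{-1})S_{C1}(\zeta_5,q)$ in Theorem~\ref{TheoremAllDissections} and checking that no power $q^{5n+3}$ occurs in it. First I would observe that every infinite product appearing there is assembled from $\aqprod{q^{50}}{q^{50}}{\infty}$ and from Jacobi products $\jacprod{q^{a}}{q^{b}}$ in which both $a$ and $b$ are divisible by $5$, so each such factor, and hence each such product, is a power series in $q^{5}$; similarly the two auxiliary sums $\sum_{n}(-1)^{n}q^{75n(n+1)}/(1-q^{50n+10})$ and $\sum_{n}(-1)^{n}q^{75n(n+1)}/(1-q^{50n+20})$ involve only exponents divisible by $5$, since $75n(n+1)$ is a multiple of $5$ and the geometric expansions of $1/(1-q^{50n+10})$ and $1/(1-q^{50n+20})$ contribute only multiples of $10$. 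Consequently each of the twelve summands in the dissection is a fixed power $q^{c}$ times a series supported on exponents $\equiv 0\pmod 5$.

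It then remains to read off the residues $c\bmod 5$ over the summands. Going through the formula term by term, the powers of $q$ multiplying the various terms are $q^{0}$, $q^{5}$, $q^{10}$ (so $c\equiv 0$), $q$, $q^{6}$, $q^{16}$ (so $c\equiv 1$), $q^{2}$ (so $c\equiv 2$), and $q^{4}$ (so $c\equiv 4$). No term has $c\equiv 3\pmod 5$, hence $[q^{5n+3}]\bigl((1-\zeta_5)(1-\zeta_5^{-1})S_{C1}(\zeta_5,q)\bigr)=0$, and dividing by the nonzero scalar $(1-\zeta_5)(1-\zeta_5^{-1})$ gives $[q^{5n+3}]S_{C1}(\zeta_5,q)=0$ for all $n\ge 0$, which completes the argument.

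The main obstacle is exactly the bookkeeping in the two preceding paragraphs: one must verify carefully that \emph{every} eta quotient and auxiliary series in the roughly twelve-term dissection of $S_{C1}(\zeta_5,q)$ genuinely is a power series in $q^{5}$, and that the power of $q$ attached to it has been identified correctly, since a single overlooked term with shift $c\equiv 3\pmod 5$ would destroy the conclusion. Given the explicit formula of Theorem~\ref{TheoremAllDissections} this check is routine. One could instead bypass Theorem~\ref{TheoremAllDissections} and argue directly from (\ref{C1RankCrankDifference}) together with the five-dissections of $R(\zeta_5,q)$ and $\aqprod{q}{q^2}{\infty}C(\zeta_5,q)$ in Proposition~\ref{PropositionRankCrankDissections} (evaluating the rank dissection at $q^{2}$), but the verification one is left with is essentially the same.
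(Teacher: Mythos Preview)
Your proposal is correct and follows exactly the route the paper intends: the corollary is deduced from the five-dissection of $S_{C1}(\zeta_5,q)$ in Theorem~\ref{TheoremAllDissections} by observing that none of the twelve summands contributes a $q^{5n+3}$ term, and then invoking the general argument from the Introduction that $[q^{N}]S_X(\zeta_t,q)=0$ forces all $M_X(k,t,N)$ to coincide. Your bookkeeping of the residues of the prefactor powers and the verification that every product and auxiliary series lives in $\mathbb{Z}[[q^5]]$ is precisely the check the paper leaves implicit when it says ``$S_{C1}(\zeta_5,q)$ has no $q^{5n+3}$ terms.''
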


\section{Conclusions}

We see spt-crank-type functions lead to a large number of new spt-type
functions, as well as congruences for these functions. We see some of these
spt-crank-type functions have surprising product representations, while others
have interesting Hecke-Rogers-type double series representations. Also in all 
of our cases
these functions arise as the difference of a rank-like and crank-like function.
While these results are quite different, they both come out of Bailey pairs
and Bailey's Lemma.

As noted in Section 2, here we do not pursue interpreting the $M_{X}(m,n)$ as
a statistic defined on the smallest parts that each function
$\sptA{X}{n}$ counts. However there is reason to believe such an interpretation
exists. By
\cite{JS} we know each of $M_{A1}(m,n)$, $M_{A3}(m,n)$, $M_{A5}(m,n)$, and
$M_{A7}(m,n)$ is nonnegative. Also by expanding the summands of $S_{E4}(z,q)$
by the $q$-binomial theorem,
\begin{align*}
	\frac{q^{2n}\aqprod{q^{2n+2}}{q^2}{\infty}}
		{\aqprod{zq^n,z^{-1}q^n}{q}{\infty}}
	&=
	\frac{q^{2n}\aqprod{q^{2n}}{q}{\infty}}
		{(1-q^{2n})\aqprod{q^{2n+1}}{q^2}{\infty}\aqprod{zq^n,z^{-1}q^n}{q}{\infty}}
	\\
	&=
	\frac{q^{2n}}{(1-q^{2n})\aqprod{q^{2n+1}}{q^2}{\infty}}
		\sum_{k=0}^\infty 
		\frac{z^{-k}q^{nk}}{\aqprod{q}{q}{k}\aqprod{zq^{n+k}}{q}{\infty}}
,
\end{align*}
we see that each $M_{E4}(m,n)$ is nonnegative. Numerical evidence
suggests that also each $M_{C1}(m,n)$ and $M_{C5}(m,n)$ is nonnegative. 
We pose the problem of finding nice combinatorial interpretations of the 
coefficients 
$M_{C1}(m,n)$ and $M_{C5}(m,n)$ which prove nonnegativity.

In a coming paper, the second author continues this study with Bailey pairs from groups
$B$, $F$, $G$, and $J$ of \cite{Slater} and \cite{Slater2}. This will cover the last of
the Bailey pairs of Slater that appear to give congruences of this form via
spt-crank-type functions. Of course that is not to say there are not many more
spt-crank-type functions to uncover
or that the spt-crank-type functions corresponding to the other Bailey pairs of
Slater are not also of interest.

\bibliographystyle{abbrv}
\bibliography{ranksRef}

\end{document}